 \theoremstyle{plain}
 \newtheorem{thm}{\bfseries Theorem}[section]
 \newtheorem{theorem}{\bfseries Theorem}
 \newtheorem{prop}[thm]{\bfseries Proposition}
 \newtheorem{question}{\bfseries Question}[section]
 \newtheorem{problem}{\bfseries Problem}[section]
 \newtheorem{lem}[thm]{\bfseries Lemma}
 \newtheorem{cor}[thm]{\bfseries Corollary}
 \newtheorem{dfn}[thm]{\bfseries Definition}
 \theoremstyle{remark}
 \newtheorem{rem}[thm]{Remark}
 \numberwithin{equation}{thm}
 \theoremstyle{lemmastyle}
 \newtheorem*{lem*}{Lemma}
  \renewcommand{\leq}{\leqslant}
\renewcommand{\geq}{\geqslant}
\newcommand{\spa}{\overset{{\scriptscriptstyle \mathrm{sp}}}{\curvearrowright}}
\title[Poissonian Actions of Polish Groups]{Poissonian Actions of Polish Groups}
\subjclass[2020]{22D40, 22F10, 28D15, 60G55}
\keywords{Poisson point process, Poisson suspension, Poissonian action}
\author[Avraham-Re'em \& Roy]{\bfseries Nachi Avraham-Re'em \& Emmanuel Roy}
\address{Department of Mathematical Sciences, Chalmers University of Technology and the University of Gothenburg, Gothenburg, Sweden}
\email{nachman@chalmers.se}
\address{Laboratoire Analyse Géométrie et Applications, Université Paris 13, Institut Galilée, 99 avenue Jean-Baptiste Clément, 93430 Villetaneuse, France}
\email{roy@math.univ-paris13.fr}
\thanks{The research of N. A-R was supported by ISF (grant No. 1180/22) and the Knut and Alice Wallenberg Foundation (KAW 2021.0258).}
\begin{document}

\begin{abstract}
We define and study Poissonian actions of Polish groups as a framework to Poisson suspensions, characterize them spectrally, and provide a complete characterization of their ergodicity. We further construct {\it spatial} Poissonian actions, answering partially a question of Glasner, Tsirelson \& Weiss about L\'{e}vy groups. We also construct for every diffeomorphism group an ergodic free spatial probability preserving action. This constitutes a new class of Polish groups admitting non-essentially countable orbit equivalence relations, obtaining progress on a problem of Kechris.
\end{abstract}

\maketitle

\tableofcontents

\section{Introduction}

A well-known construction in probability theory is the Poisson point process, in which every standard (typically infinite) measure space $\left(X,\mathcal{B},\mu\right)$ is associated a standard probability space $\left(X^{\ast},\mathcal{B}^{\ast},\mu^{\ast}\right)$, whose points are configurations of points from $X$ and their distribution is governed by Poisson distribution with intensity $\mu$. In ergodic theory, one naturally associates with every measure preserving transformation $T$ of $\left(X,\mathcal{B},\mu\right)$ a probability preserving transformation $T^{\ast}$ of $\left(X^{\ast},\mathcal{B}^{\ast},\mu^{\ast}\right)$, namely the {\bf Poisson suspension}. In the first part of this work we aim to put the constructions of Poisson point process and the Poisson suspension in a general, more axiomatic framework, thus defining the notion of measure preserving {\bf Poissonian action} of Polish groups.

For a parameter $0\leq\alpha\leq\infty$, denote by $\mathrm{Poiss}\left(\alpha\right)$ the Poisson distribution with mean $\alpha$, with the convention that for $\alpha\in\left\{0,\infty\right\}$ it is the distribution of the constant $\alpha$.

\begin{dfn}[Poisson point process]
\label{Definition: Poisson point process}
Let $\left(X,\mathcal{B},\mu\right)$ be a standard measure space and $\left(\Omega,\mathcal{F},\mathbb{P}\right)$ be a standard probability space. A collection
$$\mathcal{P}=\left\{P_{A}:A\in\mathcal{B}\right\}$$
of random variables that are defined on $\left(\Omega,\mathcal{F},\mathbb{P}\right)$ is called a {\bf Poisson point process} with the {\bf base space} $\left(X,\mathcal{B},\mu\right)$, if the following properties hold:
\begin{enumerate}
\item $P_{A}$ has distribution $\mathrm{Poiss}\left(\mu\left(A\right)\right)$ for every $A\in\mathcal{B}$.
\item $P_{A\cup B}=P_{A}+P_{B}$ $\mathbb{P}$-a.s. whenever $A,B\in\mathcal{B}$ are disjoint.
\end{enumerate}
Such a Poisson point process $\mathcal{P}$ will be called {\bf generative} if, in addition,
\begin{enumerate}
    \setcounter{enumi}{2}
    \item The members of $\mathcal{P}$ generate $\mathcal{F}$ modulo $\mathbb{P}$.
\end{enumerate}
\end{dfn}

\begin{rem}
    By the famous R\'{e}nyi Theorem, which is valid in our general setting, a Poisson point process $\mathcal{P}$ as in Definition \ref{Definition: Poisson point process} automatically satisfies that $P_{A_{1}},\dotsc,P_{A_{n}}$ are independent whenever $A_{1},\dotsc,A_{n}\in\mathcal{B}$ are disjoint.
\end{rem}

The measure $\mu$ is sometimes referred to as the {\bf intensity} of $\mathcal{P}$. The classical (generative) Poisson point process with an arbitrary base space $\left(X,\mathcal{B},\mu\right)$, is usually constructed on a standard probability space $\left(X^{\ast},\mathcal{B}^{\ast},\mu^{\ast}\right)$, where $X^{\ast}$ consists of Borel simple counting measures on $X$, and it is the collection 
$$\mathcal{N}=\left\{N_{A}:A\in\mathcal{B}\right\}\text{ given by }N_{A}\left(\omega\right)=\omega\left(A\right).$$
As we shall see in Proposition \ref{Proposition: classical Poisson point process}, this construction of Poisson point process amounts to a choice of topology which is not always canonical, and $\mathcal{N}$ in its $A$-variable becomes a {\it random measure} on $X$, a property that is not assumed a priori for general Poisson point process as in Definition \ref{Definition: Poisson point process}. Nevertheless, as we shall see in Proposition \ref{Proposition: Poisson point process}, all Poisson point processes are essentially unique, and in particular all form random measures in a precise sense. Despite this universality of the Poisson point process, the ability to deviate oneself from the aforementioned concrete construction will be of great importance to us as we shall see in Theorems \ref{Theorem: Poisson random set} and \ref{Theorem: spatial Poisson suspension}.

Given a Poisson point process $\mathcal{P}$ as in Definition \ref{Definition: Poisson point process}, let $\mathcal{B}_{\mu}$ be the Borel sets in $\mathcal{B}$ with finite measure, and look at the real Hilbert space
$$H\left(\mathcal{P}\right):=\overline{\mathrm{span}}\left\{ P_{A}:A\in\mathcal{B}_{\mu}\right\}\subset L_{\mathbb{R}}^{2}\left(\Omega,\mathcal{F},\mathbb{P}\right).$$

\begin{dfn}[Poissonian action]
\label{Definition: Poissonian action}
Let $\mathcal{P}$ be a generative Poisson point process as in Definition \ref{Definition: Poisson point process}. A probability preserving action $\mathbf{S}:G\curvearrowright\left(\Omega,\mathcal{F},\mathbb{P}\right)$ of a Polish group $G$ is said to be a {\bf Poissonian action} with respect to $\mathcal{P}$, if its Koopman representation preserves $H\left(\mathcal{P}\right)$ within $L_{\mathbb{R}}^{2}\left(\Omega,\mathcal{F},\mathbb{P}\right)$.
\end{dfn}

In the next theorem we provide a characterization of Poissonian actions. We will start by introducing the natural source for Poissonian actions, namely the {\bf Poisson suspension} construction, omitting essential technical details that will be treated with great care in Proposition \ref{Proposition: classical Poisson point process}. Observe that if $T$ is a measure preserving transformation of $\left(X,\mathcal{B},\mu\right)$, one may define a probability preserving transformation $T^{\ast}$ of $\left(X^{\ast},\mathcal{B}^{\ast},\mu^{\ast}\right)$ by the property that for every $\omega$ in an appropriate $\mu^{\ast}$-conull set,
$$T^{\ast}\left(\omega\right)=\omega\circ T^{-1}.$$
Evidently, we have the property
$$N_{A}\left(T^{\ast}\left(\omega\right)\right)=N_{T^{-1}\left(A\right)}\left(\omega\right)\text{ for }A\in\mathcal{B}\text{ and for }\mu^{\ast}\text{-a.e. }\omega\in X^{\ast}.$$
This readily implies that $T^{\ast}$ is a Poissonian transformation with respect to the Poisson point process $\mathcal{N}$. As we shall see later on, this source of Poissonian actions is not limited to a single transformation but for every measure preserving action $\mathbf{T}:G\curvearrowright\left(X,\mathcal{B},\mu\right)$ of an arbitrary Polish group $G$ we obtain a Poissonian action $\mathbf{T}^{\ast}:G\curvearrowright\left(X^{\ast},\mathcal{B}^{\ast},\mu^{\ast}\right)$ with respect to the Poisson point process $\mathcal{N}$. In the Poisson suspension construction, the action $\mathbf{T}$ is referred to as a {\bf base action} of $\mathbf{T}^{\ast}$, and for a general Poissonian action this will be put as follows.

\begin{dfn}[Base of a Poissonian action]
\label{Definition: base action}
Let $\mathbf{S}:G\curvearrowright\left(\Omega,\mathcal{F},\mathbb{P}\right)$ be a Poissonian action of a Polish group $G$ with respect to a generative Poisson point process $\mathcal{P}$ as in Definition \ref{Definition: Poissonian action}. An action $\mathbf{T}:G\curvearrowright\left(X,\mathcal{B},\mu\right)$ is called a {\bf base action} for $\mathbf{S}$ if
$$P_{A}\circ S_{g}=P_{T_{g}^{-1}\left(A\right)}\quad\mathbb{P}\text{-a.e. for every }g\in G\text{ and }A\in\mathcal{B}.$$
\end{dfn}

The following theorem is our main result about Poissonian actions. We put it in a principled form so to make things clear and the precise formulations can be found in Theorems \ref{Theorem: Poissonian actions I} and \ref{Theorem: Poissonian actions II} and Corollary \ref{Corollary: Poissonian actions II}.

\begin{theorem}
\label{Theorem: Poissonian actions}
Suppose $\mathcal{P}$ is a generative Poisson point process as in Definition \ref{Definition: Poisson point process}, and that a Polish group -- the acting group -- is given. Then:
\begin{enumerate}
    \item Every measure preserving action on the base space of $\mathcal{P}$ is a base action of an essentially unique Poissonian action with respect to $\mathcal{P}$.
    \item Every Poissonian action with respect to $\mathcal{P}$ admits an essentially unique base action on the base space of $\mathcal{P}$.
\end{enumerate}
\end{theorem}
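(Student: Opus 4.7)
The two statements are natural duals --- (1) lifts a base action to the Poisson side and (2) descends a Poissonian action to the base --- and my plan for both is to reduce to the canonical model $(X^{\ast},\mathcal{B}^{\ast},\mu^{\ast},\mathcal{N})$ via the essential uniqueness of generative Poisson point processes (Proposition \ref{Proposition: Poisson point process}), and then to argue concretely there.

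For (1), after identifying $(\Omega,\mathcal{F},\mathbb{P},\mathcal{P})$ with $(X^{\ast},\mathcal{B}^{\ast},\mu^{\ast},\mathcal{N})$, set $S_g(\omega):=\omega\circ T_g^{-1}$. Classical Poisson suspension arguments (forthcoming Proposition \ref{Proposition: classical Poisson point process}) show that $S_g$ is a well defined $\mu^{\ast}$-preserving transformation satisfying $N_A\circ S_g = N_{T_g^{-1}A}$, so $\mathbf{S}$ is Poissonian with base $\mathbf{T}$. The one delicate issue is the joint Borel measurability of $(g,\omega)\mapsto S_g\omega$ for a general Polish $G$; I would reduce this to the measurability of $(g,\omega)\mapsto \omega(T_g^{-1}A_n)$ for $\{A_n\}$ a countable generating subring of $\mathcal{B}_{\mu}$, which follows from the Borel structure of the base action. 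Essential uniqueness is then immediate: two Poissonian actions sharing the same base agree on $\mathcal{P}$, hence on $\mathcal{F}$ by generativity.

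The crux of the theorem is (2). For a Poissonian $\mathbf{S}$ and $g\in G$, the family $\{N_A\circ S_g:A\in\mathcal{B}\}$ still satisfies (1)--(2) of Definition \ref{Definition: Poisson point process} and lies entirely in $H(\mathcal{N})$, so it is a Poisson point process of intensity $\mu$ sitting inside the first chaos. The main obstacle is the following \textbf{rigidity lemma}: every $f\in H(\mathcal{N})$ that is $\mathbb{N}_0$-valued and $\mathrm{Poiss}(\lambda)$-distributed is of the form $f=N_B$ for some $B\in\mathcal{B}_{\mu}$ with $\mu(B)=\lambda$. The proof would write $f = \lambda + W(h)$ with $h\in L^2(X,\mu)$ the first-chaos representative of $f-\mathbb{E}[f]$, compute
\[
\mathbb{E}\!\left[e^{itW(h)}\right] = \exp\!\left(\int_X\bigl(e^{ith(x)}-1-ith(x)\bigr)\,d\mu(x)\right),
\]
and match it against the characteristic function of $\mathrm{Poiss}(\lambda)-\lambda$; the uniqueness of the L\'evy measure in the L\'evy--Khintchine representation then forces the pushforward $h_{\ast}\mu$ to equal $\lambda\delta_1$ off the origin, i.e.\ $h=\mathbb{1}_B$ with $\mu(B)=\lambda$.

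Granted the rigidity lemma, the identity $N_A\circ S_g = N_{\Psi_g A}$ defines a map $\Psi_g:\mathcal{B}_{\mu}\to\mathcal{B}_{\mu}$ unambiguously modulo $\mu$-null sets, since $N_B=N_{B'}$ a.s.\ gives $\mathbb{P}(N_{B\triangle B'}=0)=e^{-\mu(B\triangle B')}=1$ and hence $\mu(B\triangle B')=0$. Disjoint additivity $N_{A_1\sqcup A_2}=N_{A_1}+N_{A_2}$ together with intensity preservation make $\Psi_g$ a $\mu$-preserving Boolean $\sigma$-automorphism of $\mathcal{B}_{\mu}$, realized by an essentially unique measure preserving point transformation $T_g^{-1}$ via standard-Borel point realization. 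The relation $S_{gh}=S_gS_h$ translates to $\Psi_{gh}=\Psi_h\Psi_g$, yielding a genuine base action $\mathbf{T}$; measurability of $g\mapsto T_g$ and the essential uniqueness of the base action follow from the uniqueness part of the realization together with the measurability of $\mathbf{S}$.
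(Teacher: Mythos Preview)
Your treatment of (1) is essentially the paper's, modulo packaging: rather than arguing joint measurability of $(g,\omega)\mapsto\omega\circ T_g^{-1}$ by hand, the paper composes the given Boolean action $G\to\mathrm{Aut}(X,\mathcal{B},\mu)$ with the continuous embedding $\mathrm{Aut}(X,\mathcal{B},\mu)\hookrightarrow\mathrm{Aut}(X^{\ast},\mathcal{B}^{\ast},\mu^{\ast})$ of Proposition~\ref{Proposition: classical Poisson point process}, so continuity is automatic and the near-action issues you flag never arise.

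For (2) your route is correct but genuinely different. Both arguments ultimately rest on L\'evy--Khintchine uniqueness, but the paper works at the operator level rather than set-by-set: the Koopman representation of $\mathbf{S}$ restricted to $H(\mathcal{P})$ is a unital positivity preserving unitary, and conjugating by the first-chaos isometry $I_\mu$ produces a unitary $U^\mu$ on $L^2_{\mathbb{R}}(X,\mu)$; a lemma (Lemma~\ref{Lemma: positivity preserving unitaries}, proved via \textsc{id}p random variables and L\'evy--Khintchine, in the same spirit as your rigidity lemma) shows $U^\mu$ is quasi-unital and positivity preserving, whereupon Banach--Lamperti identifies it as a Koopman operator $f\mapsto f\circ T_g^{-1}$. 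Your rigidity lemma instead pins down each $N_A\circ S_g$ individually as some $N_B$ and then invokes point realization of the resulting Boolean automorphism. Your argument is more elementary---it avoids Banach--Lamperti and stays close to the combinatorics of the point process---while the paper's operator framework buys extra generality (its Theorem~\ref{Theorem: Poissonian actions II} covers \emph{all} unital positivity preserving unitary representations on $H(\mathcal{P})$, not only Koopman ones) and makes continuity of $g\mapsto T_g$ immediate, since every arrow in the chain $G\to\mathrm{U}(H(\mathcal{P}))\to\mathrm{U}(L^2_{\mathbb{R}}(X,\mu))\to\mathrm{Aut}(X,\mu)$ is a continuous homomorphism.
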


In the next we completely characterize the ergodicity of Poissonian actions in terms of their base actions, to the generality of measure preserving actions of Polish groups. For $G=\mathbb{Z}$ it was proved by Marchat \cite{marchat1978} and other proofs were given later by Grabinsky \cite[Theorem 1]{grabinsky1984poisson} and Roy \cite[$\mathsection$4.5]{roy2007ergodic} (see also Remark \ref{Remark: ergodicity} below).

\begin{theorem}
\label{Theorem: ergodicity}
Suppose $\mathbf{S}$ is a Poissonian action with a base action $\mathbf{T}$. The following are equivalent.
\begin{enumerate}
    \item $\mathbf{S}$ is ergodic.
    \item $\mathbf{S}$ is weakly mixing.
    \item $\mathbf{T}$ admits no invariant set of a positive finite measure. 
\end{enumerate}
\end{theorem}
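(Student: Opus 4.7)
The plan is to run the cycle $(2)\Rightarrow(1)\Rightarrow(3)\Rightarrow(1)\Rightarrow(2)$, where the equivalence $(1)\Leftrightarrow(3)$ is the core of the matter and $(1)\Rightarrow(2)$ will follow by a superposition trick. The easy direction $(1)\Rightarrow(3)$ is by contraposition: if $A\in\mathcal{B}$ is $\mathbf{T}$-invariant with $0<\mu(A)<\infty$, then Definition~\ref{Definition: base action} gives $P_A\circ S_g = P_{T_g^{-1}(A)} = P_A$, so the non-constant $\mathrm{Poiss}(\mu(A))$-random variable $P_A$ is $\mathbf{S}$-invariant, hence $\mathbf{S}$ is not ergodic.

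For $(3)\Rightarrow(1)$, I would invoke the Fock/chaos decomposition afforded by the spectral portion of Theorem~\ref{Theorem: Poissonian actions}: the Koopman representation of $\mathbf{S}$ on $L^{2}_{0}(\Omega,\mathbb{P})$ decomposes as $\bigoplus_{n\geq 1}\mathcal{H}_{n}$, with $\mathcal{H}_{n}$ canonically identified with the symmetric subspace of $L^{2}(X^{n},\mu^{n})$ on which the Koopman acts as the restriction of $U_{\mathbf{T}}^{\otimes n}$. Any $\mathbf{S}$-invariant $f\in L^{2}_{0}$ projects onto $\mathbf{T}^{\otimes n}$-invariant $f_{n}\in L^{2}(X^{n},\mu^{n})$ for each $n\geq 1$, and it therefore suffices to prove the following key lemma: \emph{if $\mathbf{T}$ on $(X,\mu)$ has no invariant set of positive finite measure, then neither does $\mathbf{T}^{\otimes n}$ on $(X^{n},\mu^{n})$ for any $n\geq 1$.} Granting the lemma, the sublevel sets $\{|f_{n}|^{2}>c\}$ would otherwise be such invariant sets for $c>0$ small, forcing $f_{n}=0$ and hence $f$ constant.

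The lemma itself reduces to one application of Fubini. Let $E\subset X^{n}$ be $\mathbf{T}^{\otimes n}$-invariant with $\mu^{n}(E)<\infty$ and set $\phi(y):=\mu^{n-1}\{x\in X^{n-1}:(x,y)\in E\}$. The joint $\mathbf{T}^{\otimes n}$-invariance of $E$ combined with the $\mathbf{T}^{\otimes(n-1)}$-invariance of $\mu^{n-1}$ gives $\phi\circ T_{g}=\phi$ $\mu$-a.e.\ for every $g\in G$, while Fubini yields $\int\phi\,d\mu=\mu^{n}(E)<\infty$. Were $\phi$ not $\mu$-a.e.\ zero, Markov's inequality would furnish $c>0$ with $0<\mu\{\phi>c\}<\infty$, contradicting hypothesis $(3)$; hence $\mu^{n}(E)=0$.

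Finally, $(1)\Rightarrow(2)$ comes from the Poisson superposition principle: the product $\mathbf{S}\times\mathbf{S}$ is itself a Poissonian action, whose base is the disjoint union $\mathbf{T}\sqcup\mathbf{T}$ on $(X\sqcup X,\mu\sqcup\mu)$; existence and uniqueness of this base are furnished by Theorem~\ref{Theorem: Poissonian actions}. Since $\mathbf{T}\sqcup\mathbf{T}$ admits an invariant set of positive finite measure exactly when $\mathbf{T}$ does, the equivalence $(1)\Leftrightarrow(3)$ already proved applies to $\mathbf{S}\times\mathbf{S}$ too, yielding its ergodicity, i.e., the weak mixing of $\mathbf{S}$. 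The main subtlety I expect is confirming that the Fock/chaos decomposition and the superposition identification genuinely hold in the demanded generality (arbitrary Polish group, arbitrary $\sigma$-finite base, abstract rather than classical Poisson point process); once those pieces are in hand, the remainder is the short slicing argument above.
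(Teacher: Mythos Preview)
Your proof is correct, and for the implications $(2)\Rightarrow(1)$, $(1)\Rightarrow(3)$, and $(1)\Rightarrow(2)$ it matches the paper's argument essentially verbatim (the paper phrases the superposition as $\mathbf{T}\otimes\mathrm{Id}$ on $X\times\{0,1\}$, but this is your $\mathbf{T}\sqcup\mathbf{T}$).

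The genuine difference is in $(3)\Rightarrow(1)$. The paper does not prove your key lemma; instead it invokes a result of Parreau \& Roy (quoted as Theorem~\ref{Theorem: Parreau & Roy}) asserting that \emph{any} orthogonal projection of $L^{2}(X^{\ast},\mu^{\ast})$ which respects the chaos decomposition and vanishes on the first chaos must be the projection onto the constants. This is combined with Lemma~\ref{Lemma: chaos}, which shows that the conditional-expectation projection $\pi_{\mathbf{T}^{\ast}}$ onto the invariant $\sigma$-algebra respects the chaoses. Your route is more elementary and fully self-contained: the Fubini slicing argument directly establishes that nullity of $\mathbf{T}$ propagates to $\mathbf{T}^{\otimes n}$ for every $n$, from which the vanishing of each chaos component of an invariant vector follows without any appeal to the Parreau--Roy black box. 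What the paper's approach buys in exchange is a structural statement of independent interest (about \emph{all} chaos-preserving projections, not just the invariant one); what your approach buys is transparency and no external dependence. Your closing caveat about generality is handled in the paper exactly as you anticipate: one first reduces to the classical Poisson suspension via Proposition~\ref{Proposition: Poisson point process} and Theorem~\ref{Theorem: Poissonian actions}, after which the Fock identification $U_{S}=\mathrm{F}(U_{T})$ of Appendix~\ref{Appendix: The First Chaos of a Poisson Point process} supplies the chaos-by-chaos action you need.
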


The continuation of our study of Poissonian actions is in the more restrictive framework of {\bf spatial actions} of Polish groups. As opposed to the general notion of measure preserving actions, in which every group element corresponds to a transformation that is defined almost everywhere, in spatial actions one requires an actual Borel action which happens to admit an invariant (or quasi-invariant) measure. In many common cases, such as locally compact Polish groups, the {\it Mackey property} ensures that there is no essential difference between the two notions. However, this completely fails for general Polish groups, as was demonstrated by Becker and by Glasner, Tsirelson \& Weiss in {\it L\'{e}vy groups}. We discuss this in Section \ref{Section: Preliminaries: spatial actions and the point-realization problem}.

Observe that the usual construction of the Poisson suspension $\mathcal{N}$ fails in the spatial category. Indeed, the standard Borel space $\left(X^{\ast},\mathcal{B}^{\ast}\right)$ is classically defined as the space of simple counting Radon measures, namely Radon measures that taking nonnegative integer values and are finite on bounded sets, with respect to an appropriate choice of a metric. Then for a general Borel transformation $T$ of $\left(X,\mathcal{B}\right)$, since it may not preserve the class of bounded sets, there is no apparent way to identify $T$ with a Borel transformation of $\left(X^{\ast},\mathcal{B}^{\ast}\right)$. In order to deal with it we introduce a construction of Poisson point process as a random closed set, calling it {\bf Poisson random set}. This provides a construction of Poisson point processes in Polish topologies that may not be locally compact, and manifests the advantage of treating Poisson point processes abstractly.

Recall that if $\left(X,\mathcal{B}\right)$ is a standard Borel space, then with every Polish topology $\tau$ on $X$ that generates $\mathcal{B}$ is associated the {\bf Effros Borel space},
$$\mathbf{F}_{\tau}\left(X\right):=\left\{F\subset X:X\backslash F\in\tau\right\},$$
whose points are the $\tau$-closed sets. It is known that $\mathbf{F}_{\tau}\left(X\right)$ has a structure of a standard Borel space, that will be referred to as the {\it Effros σ-algebra} and denote by $\mathcal{E}_{\tau}\left(X\right)$. See the presentation in Section \ref{Section: Poisson random set}. Thus, a {\bf random closed set} is a probability measure on $\left(\mathbf{F}_{\tau}\left(X\right),\mathcal{E}_{\tau}\left(X\right)\right)$.

\begin{theorem}[Poisson random set]
\label{Theorem: Poisson random set}
Let $\left(X,\mathcal{B}\right)$ be a standard Borel space. For every Polish topology $\tau$ that generates $\mathcal{B}$, there are random variables 
$$\left\{\Xi_{A}:A\in\mathcal{B}\right\}\text{ of the form }\Xi_{A}:\mathbf{F}_{\tau}\left(X\right)\to\mathbb{Z}_{\geq 0}\cup\left\{ \infty\right\},$$
with the following property:
\begin{quote}
For every Borel non-atomic measure $\mu$ on $\left(X,\mathcal{B}\right)$ that is $\tau$-locally finite, there exists a unique random closed set $\mu_{\tau}$ on $\left(\mathbf{F}_{\tau}\left(X\right),\mathcal{E}_{\tau}\left(X\right)\right)$, with respect to which $\left\{\Xi_A:A\in\mathcal{B}\right\}$ forms a Poisson point process with base space $\left(X,\mathcal{B},\mu\right)$.
\end{quote}
\end{theorem}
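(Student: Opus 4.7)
The plan is to define $\Xi_{A}(F)$ as counting the points of the closed set $F$ lying in $A$, and to construct $\mu_{\tau}$ as the pushforward of the classical Poisson point process via the support map. Crucially, the $\Xi_{A}$ are defined once and for all, independently of the intensity $\mu$.

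First I would define $\Xi_{A}(F):=\#\left(F\cap A\right)\in\mathbb{Z}_{\geq 0}\cup\{\infty\}$ for every $A\in\mathcal{B}$ and $F\in\mathbf{F}_{\tau}\left(X\right)$. Measurability is established in two steps. For $U$ open, the event $\{\Xi_{U}\geq k\}$ equals the union, over all $k$-tuples of disjoint basic open $V_{1},\dots,V_{k}\subseteq U$, of the Effros events $\bigcap_{i}\{F : F\cap V_{i}\neq\emptyset\}$, hence is Borel. One then extends to general $A\in\mathcal{B}$ by a monotone class argument, using that $\Xi_{A\sqcup B}=\Xi_{A}+\Xi_{B}$ on disjoint sets and that the class of $A$ for which $\Xi_{A}$ is measurable is closed under countable disjoint unions and proper differences.

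Next, given $\mu$, I would invoke Proposition \ref{Proposition: classical Poisson point process} to produce the classical Poisson point process $\mathcal{N}=\{N_{A}\}$ realized on a space $\left(X^{\ast},\mathcal{B}^{\ast},\mu^{\ast}\right)$ of simple $\tau$-locally finite counting measures. Since $\mu$ is non-atomic and $\tau$-locally finite, $\mu^{\ast}$-almost every $\omega$ is a simple counting measure whose support is a discrete closed subset of $X$. The map $\mathrm{supp}:X^{\ast}\to\mathbf{F}_{\tau}\left(X\right)$ is Effros-Borel, because $\mathrm{supp}^{-1}\{F : F\cap U\neq\emptyset\}=\{N_{U}\geq 1\}$ for every open $U$; define $\mu_{\tau}:=\mathrm{supp}_{\ast}\mu^{\ast}$. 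By construction $\Xi_{A}\circ\mathrm{supp}=N_{A}$ holds $\mu^{\ast}$-a.e., so the Poisson marginal and disjoint-union additivity of $\mathcal{N}$ transfer to $\{\Xi_{A}\}$ under $\mu_{\tau}$.

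For uniqueness, observe that the Effros $\sigma$-algebra $\mathcal{E}_{\tau}\left(X\right)$ is generated by the events $\{F : F\cap U_{n}\neq\emptyset\}=\{\Xi_{U_{n}}\geq 1\}$ for a countable base $\{U_{n}\}$ of $\tau$. Any random closed set making $\{\Xi_{A}\}$ a Poisson point process with intensity $\mu$ therefore has a distribution determined by the finite-dimensional laws of the $\Xi_{U_{n}}$'s, which are in turn forced by the Poisson-plus-independence axioms (through inclusion-exclusion on disjoint refinements of the $U_{n}$'s, using $\mathbb{P}(\Xi_{V}=0)=e^{-\mu(V)}$ on finite-measure cells). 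The main obstacle is the measurability of $\Xi_{A}$ for arbitrary Borel $A$ together with checking that $\mathrm{supp}$ lands in the $\tau$-locally finite part of $\mathbf{F}_{\tau}\left(X\right)$; without local compactness of $\tau$ one cannot appeal to the classical Radon measure machinery and must instead argue directly via the countable base $\{U_{n}\}$.
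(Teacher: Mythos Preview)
Your proposal has a genuine gap in the measurability step. The definition $\Xi_{A}(F)=\#\left(F\cap A\right)$ is \emph{not} Effros-measurable for general Borel $A$ when $\tau$ is not locally compact. In fact, by a theorem of Christensen (see \cite[Theorem (27.6)]{kechris2012classical}), there exist $\tau$-closed sets $A$ for which $F\mapsto\#\left(F\cap A\right)$ fails to be Effros-measurable. Your monotone class argument breaks because the class of $A$ with $\Xi_{A}$ measurable is not closed under proper differences: from $\Xi_{B}=\Xi_{A}+\Xi_{B\setminus A}$ one cannot recover $\Xi_{B\setminus A}$ as $\Xi_{B}-\Xi_{A}$ on the (Effros-measurable, but typically non-null) set where both equal $\infty$. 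Closed sets that are not $\tau$-discrete produce exactly this obstruction, and such sets abound in $\mathbf{F}_{\tau}(X)$ when $\tau$ is not locally compact.

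The paper resolves this by \emph{not} defining $\Xi_{A}(F)$ as $\#(F\cap A)$. Instead it invokes a measurable injective selection $\{\xi_{n}:n\in\mathbb{N}\}$ on $\mathbf{F}_{\tau}^{\ast}(X)$ (a mild sharpening of Kuratowski--Ryll-Nardzewski) and sets
\[
\Xi_{A}(F)=\#\{n\in\mathbb{N}:\xi_{n}(F)\in A\}=\sum_{n\in\mathbb{N}}1_{\{\xi_{n}\in A\}}(F),
\]
which is Effros-measurable for every $A\in\mathcal{B}$ by construction. This modified $\Xi_{A}$ agrees with $\#(F\cap A)$ only when $F$ is $\tau$-discrete (or when $A$ is open, or contained in an open set meeting $F$ finitely), which suffices because the Poisson random set $\mu_{\tau}^{\ast}$ is eventually shown to be supported on $\tau$-discrete closed sets. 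The measure $\mu_{\tau}^{\ast}$ itself is then built via a Hahn--Kolmogorov extension over an algebra generated by finite partitions coming from a countable base of finite-measure open sets, with consistency borrowed from the classical Poisson process for an auxiliary locally compact topology $\vartheta$. Your pushforward-via-support idea is morally close to this last step, but it cannot bypass the measurability issue: the support map from $X_{\vartheta}^{\ast}$ lands in $\mathbf{F}_{\vartheta}^{\ast}(X)$, not $\mathbf{F}_{\tau}^{\ast}(X)$, and there is no evident Borel map between these Effros spaces for distinct Polish topologies.
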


Our first main result about spatial Poissonian actions allows one to construct spatial probability preserving actions out of spatial infinite measure preserving actions. This will be established for actions with an appropriate Polish topology in the following sense.

\begin{dfn}
A {\bf locally finite Polish action} is a measure preserving action  $\mathbf{T}:G\curvearrowright\left(X,\mathcal{B},\mu\right)$ with a Polish topology for which, simultaneously, $\mathbf{T}$ is Polish and $\mu$ is locally finite.
\end{dfn}

\begin{theorem}
\label{Theorem: spatial Poisson suspension}
Every locally finite Polish action of a Polish group is a base action of a \emph{spatial} Poissonian action.
\end{theorem}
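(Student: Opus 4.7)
My plan is to realize the spatial Poissonian action concretely as the lift of $\mathbf{T}$ to the Effros Borel space provided by Theorem~\ref{Theorem: Poisson random set}. Let $\tau$ be a Polish topology on $X$ witnessing that $\mathbf{T}$ is locally finite Polish. Assuming for simplicity that $\mu$ is non-atomic (the atomic part being an at most countable sum of independent Poisson variables located at the atoms, which are permuted by $\mathbf{T}$ and thus yield an obvious spatial Poissonian factor), Theorem~\ref{Theorem: Poisson random set} supplies the count variables $\{\Xi_A : A\in\mathcal{B}\}$ on $\mathbf{F}_\tau(X)$ and the unique random closed set $\mu_\tau$ turning $\{\Xi_A\}$ into a Poisson point process with base $(X,\mathcal{B},\mu)$. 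Since the action map $G\times X\to X$ is continuous, each $T_g$ is a $\tau$-homeomorphism, and $S_g(F):=T_g(F)$ defines a group action $\mathbf{S}$ on $\mathbf{F}_\tau(X)$. Its Borel measurability reduces to the facts that $\mathcal{E}_\tau(X)$ is generated by the subbasic sets $\{F : F\cap U\neq\emptyset\}$ for $U\in\tau$ and that
\[\bigl\{(g,F)\in G\times\mathbf{F}_\tau(X) : S_g(F)\cap U\neq\emptyset\bigr\} \;=\; \bigl\{(g,F) : F\cap T_g^{-1}(U)\neq\emptyset\bigr\}\]
is Borel by joint continuity of $\mathbf{T}$.

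The invariance of $\mu_\tau$ under $\mathbf{S}$ then drops out from the uniqueness clause of Theorem~\ref{Theorem: Poisson random set}. Indeed, the pointwise identity $\Xi_A\circ S_g=\Xi_{T_g^{-1}(A)}$ on $\mathbf{F}_\tau(X)$ holds by the very definition of the $\Xi_A$'s as counting maps. Hence under the pushforward $(S_g)_\ast\mu_\tau$, each $\Xi_A$ inherits the law of $\Xi_{T_g^{-1}(A)}$ under $\mu_\tau$, namely $\mathrm{Poiss}(\mu(T_g^{-1}(A)))=\mathrm{Poiss}(\mu(A))$ since $\mathbf{T}$ preserves $\mu$; additivity on disjoint unions is transported because $A\mapsto T_g^{-1}(A)$ is a Boolean automorphism. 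Thus $(S_g)_\ast\mu_\tau$ also realizes $\{\Xi_A\}$ as a Poisson point process with base $(X,\mathcal{B},\mu)$, forcing $(S_g)_\ast\mu_\tau=\mu_\tau$ by uniqueness. The same pointwise identity shows that the Koopman operator $U_{S_g}$ sends each $\Xi_A$ to an element of $\mathrm{span}\{\Xi_B:B\in\mathcal{B}_\mu\}$, so it preserves $H(\mathcal{P})$, and $\mathbf{S}$ is the desired Poissonian action with $\mathbf{T}$ as its base action in the sense of Definition~\ref{Definition: base action}.

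The principal obstacle is the joint Borel measurability of the lifted action $(g,F)\mapsto T_g(F)$ on the Effros Borel space: beyond tracking the subbasic sets, one must confirm that the continuity of the original action is enough to propagate cleanly through the Effros structure, presumably leaning on a countable base for $\tau$. A secondary technicality concerns the generativity clause in Definition~\ref{Definition: Poissonian action}, which is not explicitly asserted in Theorem~\ref{Theorem: Poisson random set}; should it fail on $(\mathbf{F}_\tau(X),\mathcal{E}_\tau(X))$, one passes to the standard Borel $G$-factor determined by $\sigma(\{\Xi_A\})$, which is still a spatial action and on which $\{\Xi_A\}$ is generative by construction.
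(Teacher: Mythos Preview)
Your approach is essentially the paper's: lift $\mathbf{T}$ to the Effros space $\mathbf{F}_\tau^\ast(X)$, prove the lift is a Borel action preserving $\mu_\tau^\ast$, and verify the equivariance $\Xi_A\circ S_g=\Xi_{T_g^{-1}(A)}$. However, there is a real gap in your justification of that equivariance.

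You assert that the pointwise identity $\Xi_A\circ S_g=\Xi_{T_g^{-1}(A)}$ holds on all of $\mathbf{F}_\tau(X)$ ``by the very definition of the $\Xi_A$'s as counting maps.'' But the $\Xi_A$'s are \emph{not} the naive counting maps $F\mapsto\#(A\cap F)$. That map is not Effros-measurable for general $A\in\mathcal{B}$ when $\tau$ is not locally compact (indeed, by Christensen's theorem it can fail even for closed $A$). The paper constructs $\Xi_A$ instead via a measurable injective selection $\{\xi_n\}$, setting $\Xi_A(F)=\#\{n:\xi_n(F)\in A\}$. There is no reason for the selection to be $G$-equivariant, so $\Xi_A(T_g(F))=\Xi_{T_g^{-1}(A)}(F)$ need not hold pointwise. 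What the paper shows (Proposition~\ref{Proposition: Effros random variables}) is that $\Xi_A(F)=\#(A\cap F)$ holds when $A$ is open, or when $F$ is $\tau$-discrete; and since $\mu_\tau^\ast$ is supported on $\tau$-discrete sets, the equivariance holds $\mu_\tau^\ast$-a.e., which is all that is needed for the base-action property. For the invariance of $\mu_\tau^\ast$ itself, the paper uses only open $A$ (via the hitting probabilities $\mu_\tau^\ast(B_O)$), where the identity does hold pointwise. Your uniqueness argument for invariance is fine once restricted to open sets, but you should not claim the full pointwise identity for all Borel $A$.

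Your ``principal obstacle'' is correctly identified: the set $\{(g,F):F\cap T_g^{-1}(U)\neq\emptyset\}$ is not obviously Borel, since $T_g^{-1}(U)$ varies with $g$ and the Effros structure is generated by fixed open sets. The paper's Lemma~\ref{Lemma: T*2} handles this by writing the condition as $\exists n\,[\theta_n(F)\in T_g^{-1}(U)]$ via a measurable selection $\{\theta_n\}$, then splitting over a countable base. Your ``secondary technicality'' about generativity is a non-issue: Proposition~\ref{Proposition: Effros random variables}(2) shows that the $\Xi_A$'s generate $\mathcal{E}_\tau^\ast(X)$.
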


As simple it may look, the construction of spatial Poissonian actions in Theorem \ref{Theorem: spatial Poisson suspension} provides a valuable tool to construct probability preserving spatial actions for Polish groups without appealing to the Mackey property. Our following results demonstrate the strength of this construction.

In their work on probability preserving spatial actions of L\'{e}vy groups, Glasner, Tsirelson \& Weiss showed that all such actions are necessarily trivial in that the measure must be supported on the set of fixed points \cite[Theorem 1.1]{glasnertsirelsonweiss2005}, and they left it as an open question whether L\'{e}vy groups may admit nontrivial nonsingular spatial actions \cite[Question 1.2]{glasnertsirelsonweiss2005}. Using Theorem \ref{Theorem: spatial Poisson suspension} we obtain the following partial answer:

\begin{theorem}
\label{Theorem: spatial actions}
A Polish group admits a nontrivial probability preserving spatial action if it admits any of the following nontrivial actions:
\begin{enumerate}
    \item A locally finite Polish (measure preserving) action.
    \item A locally finite Polish nonsingular action with continuous Radon-Nikodym cocycle.
\end{enumerate}
In particular, L\'{e}vy groups admit no such nontrivial actions.
\end{theorem}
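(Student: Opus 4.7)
Both parts reduce to Theorem \ref{Theorem: spatial Poisson suspension} via auxiliary constructions, and the Lévy group statement follows by contraposition from Glasner--Tsirelson--Weiss.

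\emph{Part (1).} Given a nontrivial locally finite Polish measure preserving action $\mathbf{T}:G\curvearrowright(X,\mathcal{B},\mu)$, Theorem \ref{Theorem: spatial Poisson suspension} produces a spatial Poissonian action $\mathbf{S}:G\curvearrowright(\Omega,\mathcal{F},\mathbb{P})$ admitting $\mathbf{T}$ as its base action. The only content remaining is to check that $\mathbf{S}$ is nontrivial in the sense that $\mathbb{P}$ is not concentrated on the set of $\mathbf{S}$-fixed points. Nontriviality of $\mathbf{T}$ provides some $g\in G$ and some $A\in\mathcal{B}_{\mu}$ with $\mu(A\triangle T_{g}^{-1}(A))>0$. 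By Definition \ref{Definition: base action}, $P_{A}\circ S_{g}=P_{T_{g}^{-1}(A)}$; but $P_{A}$ and $P_{T_{g}^{-1}(A)}$ differ by independent Poisson contributions from the two pieces of the symmetric difference (using the Rényi-type independence recalled after Definition \ref{Definition: Poisson point process}), so they cannot be $\mathbb{P}$-a.s.\ equal. Hence $S_{g}\neq\mathrm{id}$ on a set of positive $\mathbb{P}$-measure, and $\mathbf{S}$ is nontrivial.

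\emph{Part (2).} Given a nontrivial locally finite Polish nonsingular action $\mathbf{T}:G\curvearrowright(X,\mathcal{B},\mu)$ with continuous Radon--Nikodym cocycle $\rho$, I pass to the Maharam extension
$$\hat{T}_{g}(x,t):=\bigl(T_{g}(x),\,t-\log\rho_{g}(x)\bigr)\quad\text{on }X\times\mathbb{R},$$
preserving the $\sigma$-finite measure $\hat{\mu}:=\mu\otimes e^{t}\,dt$. The continuity hypothesis on $\rho$ ensures that $\hat{\mathbf{T}}$ is jointly continuous in the product Polish topology; local finiteness of $\hat{\mu}$ follows from that of $\mu$ together with the boundedness of $e^{t}$ on relatively compact intervals in $\mathbb{R}$. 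Since $\mathbf{T}$ is recovered from $\hat{\mathbf{T}}$ as its first-coordinate factor, nontriviality of $\mathbf{T}$ passes to $\hat{\mathbf{T}}$. Thus $\hat{\mathbf{T}}$ is a nontrivial locally finite Polish measure preserving action, and part (1) delivers the desired nontrivial spatial probability preserving action of $G$.

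\emph{Final assertion.} By \cite[Theorem 1.1]{glasnertsirelsonweiss2005}, Lévy groups admit no nontrivial probability preserving spatial action; the contrapositive of what was just proved then rules out any nontrivial action of either type (1) or (2). The main technical point to verify is in part (2): that the assumed continuity of the Radon--Nikodym cocycle is exactly what promotes the Maharam extension to a locally finite Polish action, so that part (1) applies without friction. The nontriviality argument in part (1) is straightforward once one exploits the disjoint-set independence of the Poisson process.
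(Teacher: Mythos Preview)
Your argument is correct and follows the same route as the paper: apply Theorem \ref{Theorem: spatial Poisson suspension} directly in case (1), and in case (2) first pass to the Maharam extension to reduce to case (1), with the L\'{e}vy statement following by contraposition from \cite[Theorem 1.1]{glasnertsirelsonweiss2005}. The one point the paper treats more carefully is the joint continuity of the Maharam extension: the hypothesis is only that $x\mapsto\nabla_g(x)$ is continuous for each fixed $g$, so $\widetilde{\mathbf{T}}$ is a priori only \emph{separately} continuous, and the paper invokes the automatic continuity result \cite[Theorem (9.14)]{kechris2012classical} to upgrade this to joint continuity---your sentence ``the continuity hypothesis on $\rho$ ensures that $\hat{\mathbf{T}}$ is jointly continuous'' glosses over this step.
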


An immediate strengthening of Theorem \ref{Theorem: spatial actions} can be obtained using a recent result of Kechris, Malicki, Panagiotopoulos \& Zielinski \cite[Theorem 2.3]{kechris2022polish}. Recall that an action is {\bf faithful} if each group element, except for the identity, acts nontrivially on a positive measure set.

\begin{cor}
\label{Corollary: free spatial action}
Every Polish group $G$ admitting one of the actions (1) or (2) as in Theorem \ref{Theorem: spatial actions} which is also faithful, admits a free probability preserving spatial action.
\end{cor}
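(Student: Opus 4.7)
The plan is to carefully track faithfulness through the Poissonian construction of Theorem~\ref{Theorem: spatial actions} and then invoke the cited result of Kechris--Malicki--Panagiotopoulos--Zielinski to promote a faithful spatial probability preserving action to a free one.

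First, I would apply Theorem~\ref{Theorem: spatial actions} to the given faithful action in case (1) or (2) to obtain a probability preserving spatial action $\mathbf{S}$ of $G$ that is Poissonian with a base action $\mathbf{T}$ on some standard measure space $(X, \mathcal{B}, \mu)$. The crux is to verify that $\mathbf{S}$ inherits faithfulness from $\mathbf{T}$. Fix $g \in G \setminus \{e\}$ and set $B_g := \{x \in X : T_g x \neq x\}$, so that $\mu(B_g) > 0$ by faithfulness of $\mathbf{T}$. A standard wandering-set argument -- stratifying $B_g$ by the value of $d(x, T_g x)$ for a compatible metric and using local finiteness (hence $\sigma$-finiteness) of $\mu$ -- produces a Borel $A \subseteq B_g$ with $0 < \mu(A) < \infty$ and $A \cap T_g^{-1}(A) = \emptyset$. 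The base-action identity of Definition~\ref{Definition: base action} gives $P_A \circ S_g = P_{T_g^{-1}(A)}$ $\mathbb{P}$-almost surely, and by the R\'enyi-type independence recorded after Definition~\ref{Definition: Poisson point process}, the variables $P_A$ and $P_{T_g^{-1}(A)}$ are independent Poisson random variables with common positive mean $\mu(A)$, so they disagree with positive probability. Hence $\mathbb{P}\bigl(P_A \neq P_A \circ S_g\bigr) > 0$, which forces $S_g$ to act nontrivially on a set of positive $\mathbb{P}$-measure; since $g \neq e$ was arbitrary, $\mathbf{S}$ is faithful.

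Second, with a faithful probability preserving spatial action of $G$ in hand, I would directly invoke \cite[Theorem 2.3]{kechris2022polish} to produce a free probability preserving spatial action of $G$, completing the proof.

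I anticipate the main obstacle to be the disjointification step, particularly when $g$ has a substantial ``nearly periodic'' part in $B_g$ (e.g., points $x$ for which $T_g x$ is very close to $x$): the argument must guarantee that, after restricting to a piece on which $T_g$ moves points by a uniformly positive amount in a compatible metric, one can further shrink to a Borel set $A$ of positive finite measure satisfying $A \cap T_g^{-1}(A) = \emptyset$. This is routine in the locally compact setting but deserves extra care in the general Polish setting used here. A secondary concern is case~(2) of Theorem~\ref{Theorem: spatial actions}, where the base action is merely nonsingular and the ``base'' of the Poissonian spatial action obtained there may involve a measure-preserving reparametrization via the continuous Radon--Nikodym cocycle; one should verify that the resulting Poissonian action still satisfies Definition~\ref{Definition: base action} for an action which inherits faithfulness from the original nonsingular action, so that the argument above applies verbatim.
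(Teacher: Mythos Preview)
Your proposal is correct and follows essentially the same route as the paper: obtain a faithful probability preserving spatial action via the Poissonian construction of Theorem~\ref{Theorem: spatial actions}, then invoke \cite[Theorem~2.3]{kechris2022polish} (the paper states this explicitly as passing to the infinite diagonal product $\mathbf{S}^{\mathbb{N}}$, which is essentially free).

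The only noteworthy difference is in how faithfulness of the Poissonian action is handled. You re-prove it via a metric disjointification, but the paper has already done this work: the injectivity of the embedding $\mathrm{Aut}(X,\mathcal{B},\mu)\hookrightarrow\mathrm{Aut}(X^{\ast},\mathcal{B}^{\ast},\mu^{\ast})$ in Proposition~\ref{Proposition: classical Poisson point process} (and likewise of the Maharam embedding $\mathrm{Aut}(X,\mathcal{B},[\mu])\hookrightarrow\mathrm{Aut}(\widetilde{X},\widetilde{\mathcal{B}},\widetilde{\mu})$) immediately gives that faithfulness of the base passes to the suspension, so the paper's proof simply takes this for granted. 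Your metric-stratification argument works, but is more elaborate than needed; the simpler trick used in the paper's injectivity proof---take $A=T_g^{-1}(B)\setminus B$ for any $B$ with $\mu(T_g^{-1}(B)\triangle B)>0$, which automatically satisfies $A\cap T_g^{-1}(A)=\emptyset$---sidesteps the ``nearly periodic'' concern you flag. Your secondary concern about case~(2) is likewise resolved by the injectivity of the Maharam embedding.
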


We now move to use the construction of spatial Poissonian action in Theorem \ref{Theorem: spatial Poisson suspension} to construct nontrivial spatial actions in the class of diffeomorphism groups. Let $M$ be a Hausdorff connected compact finite dimensional smooth manifold. We call by a {\bf diffeomorphism group} of $M$, for some $1\leq r\leq\infty$, the group
$$\mathrm{Diff}^{r}\left(M\right)$$
of all $C^{r}$-diffeomorphisms of $M$ to itself, considered as a (non-locally compact) Polish group with the compact-open $C^{r}$-topology.

The aforementioned Mackey property, established by Mackey for locally compact groups, was generalized by Kwiatkowska \& Solecki to groups of isometries of locally compact metric spaces and, to the best of our knowledge, currently this is the largest class of Polish groups for which the Mackey property is known to hold. The following theorem shows that diffeomorphism groups are not in this class whenever $r\neq d+1$. It is the consequence of two highly nontrivial results: one is a theorem due to Herman, Thurston and Mather, following a theorem by Epstein, about the simplicity of the identity component in diffeomorphism groups, and another by Kwiatkowska \& Solecki, following Gao \& Kechris, about the topological structure of isometry groups of locally compact metric spaces.

\begin{theorem}
\label{Theorem: Diff}
For every compact smooth $d$-dimensional manifold $M$ and every $1\leq r\leq\infty$ with $r\neq d+1$, the diffeomorphism group $\mathrm{Diff}^{r}\left(M\right)$ is \emph{not} a group of isometries of a locally compact metric space.
\end{theorem}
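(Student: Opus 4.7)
The plan is to argue by contradiction, combining the algebraic simplicity of the identity component $\mathrm{Diff}^{r}_{0}(M)$ (the Epstein--Herman--Mather--Thurston theorem, valid precisely in the regime $r\neq d+1$) with the structural restrictions that the theorem of Kwiatkowska \& Solecki, following Gao \& Kechris, places on Polish groups of isometries of locally compact Polish metric spaces. Assume toward a contradiction that $\mathrm{Diff}^{r}(M)$ is topologically isomorphic to a closed subgroup of $\mathrm{Iso}(Y,\rho)$ for some locally compact Polish metric space $(Y,\rho)$.

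First I would pass to the identity component $\mathrm{Diff}^{r}_{0}(M)$. As a closed normal subgroup of $\mathrm{Diff}^{r}(M)$, it inherits the ambient embedding and is therefore realized as a closed subgroup of $\mathrm{Iso}(Y,\rho)$; in particular it is itself a Polish group of isometries of a locally compact Polish metric space. The Epstein--Herman--Mather--Thurston theorem guarantees that for $r\neq d+1$ this group is nontrivial and \emph{algebraically simple}, so the task reduces to ruling out a nontrivial simple Polish group of the given qualitative type -- non-discrete, path-connected, and not locally compact -- from arising as a closed subgroup of such an isometry group.

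Next I would apply the Kwiatkowska \& Solecki structure theorem to extract from the inclusion $\mathrm{Diff}^{r}_{0}(M)\subseteq\mathrm{Iso}(Y,\rho)$ a canonical closed normal subgroup $N\triangleleft\mathrm{Diff}^{r}_{0}(M)$, morally the ``locally compact part'' of the structure their theorem produces on an isometry group of a locally compact metric space. Algebraic simplicity immediately forces $N$ to be either $\{e\}$ or all of $\mathrm{Diff}^{r}_{0}(M)$. In the first case $\mathrm{Diff}^{r}_{0}(M)$ embeds into the residual quotient, which by the Kwiatkowska \& Solecki description is itself locally compact; in the second case $\mathrm{Diff}^{r}_{0}(M)$ is directly of the form imposed on $N$. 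Either way the identity component becomes a locally compact Polish group, which is impossible: with its compact-open $C^{r}$-topology, $\mathrm{Diff}^{r}_{0}(M)$ is modelled on the infinite-dimensional space of $C^{r}$ vector fields on $M$, and so is manifestly not locally compact.

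The main obstacle I expect is the second step: correctly identifying the canonical closed normal subgroup provided by the Kwiatkowska \& Solecki theorem, and verifying that both horns of the resulting dichotomy really do land in the locally compact regime. Some care is also needed to confirm that the structural property extracted from their theorem is inherited by closed subgroups, so that applying the theorem to $\mathrm{Diff}^{r}_{0}(M)\subseteq\mathrm{Iso}(Y,\rho)$ (rather than only to the full group $\mathrm{Diff}^{r}(M)$) is legitimate, and to make sure the argument does not depend on whether $r<\infty$ (where the model is Banach) or $r=\infty$ (where it is Fr\'{e}chet).
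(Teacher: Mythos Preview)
Your overall strategy---combine the Herman--Thurston--Mather simplicity of $\mathrm{Diff}^{r}_{o}(M)$ with the Kwiatkowska--Solecki characterization---is exactly the paper's approach, and your reduction to the identity component is correct. However, your second step rests on a misstatement of the Kwiatkowska--Solecki theorem, and this is where the argument as written does not go through.

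The theorem does \emph{not} furnish a canonical closed normal subgroup $N$ serving as a ``locally compact part'' with a locally compact residual quotient. What it actually says (their Theorem~1.2) is a neighborhood-basis criterion: a Polish group $G$ is an isometry group of a locally compact metric space if and only if every identity neighborhood contains a closed subgroup $H$ such that the coset space $G/H$ is locally compact and the normalizer $N_{G}(H)$ is \emph{clopen}. Crucially, $H$ is not asserted to be normal; normality must be earned. The paper's proof does this in one line: since $\mathrm{Diff}^{r}_{o}(M)$ is connected and $N_{\mathrm{Diff}^{r}_{o}(M)}(H)$ is a clopen subgroup, the normalizer must be the whole group, so $H$ is normal. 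One also needs $H$ to be nontrivial, which follows because $\mathrm{Diff}^{r}_{o}(M)/H$ is locally compact while $\mathrm{Diff}^{r}_{o}(M)$ itself is not. Now simplicity forces $H=\mathrm{Diff}^{r}_{o}(M)$, contradicting that $H$ was chosen inside a small identity neighborhood.

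So your dichotomy ``$N=\{e\}$ or $N=\mathrm{Diff}^{r}_{o}(M)$, either way locally compact'' is not the mechanism; the correct argument uses connectedness to promote $H$ to normal, and then simplicity delivers the contradiction directly, with no case split on a quotient. Your anticipated ``main obstacle'' is real, but the fix is simply to use the correct statement of the theorem.
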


Although the Mackey property for diffeomorphism groups is unknown, we use the construction of spatial Poissonian actions to construct spatial actions of such groups. In this context it is worth mentioning that by a result of Megrelishvili \cite[Theorem 3.1]{megrelishvili2001every} (see also \cite[Remark 1.7]{glasnertsirelsonweiss2005}), the homeomorphism group of the unit interval admits no action whatsoever. The picture in diffeomorphism groups turns out to be very different.

\begin{theorem}
\label{Theorem: Diff spatial action}
Every diffeomorphism group admits an ergodic free probability preserving spatial action. Hence, diffeomorphism groups are never L\'{e}vy.
\end{theorem}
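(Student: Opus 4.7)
The plan is to construct a suitable locally finite Polish measure preserving action of $G=\mathrm{Diff}^{r}(M)$ on an infinite-measure space, and then run it through the machinery of Theorems \ref{Theorem: spatial Poisson suspension} and \ref{Theorem: ergodicity} together with Corollary \ref{Corollary: free spatial action}; the non-L\'{e}vy conclusion is then immediate from the last sentence of Theorem \ref{Theorem: spatial actions}. Thus it suffices to exhibit a locally finite Polish measure preserving action of $G$ that is \emph{faithful} and \emph{admits no invariant set of positive finite measure}.

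For the base action I would take the Maharam extension of the canonical nonsingular action of $G$ on $\left(M,\mu_{M}\right)$, where $\mu_{M}$ is a smooth Riemannian volume. Set $X:=M\times\mathbb{R}$ with the infinite measure $\mu:=\mu_{M}\otimes e^{t}dt$, and let $G$ act by
\[
\widehat{g}(p,t):=\bigl(g(p),\,t-\log|\mathrm{Jac}_{p}g|\bigr).
\]
Since the Jacobian determinant is continuous in the $C^{r}$-topology for $r\geq 1$, this defines a jointly continuous $G$-action on the Polish space $X$ which preserves $\mu$ by the standard Maharam computation; and $\mu$ is infinite but locally finite, because the slab $M\times[-T,T]$ has finite $\mu$-mass as $M$ is compact. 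Faithfulness is automatic: any nontrivial $g\in G$ moves an open subset of $M$ of positive $\mu_{M}$-measure, and hence $\widehat{g}$ moves an open subset of $X$ of positive $\mu$-measure.

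The crux of the argument, and the step I expect to demand the most care, is verifying that $\widehat{G}\curvearrowright(X,\mu)$ admits no invariant set of positive finite $\mu$-measure — equivalently, that the nonsingular action $G\curvearrowright(M,\mu_{M})$ is of Krieger type $\mathrm{III}_{1}$. The richness of $\mathrm{Diff}^{r}(M)$ should make this accessible: for every nonempty open $U\subset M$, every $\alpha\in\mathbb{R}$, and every $\varepsilon>0$, one can construct a compactly supported $g\in G$ with support in $U$ satisfying $\bigl|\log|\mathrm{Jac}_{p}g|-\alpha\bigr|<\varepsilon$ on a subset of $U$ of positive $\mu_{M}$-measure (for instance, in a chart, compose a translation with a smooth radial dilation of ratio $e^{\alpha/d}$, cut off by a bump function). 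Combined with the transitivity of the identity component of $\mathrm{Diff}^{r}(M)$ on $M$, this density of Jacobian values fills out the asymptotic range of the Radon--Nikodym cocycle to all of $\mathbb{R}$, and the standard Maharam criterion then yields the desired absence of invariant sets of finite positive $\mu$-measure.

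With the base action in hand, Theorem \ref{Theorem: spatial Poisson suspension} gives a spatial Poissonian action of $G$, Theorem \ref{Theorem: ergodicity} promotes it to ergodicity, and Corollary \ref{Corollary: free spatial action} delivers a free spatial probability preserving action from the faithfulness of the base. A final consolidation — for instance, taking a diagonal product of the ergodic Poissonian action with the free action produced by the corollary, or verifying freeness of the Poisson suspension directly from the aperiodicity of $\widehat{g}$ on $X$ — secures an action that is simultaneously ergodic and free. Theorem \ref{Theorem: spatial actions} then precludes $G$ from being L\'{e}vy. The principal difficulty is the type $\mathrm{III}_{1}$ verification, which leverages the abundance of compactly supported diffeomorphisms rather than any dynamical regularity of a single transformation.
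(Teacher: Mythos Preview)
Your proposal is correct and follows essentially the same route as the paper: the tautological nonsingular action of $\mathrm{Diff}^{r}(M)$ on $(M,\mathrm{Vol})$, its Maharam extension as a locally finite Polish measure preserving action, the spatial Poisson suspension via Theorem~\ref{Theorem: spatial Poisson suspension}, ergodicity via Theorem~\ref{Theorem: ergodicity}, and freeness via Corollary~\ref{Corollary: free spatial action}.

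Two small points of precision. First, your claim that ``$\widehat{G}\curvearrowright(X,\mu)$ admits no invariant set of positive finite measure'' is \emph{equivalent} to type~$\mathrm{III}_{1}$ is not quite right---ergodicity of the Maharam extension (which is what type~$\mathrm{III}_{1}$ gives) is strictly stronger than nullness. But this does no harm, since you actually argue for the full ratio set (exactly as the paper does in Appendix~\ref{Appendix: Maharam}), which yields ergodicity and hence nullness. Second, your ``final consolidation'' is handled in the paper by a single clean device: Theorem~\ref{Theorem: ergodicity} gives not just ergodicity but \emph{weak mixing} of $\widetilde{\mathbf{D}}^{\ast}$, so the infinite diagonal product $(\widetilde{\mathbf{D}}^{\ast})^{\mathbb{N}}$---which is precisely the free action produced by Corollary~\ref{Corollary: free spatial action}---is itself ergodic (Remark~\ref{Remark: weak mixing}(3)). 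This avoids the need to separately verify ergodicity of a product or freeness of the suspension.
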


Theorem \ref{Theorem: Diff spatial action} has a consequence in the theory of equivalence relations on standard Borel spaces. An open problem of Kechris asks whether every non-locally compact Polish group admits a non-essentially countable equivalence relation. Recently, this question was answered affirmatively for groups of isometries of a locally compact metric space by Kechris, Malicki, Panagiotopoulos \& Zielinski. While diffeomorphism groups, when $r\neq d+1$, do not belong to this class by Theorem \ref{Theorem: Diff}, we have the following corollary, which is the result of Theorem \ref{Theorem: Diff spatial action} together with a theorem by Feldman \& Ramsay:

\begin{cor}
\label{Corollary: Diff equivalence relations}
Every diffeomorphism group admits a non-essentially countable orbit equivalence relation.
\end{cor}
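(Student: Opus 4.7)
The plan is to derive the corollary by combining Theorem \ref{Theorem: Diff spatial action} with the theorem of Feldman \& Ramsay in the following form: \emph{if a Polish group $G$ admits a free Borel probability preserving action on a standard probability space whose orbit equivalence relation is essentially countable, then there exist a countable subgroup $\Gamma\leq G$ and a conull Borel set on which the $\Gamma$-orbits coincide with the $G$-orbits.}

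First I would invoke Theorem \ref{Theorem: Diff spatial action} with $G=\mathrm{Diff}^{r}\left(M\right)$ to obtain an ergodic free probability preserving spatial action $\mathbf{S}:G\curvearrowright\left(\Omega,\mathcal{F},\mathbb{P}\right)$, and denote its orbit equivalence relation on the standard Borel space $\left(\Omega,\mathcal{F}\right)$ by $E_{\mathbf{S}}$.

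Next I would argue by contradiction: assume that $E_{\mathbf{S}}$ is essentially countable. Then the Feldman--Ramsay theorem yields a countable subgroup $\Gamma\leq G$ together with an $\mathbf{S}$-invariant conull Borel set $\Omega_{0}\subseteq\Omega$ on which the $\Gamma$-orbits coincide with the $G$-orbits. Pick any $\omega\in\Omega_{0}$. Since $\mathbf{S}$ is free, the orbit map $g\mapsto S_{g}\left(\omega\right)$ is a bijection from $G$ onto its $G$-orbit, so this orbit must have cardinality $\left|G\right|$. On the other hand, it equals the $\Gamma$-orbit of $\omega$, which has cardinality at most $\left|\Gamma\right|\leq\aleph_{0}$. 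This forces $\mathrm{Diff}^{r}\left(M\right)$ to be countable---a contradiction, since a diffeomorphism group of a positive-dimensional compact manifold has the cardinality of the continuum. Hence $E_{\mathbf{S}}$ is not essentially countable, as desired.

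The only step requiring some care is invoking the Feldman--Ramsay theorem in exactly the form above; once that reference is in hand, the remaining argument is just the elementary cardinality count enabled by freeness, so I anticipate no serious technical obstacle.
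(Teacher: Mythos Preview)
Your approach matches the paper's: obtain the free probability preserving spatial action from Theorem \ref{Theorem: Diff spatial action}, then invoke Feldman--Ramsay to reach a contradiction. The one discrepancy is the form of Feldman--Ramsay you quote. The statement you attribute to them --- existence of a countable subgroup $\Gamma\leq G$ capturing the $G$-orbits on a conull set --- is not their theorem, and you rightly flag this as the step requiring care. The actual conclusion of \cite[Theorem A]{feldman1985countable} (cf.\ \cite[Theorem 2.1]{kechris2022polish}) is more direct: if a Polish group $G$ admits a free probability preserving Borel action whose orbit equivalence relation is essentially countable, then $G$ is locally compact. The paper simply applies this and notes that $\mathrm{Diff}^{r}(M)$ is not locally compact. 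Your cardinality argument via a countable subgroup is unnecessary once you have the correct statement; just replace your intermediate step with the standard Feldman--Ramsay conclusion and the proof is complete.
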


\subsubsection*{Acknowledgment}

We are grateful to Zemer Kosloff for early discussions that led to this work, to Michael Bj\"{o}rklund for many fruitful discussions around the subjects of this work, and to Jeff Steif for a few enlightening discussions revolving infinitely divisible distributions. We also thank Sasha Danilenko for his comments and particularly for his Remark \ref{Remark: ergodicity}. A special thank goes to Cy Maor, Klas Modin and Alexander Schmeding for sharing with us from their expertise in diffeomorphism groups.\\

\begingroup
\footnotesize
\noindent \textbf{Post-publication Note:} Subsequent to the first publication of this work, Fabien Hoareau and Fran\c{c}ois Le Ma\^{\i}tre published a work which can be found in \cite{hoareau2024spatial}, and their {\it Theorem E} is essentially the same as our Theorem \ref{Theorem: spatial actions} (the relations between the proofs were discussed there). Additionally, their {\it Question 2} is the same as our Question \ref{Question: Radon model}.
\endgroup

\section{General preliminaries}

Let $\left(X,\mathcal{B}\right)$ be a standard Borel space. Thus, $X$ is equipped with a σ-algebra $\mathcal{B}$ that is the Borel σ-algebra of some unspecified Polish topology $\tau$ on $X$. A {\bf transformation} of $\left(X,\mathcal{B}\right)$ is an invertible Borel mapping $T:X\to X$. A {\bf Borel} ($\tau$-{\bf Polish}) {\bf action} of a Polish group $G$ on $X$ is a jointly Borel (jointly $\tau$-continuous, resp.) map $\mathbf{T}:G\times X\to X$, $\mathbf{T}:\left(g,x\right)\mapsto T_{g}\left(x\right)$, such that $T_{e}=\mathrm{Id}_{X}$, where $e\in G$ is the group identity, and $T_{g}\circ T_{h}=T_{gh}$ for every $g,h\in G$. By a {\bf standard measure space} we refer to $\left(X,\mathcal{B},\mu\right)$, where $\left(X,\mathcal{B}\right)$ is a standard Borel space and $\mu$ is a measure that belongs to one of the following of classes:
\begin{description}
    \item[$\mathcal{M}_{1}\left(X,\mathcal{B}\right)$] non-atomic Borel probability measures on $X$.
    \item[$\mathcal{M}_{\sigma}\left(X,\mathcal{B}\right)$] non-atomic infinite σ-finite Borel measures on $X$.
    \item[$\mathcal{M}_{\sigma}^{\tau}\left(X,\mathcal{B}\right)$] those measures in $\mathcal{M}_{\sigma}\left(X,\mathcal{B}\right)$ that are  $\tau$-locally finite.
\end{description}
Being $\tau$-locally finite, by definition, means that every point in $X$ has a $\tau$-neighborhood of finite measure. It is equivalent to the existence of a countable base, or to the existence of a countable open cover, that consists of finite measure sets. The general theory of point processes is usually developed for Radon measure, which are nothing but locally finite measure with respect to a locally compact topology (see e.g. \cite[Theorem 7.8]{folland1999real}). Here we deal with general Polish topologies.

If $\left(X,\mathcal{B},\mu\right)$ is a standard measure space, we denote by $\mathcal{B}_{\mu}$ the ideal of sets $A\in\mathcal{B}$ for which $\mu\left(A\right)<\infty$. We use the common terminology of {\bf $\mu$-a.e.} or {\bf $\mu$-conull}, applied to a specified property of the elements of $X$, to indicate that the property holds true for all the members of some set in $\mathcal{B}$ whose complement is $\mu$-null, namely is assigned zero by $\mu$. By a {\bf transformation} of $\left(X,\mathcal{B},\mu\right)$ we refer to a bi-measurable bijective map between two $\mu$-conull sets of $X$. A transformation $T$ of $\left(X,\mathcal{B}\right)$ is said to be {\bf measure preserving} if $\mu\circ T^{-1}=\mu$, and {\bf nonsingular} if $\mu\circ T^{-1}$ and $\mu$ are in the same measure class, namely they are mutually absolutely continuous. We denote by 
$$\mathrm{Aut}\left(X,\mathcal{B},\mu\right)\text{ and }\mathrm{Aut}\left(X,\mathcal{B},\left[\mu\right]\right)$$
the groups of equivalence classes, up to equality on a $\mu$-conull set, of measure preserving and nonsingular transformations, respectively. The latter group clearly depends only on the measure class $\left[\mu\right]$ of $\mu$ rather than $\mu$ itself, and it becomes a Polish group with the weak topology, in which $S_{n}\xrightarrow[n\to\infty]{}\mathrm{Id}$ if $\mu\left(S_{n}A\triangle A\right)\xrightarrow[n\to\infty]{}$ for every $A\in\mathcal{B}$ with $\mu\left(A\right)<\infty$ and $\frac{d\mu\circ S_{n}}{d\mu}\xrightarrow[n\to\infty]{}1$ in measure. The former group then becomes a closed, hence Polish, subgroup of the latter (see e.g. \cite[end of $\mathsection$1.0]{aaronson}, \cite[Exercise (17.46)]{kechris2012classical}).

\section{Foundations of Poisson point processes}
\label{Part: General framework}

Recall Definition \ref{Definition: Poisson point process} for the general notion of Poisson point process. In the following we introduce the usual concrete construction of the Poisson point process that should be regarded as a folklore. The details of this construction will be important to us for the general context and later uses.

Let $\left(X,\mathcal{B}\right)$ be a standard Borel space. By the Isomorphism Theorem of standard Borel spaces (see e.g. \cite[$\mathsection$15.B]{kechris2012classical}) there can be found a complete metric on $X$ that induces a locally compact Polish topology $\tau$ on $X$, and thus we may relate to bounded Borel sets in $X$ with respect to a fixed choice of such a metric. We may further assume that $\tau$ admits a countable base that consists of bounded sets and, in fact, we may assume that this topology has all properties of the usual topology on $\mathbb{R}$. Denote by $X_{\tau}^{\ast}$ the space of simple counting Radon measures on $X$. That is, a Borel measure $\omega$ on $X$ is in $X_{\tau}^{\ast}$ if it satisfies the following properties:
\begin{enumerate}
    \item (Radon) $\omega\left(A\right)<\infty$ for every bounded set $A\in\mathcal{B}$.
    \item (counting) $\omega\left(A\right)\in\mathbb{Z}_{\geq 0}\cup\left\{\infty\right\}$ for every $A\in\mathcal{B}$.
    \item (simple) $\omega\left(\left\{x\right\}\right)\in\left\{0,1\right\}$ for every $x\in X$.
\end{enumerate}
The space $X_{\tau}^{\ast}$ becomes a standard Borel space with the σ-algebra $\mathcal{B}_{\tau}^{\ast}$ that is generated by the canonical mappings
\begin{equation}
\label{eq: classical Poisson}
\mathcal{N}=\left\{N_{A}:A\in\mathcal{B}\right\},\quad N_{A}:X_{\tau}^{\ast}\to\mathbb{Z}_{\geq 0}\cup\left\{\infty\right\},\quad N_{A}:\omega\mapsto\omega\left(A\right).
\end{equation}
For details on the standard Borel structure of $X_{\tau}^{\ast}$ see \cite[$\mathsection$9.1]{daley2008introduction}.

Suppose now that $\left(X,\mathcal{B},\mu\right)$ is a standard infinite measure space, that is we are given a measure $\mu\in\mathcal{M}_{\sigma}\left(X,\mathcal{B}\right)$. By the Isomorphism Theorem for standard measure spaces (see e.g. \cite[$\mathsection$17.F]{kechris2012classical}) there can be found a complete metric that induces a Polish topology $\tau$ that satisfies all of the above and, at the same time, turns $\mu$ into a Radon measure, i.e. $\mu\in\mathcal{M}_{\sigma}^{\tau}\left(X,\mathcal{B}\right)$. In fact, up to a Borel isomorphism, we may assume that $\left(X,\mathcal{B},\mu\right)$ is $\mathbb{R}$ with its usual Borel structure and the Lebesgue measure. By the classical construction of the Poisson point process, there exists a unique probability measure $\mu_{\tau}^{\ast}\in\mathcal{M}_{1}\left(X_{\tau}^{\ast},\mathcal{B}_{\tau}^{\ast}\right)$ with respect to which the random variables $\mathcal{N}$ as in \eqref{eq: classical Poisson} form a generative Poisson point process with base space $\left(X,\mathcal{B},\mu\right)$. For details about this classical construction we refer to \cite[$\mathsection$2.4]{daley2003introduction}, \cite[$\mathsection$ 3]{last2017lectures}, \cite[Proposition 19.4]{sato1999levy}. In our context we put this as follows:

\begin{prop}
\label{Proposition: classical Poisson point process}
For every standard measure space $\left(X,\mathcal{B},\mu\right)$ there exists a Polish topology $\tau$ and a standard probability space $\left(X_{\tau}^{\ast},\mathcal{B}_{\tau}^{\ast},\mu_{\tau}^{\ast}\right)$ that is defined uniquely by the property that the collection of random variables $\mathcal{N}$ as in \eqref{eq: classical Poisson} forms a generative Poisson point process with base space $\left(X,\mathcal{B},\mu\right)$. Moreover, there is a continuous embedding of Polish groups
$$\mathrm{Aut}\left(X,\mathcal{B},\mu\right)\hookrightarrow\mathrm{Aut}\left(X_{\tau}^{\ast},\mathcal{B}_{\tau}^{\ast},\mu_{\tau}^{\ast}\right),\quad T\mapsto T^{\ast},$$
such that for every $T\in\mathrm{Aut}\left(X,\mathcal{B},\mu\right)$ there is a $\mu_{\tau}^{\ast}$-conull set on which
\begin{equation}
    \label{eq: equivariance}
    N_{A}\circ T^{\ast}=N_{T^{-1}\left(A\right)}\text{ for every }A\in\mathcal{B}.
\end{equation}
\end{prop}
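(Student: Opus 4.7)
The plan is to break the proposition into four stages: fixing a convenient topology, building the probability space, building the group homomorphism, and proving its continuity. First, I would invoke the Isomorphism Theorem for standard measure spaces to identify $\left(X,\mathcal{B},\mu\right)$ with $\left(\mathbb{R},\mathcal{B}_{\mathbb{R}},\lambda\right)$ (or an appropriate standard Borel analogue in the probability case). This supplies at once a locally compact Polish topology $\tau$ with a countable base of bounded open sets for which $\mu$ is Radon, hence the space $X_{\tau}^{\ast}$ of simple counting Radon measures is well-defined, and the fact that $\mathcal{B}_{\tau}^{\ast}$ makes it a standard Borel space is classical \cite[$\mathsection$9.1]{daley2008introduction}.

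Next I would construct $\mu_{\tau}^{\ast}$ by the superposition device: partition $X$ into countably many bounded sets $B_k\in\mathcal{B}_{\mu}$, place in each $B_k$ a Poiss($\mu(B_k)$) number of i.i.d.\ points with law $\mu|_{B_k}/\mu(B_k)$ independently across $k$, and take the image law under the resulting configuration-valued map. A direct computation on finite unions of cylinders $\bigcap_{i}\{N_{A_i}=n_i\}$ shows that $\mathcal{N}$ has the joint law required by Definition~\ref{Definition: Poisson point process}, and generativity holds by construction since $\mathcal{B}_{\tau}^{\ast}$ is generated by $\mathcal{N}$. Uniqueness of $\mu_{\tau}^{\ast}$ is then immediate from the Dynkin $\pi$-$\lambda$ theorem applied to the algebra of cylinders on which Poisson finite-dimensional distributions are prescribed.

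Now I would build $T\mapsto T^{\ast}$ via the essential uniqueness of Poisson point processes (Proposition~\ref{Proposition: Poisson point process}, stated earlier). For $T\in\mathrm{Aut}(X,\mathcal{B},\mu)$, the family $\{N_{T^{-1}(A)}:A\in\mathcal{B}\}$ is also a generative Poisson point process on $(X_{\tau}^{\ast},\mathcal{B}_{\tau}^{\ast},\mu_{\tau}^{\ast})$ with base $(X,\mathcal{B},\mu)$, because $T$ is $\mu$-preserving. Essential uniqueness then supplies $T^{\ast}\in\mathrm{Aut}(X_{\tau}^{\ast},\mathcal{B}_{\tau}^{\ast},\mu_{\tau}^{\ast})$ with $N_A\circ T^{\ast}=N_{T^{-1}(A)}$ $\mu_{\tau}^{\ast}$-a.e.\ for each fixed $A$; promoting this to the uniform-in-$A$ statement on a single conull set proceeds by first fixing the identity on a countable algebra generating $\mathcal{B}$ and then extending by $\sigma$-additivity of the measure-valued map $A\mapsto N_A(\omega)$ (equivalently, by realising $T^{\ast}\omega=\omega\circ T^{-1}$ on the conull set where $\omega\circ T^{-1}$ is locally finite for every base element $B_k$). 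The properties $\mathrm{Id}^{\ast}=\mathrm{Id}$, $(TS)^{\ast}=T^{\ast}S^{\ast}$ and the injectivity $T^{\ast}=\mathrm{Id}\Rightarrow T=\mathrm{Id}$ follow by applying uniqueness a second time.

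The hard part is continuity of $T\mapsto T^{\ast}$ with respect to the weak (Polish) topologies on both sides. The efficient route uses the Fock-space (chaos) decomposition
\begin{equation*}
L^{2}_{\mathbb{R}}(X_{\tau}^{\ast},\mathcal{B}_{\tau}^{\ast},\mu_{\tau}^{\ast})\;\cong\;\bigoplus_{n\geq 0}L^{2}_{\mathbb{R},\mathrm{sym}}(X^{n},\mathcal{B}^{\otimes n},\mu^{\otimes n}),
\end{equation*}
under which the Koopman operator $U_{T^{\ast}}$ is intertwined with $\bigoplus_{n\geq 0}U_{T}^{\otimes n}$. Since weak convergence $T_{k}\to T$ in $\mathrm{Aut}(X,\mathcal{B},\mu)$ is equivalent to strong operator convergence $U_{T_{k}}\to U_{T}$ on $L^{2}_{\mathbb{R}}(\mu)$, and strong convergence passes to tensor powers (each $U_{T}^{\otimes n}$ being an isometry so that no uniform bound is lost), a standard tail-control argument for bounded operators on a Hilbert direct sum yields strong convergence on the whole Fock space. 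Reinterpreting this on the probability side gives $T_{k}^{\ast}\to T^{\ast}$ weakly in $\mathrm{Aut}(X_{\tau}^{\ast},\mathcal{B}_{\tau}^{\ast},\mu_{\tau}^{\ast})$. The main subtlety, and the one I expect to require the most care, is precisely this infinite-measure tensor-power argument: unlike the probability case, approximation by cylindric functions on $L^{2}(\mu)$ must be handled with the $L^{2}$-generating family $\{\mathbf{1}_A:A\in\mathcal{B}_{\mu}\}$, and the convergence across chaoses must be uniformly controlled via the exponential weighting inherent in the Poisson chaos expansion.
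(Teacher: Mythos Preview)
Your proposal is essentially correct but diverges from the paper in two places, and both are worth noting.

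For the construction of $T\mapsto T^{\ast}$, the paper does not go through the abstract uniqueness of Poisson point processes (your Proposition~\ref{Proposition: Poisson point process}). Instead it defines $T^{\ast}$ directly as the push-forward $T^{\ast}(\omega)=\omega\circ T^{-1}$, after first restricting to the conull set $X_{\tau}^{\ast}(T)=\bigcap_{O\in\mathcal{O}}\bigcap_{n\in\mathbb{Z}}\{N_{T^{n}(O)}<\infty\}$ on which $\omega\circ T^{-1}$ is again a simple counting Radon measure. This is more elementary and avoids a mild logical hiccup in your argument: in the paper Proposition~\ref{Proposition: Poisson point process} is stated \emph{after} the present proposition and its proof uses the existence part of the present proposition, so invoking it here is out of order (though not genuinely circular, since only the existence of $\mu_{\tau}^{\ast}$ is needed, which you have already established). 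Your abstract route does have the advantage of working uniformly for any realisation of the Poisson process, but the paper's concrete push-forward gives the equivariance \eqref{eq: equivariance} for free on the whole conull set without the separate ``promote from a countable algebra'' step.

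For continuity, the paper sidesteps your Fock-space computation entirely: it simply invokes the automatic continuity theorem of Le~Ma\^{\i}tre \cite[Theorem 1.2]{le2022polish}, by which \emph{any} group homomorphism out of $\mathrm{Aut}(X,\mathcal{B},\mu)$ into a Polish group is continuous. This reduces what you flag as ``the hard part'' to a citation. Your chaos-decomposition argument is correct and self-contained (and indeed the identification $U_{T^{\ast}}\cong\bigoplus_{n}U_{T}^{\otimes n}$ appears later in the paper's Appendix), but it is considerably more work than needed here; the tail-control subtlety you anticipate is real but ultimately unnecessary.
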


\begin{proof}
Thanks to the Isomorphism Theorem for standard measure spaces, we may assume that $\left(X,\mathcal{B},\mu\right)$ is nothing but the real line with its Lebesgue measure, for which the aforementioned classical construction of the Poisson point process with respect to the usual topology is well known. Let us show the second part. Pick a countable base $\mathcal{O}$ for $\tau$ consisting of $\mu$-finite measure sets. An arbitrary element $\left[T\right]_{\mu}\in\mathrm{Aut}\left(X,\mathcal{B},\mu\right)$ will be mapped to $\left[T^{\ast}\right]_{\mu_{\tau}^{\ast}}\in\mathrm{Aut}\left(X_{\tau}^{\ast},\mathcal{B}_{\tau}^{\ast},\mu_{\tau}^{\ast}\right)$ as follows. Pick a representative $T\in\left[T\right]_{\mu}$ and consider the measurable set
$$X_{\tau}^{\ast}\left(T\right):=\bigcap\nolimits_{O\in\mathcal{O}}\bigcap\nolimits_{n\in\mathbb{Z}}\left\{ N_{T^{n}\left(O\right)}<\infty\right\}.$$
By the construction of $\mu_{\tau}^{\ast}$, as $T$ preserves $\mu$ we see that $\mu_{\tau}^{\ast}\left(X_{\tau}^{\ast}\triangle X_{\tau}^{\ast}\left(T\right)\right)=0$. Let $T^{\ast}$ be the automorphism of $\left(X_{\tau}^{\ast}\left(T\right),\mathcal{B}_{\tau}^{\ast}\left(T\right),\mu_{\tau}^{\ast}\mid_{X_{\tau}^{\ast}\left(T\right)}\right)$ that is given by
$$T^{\ast}\left(\omega\right)=\omega\circ T^{-1},\quad\omega\in X_{T}^{\ast}.$$
As $\mu_{\tau}^{\ast}\left(X_{\tau}^{\ast}\triangle X_{\tau}^{\ast}\left(T\right)\right)=0$, the element $\left[T^{\ast}\right]_{\mu_{\tau}^{\ast}}\in\mathrm{Aut}\left(X_{\tau}^{\ast},\mathcal{B}_{\tau}^{\ast},\mu_{\tau}^{\ast}\right)$ is well-defined. This defines the desired mapping, that from now on we abbreviate without the equivalence class notations, i.e. $\mathrm{Aut}\left(X,\mathcal{B},\mu\right)\to\mathrm{Aut}\left(X_{\tau}^{\ast},\mathcal{B}_{\tau}^{\ast},\mu_{\tau}^{\ast}\right)$, $T\mapsto T^{\ast}$. It is clearly a homomorphism. In order to see that it is injective, note that if $T\neq\mathrm{Id}_{X}\in\mathrm{Aut}\left(X,\mathcal{B},\mu\right)$ there is a Borel set of the form $A=T^{-1}\left(B\right)\backslash B$ with $\mu\left(A\right)>0$. Since $\mu\left(A\cap T^{-1}\left(A\right)\right)=0$ we have $\mu_{\tau}^{\ast}\left(N_{A}\circ T^{\ast}>0,N_{A}=0\right)>0$, hence $T^{\ast}\neq\mathrm{Id}_{X_{\tau}^{\ast}}\in\mathrm{Aut}\left(X_{\tau}^{\ast},\mathcal{B}_{\tau}^{\ast},\mu_{\tau}^{\ast}\right)$. The equivariance property \eqref{eq: equivariance} is verified by noting that for every $A\in\mathcal{B}_{\mu}$, for $\omega$ in an appropriate $\mu^{\ast}$-conull set,
$$N_{A}\circ T_{g}^{\ast}\left(\omega\right)=N_{A}\left(\omega\circ T_{g}^{-1}\right)=\omega\left(T_{g}^{-1}\left(A\right)\right)=N_{T_{g}^{-1}\left(A\right)}\left(\omega\right).$$
The continuity of this embedding can be verified using elementary considerations, but it is also an immediate consequence of the automatic continuity property of $\mathrm{Aut}\left(X,\mathcal{B},\mu\right)$ by Le Ma\^{\i}tre \cite[Theorem 1.2]{le2022polish}.
\end{proof}

\begin{dfn}[Classical Poisson point process]
The construction in Proposition \ref{Proposition: classical Poisson point process}, while depending on the highly non-canonical choice of $\tau$, serves as a concrete Poisson point process with an arbitrary base space $\left(X,\mathcal{B},\mu\right)$. Ignoring $\tau$, we will refer to it as the {\bf classical Poisson point process} and denote it by
$$\left(X^{\ast},\mathcal{B}^{\ast},\mu^{\ast}\right)\text{ and }\mathcal{N}=\left\{N_{A}:A\in\mathcal{B}\right\}.$$
\end{dfn}

While the choice of $\tau$ affects directly $X^{\ast}=X_{\tau}^{\ast}$ as a subspace of the $\tau$-Radon measures on $X$, the following proposition shows that the Poisson point process is a universal object to which the choice of $\tau$ is irrelevant up to a Borel isomorphism. In fact, we will show that the Poisson point process is universal in the widest sense of Definition \ref{Definition: Poisson point process} up to a Borel isomorphism.

\begin{prop}
\label{Proposition: Poisson point process}
All generative Poisson point processes on the same base space are isomorphic. More explicitly, let $\left(X,\mathcal{B},\mu\right)$ be a standard measure space and $\mathcal{P}=\left\{P_{A}:A\in\mathcal{B}\right\}$ be a generative Poisson point process that is defined on $\left(\Omega,\mathcal{F},\mathbb{P}\right)$ with base space $\left(X,\mathcal{B},\mu\right)$. There is an isomorphism of measure spaces
$$\varphi:\left(\Omega,\mathcal{F},\mathbb{P}\right)\to\left(X^{\ast},\mathcal{B}^{\ast},\mu^{\ast}\right),$$
such that on a $\mathbb{P}$-conull set,
$$N_{A}\circ\varphi=P_{A}\text{ for all }A\in\mathcal{B}.$$
Thus, $\mathcal{P}$ is a random measure on $X$ in that for every $\omega$ in an appropriate $\mathbb{P}$-conull set, the map $A\mapsto P_{A}\left(\omega\right)$ defines a measure on $\left(X,\mathcal{B}\right)$.
\end{prop}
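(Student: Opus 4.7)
The plan is to define $\varphi(\omega)$ as the measure $A\mapsto P_{A}(\omega)$, show it is $\mathbb{P}$-almost surely a simple counting $\tau$-Radon measure (hence lies in $X^{\ast}$), and use the generative hypothesis to upgrade this into a measure-space isomorphism. The construction breaks into five steps: (i) build a pre-measure on a countable generating algebra, (ii) extend via Carath\'{e}odory, (iii) identify the extension with $P_{B}$ for every $B\in\mathcal{B}$ by a monotone-class argument, (iv) show the extension $\mathbb{P}$-a.s.\ lies in $X^{\ast}$ by distributional comparison with the classical construction $\mathcal{N}$, and (v) deduce injectivity from the generative hypothesis.

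First, fix the Polish topology $\tau$ provided by Proposition \ref{Proposition: classical Poisson point process} and pick a countable subalgebra $\mathcal{A}\subset\mathcal{B}_{\mu}$ that generates $\mathcal{B}$ and contains a countable base of $\tau$-bounded sets. Since there are only countably many pairs and countably many countable disjoint partitions of elements of $\mathcal{A}$ by elements of $\mathcal{A}$, intersecting countably many almost-sure statements yields a conull $\Omega_{0}\subset\Omega$ on which $A\mapsto P_{A}(\omega)$ is a $[0,\infty]$-valued countably additive pre-measure on $\mathcal{A}$. Finite additivity is axiom (2) of Definition \ref{Definition: Poisson point process}; for countable additivity along a partition $A=\bigsqcup_{n}A_{n}$ in $\mathcal{A}$, note $\sum_{n}\mu(A_{n})=\mu(A)<\infty$, so the tail $P_{\bigsqcup_{n>N}A_{n}}$ has mean $\sum_{n>N}\mu(A_{n})\to 0$, yielding $L^{1}$-convergence $\sum_{n\le N}P_{A_{n}}\to P_{A}$, which monotonicity in $N$ upgrades to almost-sure convergence.

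Next, Carath\'{e}odory-extend $A\mapsto P_{A}(\omega)$ to a measure $\hat{\nu}_{\omega}$ on $(X,\mathcal{B})$. The family of $B\in\mathcal{B}_{\mu}$ for which $\hat{\nu}_{\omega}(B)=P_{B}(\omega)$ holds $\mathbb{P}$-almost surely contains $\mathcal{A}$ and is closed under monotone limits in $\mathcal{B}_{\mu}$ (using $\mathbb{E}|P_{B}-P_{B_{n}}|\le\mu(B\triangle B_{n})\to 0$ together with continuity of $\hat{\nu}_{\omega}$ from below), so a monotone class argument extends the identity to all of $\mathcal{B}_{\mu}$, and $\sigma$-finiteness then pushes it to all of $\mathcal{B}$. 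Now set $\varphi(\omega):=\hat{\nu}_{\omega}$ for $\omega\in\Omega_{0}$. By R\'{e}nyi's theorem, the finite-dimensional joint law of $(P_{A})_{A\in\mathcal{A}}$ under $\mathbb{P}$ depends only on $\mu$ and hence matches that of $(N_{A})_{A\in\mathcal{A}}$ under $\mu^{\ast}$. Since membership of a measure in $X^{\ast}$ is encoded by countably many measurable conditions on $(\omega(A))_{A\in\mathcal{A}}$ ($\tau$-local finiteness via the chosen base, integer-valued and simple via refinements drawn from $\mathcal{A}$), and these conditions hold $\mu^{\ast}$-a.s., they also hold $\varphi_{\ast}\mathbb{P}$-a.s.; a standard Dynkin argument then gives $\varphi_{\ast}\mathbb{P}=\mu^{\ast}$ on $\mathcal{B}^{\ast}$. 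Finally, the generative hypothesis forces $\{P_{A}\}=\{N_{A}\circ\varphi\}$ to separate points of $\Omega$ modulo $\mathbb{P}$-null sets, so $\varphi$ is $\mathbb{P}$-a.s.\ injective, and therefore a measure-space isomorphism.

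The main obstacle will be step (iv): verifying that the abstractly-built random measure $\hat{\nu}_{\omega}$ lies in the concrete space $X^{\ast}$, which is defined via the auxiliary topology $\tau$ that is not part of the abstract data $\mathcal{P}$. This is handled by observing that all properties defining $X^{\ast}$ (most delicately, the simplicity condition $\hat{\nu}_{\omega}\{x\}\le 1$) are encoded by countably many events in the $\sigma$-algebra generated by $\{N_{A}:A\in\mathcal{A}\}$, so they can be transported from the classical Poisson model to $\varphi_{\ast}\mathbb{P}$ via equality of finite-dimensional distributions rather than argued directly from the axioms.
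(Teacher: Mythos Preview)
Your overall strategy matches the paper's: define $\varphi(\omega)$ as the set function $A\mapsto P_A(\omega)$, show it is almost surely an element of $X^\ast$, and conclude $\varphi_\ast\mathbb{P}=\mu^\ast$ by the uniqueness of the Poisson law. The paper compresses your steps (i)--(iv) into a single citation of \cite[Lemma 9.1.XIV]{daley2008introduction}, checking only finite additivity and continuity at $\emptyset$ (via dominated convergence on $P_{A_n}$ for $A_n\searrow\emptyset$) and letting that lemma produce the random-measure version. Your more explicit route through Carath\'eodory and monotone classes is a reasonable unpacking of what that lemma does, and your step (v) on injectivity via the generative hypothesis is a point the paper leaves implicit.

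There is, however, a genuine gap in your step (i). You claim that a countable algebra $\mathcal{A}$ has only countably many countable disjoint partitions of its elements, and then intersect the corresponding conull sets. This cardinality claim is false: already for the algebra generated by rational half-open intervals in $[0,1)$ there are continuum-many countable partitions of $[0,1)$ into such intervals. Your argument that $\sum_{n\le N}P_{A_n}\to P_A$ almost surely is correct \emph{for each fixed partition}, but you cannot pass to a single conull set valid for all of them by intersection. This is precisely the subtlety that the Daley--Vere-Jones machinery is designed to handle, typically via a \emph{dissecting system}: a fixed nested sequence of finite partitions that separates points, along which one checks continuity (countably many conditions), and from which the full random measure is reconstructed. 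The integer-valuedness of $P_A$ does not by itself rescue the argument, since it only gives $\sum_n P_{A_n}(\omega)\le P_A(\omega)$ with no pointwise lower bound. You can repair step (i) by either invoking the cited lemma directly, or by working with a dissecting system rather than an arbitrary countable algebra; once that is done, the rest of your outline goes through.
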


\begin{proof}
For $\omega\in\Omega$ define $\varphi\left(\omega\right):\mathcal{B}\to\mathbb{R}_{\geq 0}$ by
$$\varphi\left(\omega\right)\left(A\right)=P_{A}\left(\omega\right).$$
First we prove that for $\omega$ in a $\mathbb{P}$-conull set it holds that $\varphi\left(\omega\right)\in X^{\ast}$, namely that $\varphi\left(\omega\right)$ extends to a genuine measure on $\mathcal{B}$. To this end we verify the conditions appears in \cite[p. 17]{daley2008introduction}. The finite additivity of $\varphi\left(\omega\right)$ for every $\omega\in\Omega$ is immediate from the definition of $\mathcal{P}$ as a Poisson point process. As for the continuity, we note that the finite additivity implies that $P_{A}\leq P_{B}$ whenever $A\subset B$, hence if $\mathcal{B}_{\mu}\ni A_n\searrow\emptyset$ as $n\to\infty$ then the pointwise limit of the monotone descending sequence $P_{A_{n}}$ as $n\to\infty$ is a nonnegative random variable and, by the dominated convergence theorem, it has zero mean, hence $P_{A_{n}}\searrow 0$ as $n\to\infty$, establishing the continuity. It follows from \cite[Lemma 9.1.XIV]{daley2008introduction} that $\varphi\left(\omega\right)\in X^{\ast}$ for $\omega$ on a $\mathbb{P}$-conull set. Thus we obtain a map $\varphi:\Omega\to X^{\ast}$ that is defined on an appropriate $\mathbb{P}$-conull set. In order to see that $\mathbb{P}\circ\varphi^{-1}=\mu^{\ast}$, note that for every $\omega$ in a $\mathbb{P}$-conull set,
$$N_{A}\left(\varphi\left(\omega\right)\right)=\varphi\left(\omega\right)\left(A\right)=P_{A}\left(\omega\right),\quad A\in\mathcal{B},$$
from which it readily follows that $\mathcal{N}$ forms a Poisson point process on the same base space with respect to $\mathbb{P}\circ\varphi^{-1}$ as well. The uniqueness of the classical Poisson point process implies that $\mathbb{P}\circ\varphi^{-1}=\mu^{\ast}$.
\end{proof}

\section{Foundations of Poissonian actions}

\subsection{Preliminaries: actions and representations of Polish groups}

The very definition of 'measure preserving action' of a Polish group has more than one possible meaning, essentially two meanings, which is the source of a substantial part of our study. The first and more general definition can be put in two ways, namely {\it near actions}, as was put by Zimmer, and {\it Boolean actions}, a classical object that admit a convenient formulation due to Glasner, Tsirelson \& Weiss (see \cite[Introduction]{glasnertsirelsonweiss2005} and the references therein). The other, more restrictive notion of spatial actions will be presented in the second part of this work, starting in Section \ref{Part: Spatial Poissonian actions}.

\begin{dfn}[Zimmer]
\label{Definition: near action}
A {\bf near action} of a Polish group $G$ on a standard measure space $\left(X,\mathcal{B},\mu\right)$ is a jointly measurable map $\mathbf{T}:G\times X\to X$, $\mathbf{T}:\left(g,x\right)\mapsto T_{g}\left(x\right)$, such that:
\begin{enumerate}
    \item $T_{e}=\mathrm{Id}_{X}$ on a $\mu$-conull set, where $e\in G$ is the identity element.
    \item $T_{g}\circ T_{h}=T_{gh}$ on a $\mu$-conull set for every $g,h\in G$.\footnote{It is crucial here that the $\mu$-conull set may depend on $g,h$.}
    \item $\mu\circ T_{g}^{-1}=\mu$ for every $g\in G$.
\end{enumerate}
\end{dfn}

There is a natural way to view $\mathrm{Aut}\left(X,\mathcal{B},\mu\right)$ as the group of Boolean isometries of the measure algebra associated with $\left(X,\mathcal{B},\mu\right)$ (i.e. the Boolean algebra of Borel sets in $X$ modulo $\mu$-null sets, with its natural complete metric). With this point of view, Glasner, Tsirelson \& Weiss put the notion of Boolean action as follows.

\begin{dfn}[Glasner, Tsirelson \& Weiss]
\label{Definition: Boolean action}
A {\bf Boolean action} of a Polish group $G$ on a standard measure space $\left(X,\mathcal{B},\mu\right)$ is a continuous (or equivalently, measurable)\footnote{The equivalence of measurability and continuity for homomorphisms between Polish group is by Pettis' automatic continuity (see e.g. \cite[$\mathsection$9.C]{kechris2012classical}).} homomorphism $\mathbf{T}:G\to\mathrm{Aut}\left(X,\mathcal{B},\mu\right)$.
\end{dfn}

The reader may recall Proposition \ref{Proposition: classical Poisson point process} that suggests why the formulation of Boolean action is the one that is more convenient for our purposes. However, as it was observed by Glasner, Tsirelson \& Weiss \cite[Introduction]{glasnertsirelsonweiss2005}, both definitions are essentially the same. Thus, we relate to near actions and Boolean actions simply as {\bf actions}, and denote this unified object by
$$\mathbf{T}:G\curvearrowright\left(X,\mathcal{B},\mu\right).$$
We may refer to an action as a {\it finite action} or an {\it infinite action}, to indicate whether the underlying measure is a probability measure or an infinite one.

A pair of actions $\mathbf{T}:G\curvearrowright\left(X,\mathcal{B},\mu\right)$ and $\mathbf{T}':G\curvearrowright\left(X',\mathcal{B}',\mu'\right)$ are considered to be isomorphic, if there exists a bi-measurable bijection $\varphi:X\to X'$, that is possibly defined only on corresponding conull sets, such that $\mu\circ\varphi^{-1}=\mu'$ and $T'_{g}\circ\varphi=\varphi\circ T_{g}$ for each $g\in G$ on a $\mu$-conull set.

Recall that a unitary representation $\mathbf{U}$ of a Polish group $G$ on a Hilbert space $\mathcal{H}$ is a group homomorphism $\mathbf{U}:G\to\mathrm{U}\left(\mathcal{H}\right)$, $\mathbf{U}:g\mapsto U_{g}$, where $\mathrm{U}\left(\mathcal{H}\right)$ denotes the unitary group of $\mathcal{H}$, such that the mapping $\left(g,f\right)\mapsto U_{g}f$ is jointly continuous.\footnote{In fact, from \cite[Theorem 4.8.6]{srivastava2008course} or \cite[Exercise (9.16) i)]{kechris2012classical} it follows that if the mapping $\left(g,f\right)\mapsto U_{g}f$ is jointly measurable then it is automatically jointly continuous.} When $\mathcal{H}$ is a subspace of some $L_{\mathbb{R}}^{2}$-space, we say that $\mathbf{U}$ is {\bf unital} if
$U_{g}1=1$ for every $g\in G$ (when $1$ is integrable), and that $\mathbf{U}$ is {\bf positivity preserving} if $f\geq 0$ implies $U_{g}f\geq 0$ for every $g\in G$ (see \cite[p. 207]{sato1999levy}). The (real) {\bf Koopman representation} of an action $\mathbf{T}:G\curvearrowright\left(X,\mathcal{B},\mu\right)$ is the unitary representation 
$$\mathbf{U}:G\to\mathrm{U}\left(L_{\mathbb{R}}^{2}\left(X,\mathcal{B},\mu\right)\right),\quad U_{g}:f\mapsto f\circ T_{g}^{-1},\quad g\in G.$$

\subsection{Poissonian actions}
\label{Part: Poissonian actions}

Recall Definition \ref{Definition: Poissonian action} for the general notion of Poissonian action. Let us now introduce the Poisson suspension construction in a precise way as a natural source for Poissonian actions. Suppose $\mathbf{T}:G\curvearrowright\left(X,\mathcal{B},\mu\right)$ is an action of a Polish group $G$ on a standard measure space $\left(X,\mathcal{B},\mu\right)$. Consider the classical Poisson point process $\mathcal{N}=\left\{N_{A}:A\in\mathcal{B}\right\}$ that is defined on $\left(X^{\ast},\mathcal{B}^{\ast},\mu^{\ast}\right)$, and using the continuous embedding introduced in Proposition \ref{Proposition: classical Poisson point process}, we obtain an action $\mathbf{T}^{\ast}:G\curvearrowright\left(X^{\ast},\mathcal{B}^{\ast},\mu^{\ast}\right)$ by composing
$$\mathbf{T}^{\ast}:G\to\mathrm{Aut}\left(X,\mathcal{B},\mu\right)\hookrightarrow\mathrm{Aut}\left(X^{\ast},\mathcal{B}^{\ast},\mu^{\ast}\right),\quad g\mapsto T_{g}\mapsto T_{g}^{\ast}.$$
The equivariance property \eqref{eq: equivariance} readily implies that the Koopman representation of $\mathbf{T}^{\ast}$ preserves $H\left(\mathcal{P}\right)$, thus $\mathbf{T}^{\ast}$ is a Poissonian action with respect to $\mathcal{N}$, and its base action is nothing but $\mathbf{T}:G\curvearrowright\left(X,\mathcal{B},\mu\right)$.

\begin{rem}
Note that in the setting of Definition \ref{Definition: base action}, if $\mathcal{P}$ is not generative we may move to the sub-σ-algebra on $\Omega$ that is generated by $\mathcal{P}$. Then if the equivariance relations of $\mathbf{S}$ and $\mathbf{T}$ hold, this sub-σ-algebra is $\mathbf{S}$-invariant and we obtain a factor of $\mathbf{S}$ which is a Poissonian action.
\end{rem}

We now formulate and prove each of the statements in Theorem \ref{Theorem: Poissonian actions}.

\begin{thm}
\label{Theorem: Poissonian actions I}
Let $\mathbf{T}:G\curvearrowright\left(X,\mathcal{B},\mu\right)$ be an action and $\mathcal{P}=\left\{P_{A}:A\in\mathcal{B}\right\}$ be a generative Poisson point process that is defined on $\left(\Omega,\mathcal{F},\mathbb{P}\right)$ with base space $\left(X,\mathcal{B},\mu\right)$. There exists a Poissonian action $\mathbf{S}:G\curvearrowright\left(\Omega,\mathcal{F},\mathbb{P}\right)$ with respect to $\mathcal{P}$ whose base action is $\mathbf{T}$. Moreover, $\mathbf{S}$ is essentially unique in that Poissonian actions associated with isomorphic actions are isomorphic.
\end{thm}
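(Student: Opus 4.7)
The plan is to construct $\mathbf{S}$ by transporting the classical Poisson suspension of $\mathbf{T}$ through the universal isomorphism supplied by Proposition \ref{Proposition: Poisson point process}, and to derive uniqueness directly from generativity. First I would invoke Proposition \ref{Proposition: Poisson point process} to produce an isomorphism of measure spaces $\varphi:(\Omega,\mathcal{F},\mathbb{P})\to(X^{\ast},\mathcal{B}^{\ast},\mu^{\ast})$ with $N_{A}\circ\varphi=P_{A}$ $\mathbb{P}$-a.s.\ for every $A\in\mathcal{B}$. Conjugation by $\varphi$ then yields a Polish group isomorphism $\Phi:\mathrm{Aut}(X^{\ast},\mathcal{B}^{\ast},\mu^{\ast})\to\mathrm{Aut}(\Omega,\mathcal{F},\mathbb{P})$, $R\mapsto\varphi^{-1}\circ R\circ\varphi$. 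In parallel, Proposition \ref{Proposition: classical Poisson point process} provides the continuous embedding $T\mapsto T^{\ast}$, and its composition with $\mathbf{T}$ delivers the Poisson suspension $\mathbf{T}^{\ast}:G\curvearrowright(X^{\ast},\mathcal{B}^{\ast},\mu^{\ast})$, whose base action is $\mathbf{T}$ by the equivariance \eqref{eq: equivariance}.

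Next I would set $S_{g}:=\Phi(T_{g}^{\ast})$. Joint continuity of $g\mapsto S_{g}$ is inherited from the continuity of $g\mapsto T_{g}^{\ast}$ (Proposition \ref{Proposition: classical Poisson point process}) and of $\Phi$, so $\mathbf{S}$ is a bona fide Boolean action of $G$ on $(\Omega,\mathcal{F},\mathbb{P})$. Combining $N_{A}\circ\varphi=P_{A}$ with \eqref{eq: equivariance} gives, for each $g\in G$ and $A\in\mathcal{B}$ on a $\mathbb{P}$-conull set,
\[
P_{A}\circ S_{g}=N_{A}\circ\varphi\circ S_{g}=N_{A}\circ T_{g}^{\ast}\circ\varphi=N_{T_{g}^{-1}(A)}\circ\varphi=P_{T_{g}^{-1}(A)},
\]
so $\mathbf{T}$ is a base action for $\mathbf{S}$ in the sense of Definition \ref{Definition: base action}. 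Since the Koopman operator of $S_{g}$ sends each generator $P_{A}\in H(\mathcal{P})$ to $P_{T_{g}^{-1}(A)}\in H(\mathcal{P})$, the subspace $H(\mathcal{P})$ is invariant, and hence $\mathbf{S}$ is Poissonian with respect to $\mathcal{P}$.

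For the uniqueness clause, suppose $\mathbf{S}'$ is another Poissonian action of $G$ on $(\Omega,\mathcal{F},\mathbb{P})$ with respect to $\mathcal{P}$ whose base action is also $\mathbf{T}$. Then $P_{A}\circ S_{g}=P_{T_{g}^{-1}(A)}=P_{A}\circ S'_{g}$ $\mathbb{P}$-a.s.\ for all $g$ and $A$, so $S_{g}$ and $S'_{g}$ agree on the sub-$\sigma$-algebra generated by $\mathcal{P}$; generativity (item (3) of Definition \ref{Definition: Poisson point process}) upgrades this to equality in $\mathrm{Aut}(\Omega,\mathcal{F},\mathbb{P})$. The assertion that Poissonian actions associated with isomorphic base actions are isomorphic then follows by functoriality: an isomorphism $\psi$ between two base actions $\mathbf{T},\mathbf{T}'$ induces canonically an isomorphism between the corresponding classical Poisson suspensions, which after transport through the respective $\varphi$'s realises the desired isomorphism of Poissonian actions. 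The main nuisance I anticipate is coordinating $\mathbb{P}$-conull sets across uncountably many $g\in G$, but this is precisely the bookkeeping the Boolean/near-action formalism of Definitions \ref{Definition: near action}--\ref{Definition: Boolean action} is designed to absorb.
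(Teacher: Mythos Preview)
Your proof is correct and follows essentially the same route as the paper: both arguments transport the classical Poisson suspension $\mathbf{T}^{\ast}$ through the universal isomorphism $\varphi$ of Proposition \ref{Proposition: Poisson point process}, verify the equivariance $P_{A}\circ S_{g}=P_{T_{g}^{-1}(A)}$ directly, and deduce uniqueness from generativity. The only organizational difference is that the paper first shows every Poissonian action with base $\mathbf{T}$ (on any probability space, with any generative $\mathcal{P}$) is isomorphic to $\mathbf{T}^{\ast}$ and then handles isomorphic bases, whereas you first prove equality of two Poissonian actions sharing the same $(\Omega,\mathcal{F},\mathbb{P})$ and $\mathcal{P}$ and then invoke functoriality; these are equivalent reorderings of the same content.
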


\begin{proof}
By Proposition \ref{Proposition: Poisson point process} we may assume without loss of generality that $\left(\Omega,\mathcal{F},\mathbb{P}\right)=\left(X^{\ast},\mathcal{B}^{\ast},\mu^{\ast}\right)$ and that $\mathcal{P}=\mathcal{N}$. Then the aforementioned construction of the Poisson suspension, using the embedding described in Proposition \ref{Proposition: classical Poisson point process}, is a Poissonian action with respect to $\mathcal{N}$ whose base action is $\mathbf{T}$.

In order to show the uniqueness, we start by showing that all Poissonian actions with base action $\mathbf{T}$ are isomorphic to the Poisson suspension $\mathbf{T}^{\ast}:G\curvearrowright\left(X^{\ast},\mathcal{B}^{\ast},\mu^{\ast}\right)$. Let $\mathbf{S}:G\curvearrowright\left(\Omega,\mathcal{F},\mathbb{P}\right)$ be such a Poissonian action with respect to a generative Poisson point process $\mathcal{P}=\left\{ P_{A}:A\in\mathcal{B}\right\}$ that is defined on $\left(\Omega,\mathcal{F},\mathbb{P}\right)$. By Proposition \ref{Proposition: Poisson point process} there is an isomorphism of probability spaces $\varphi:\Omega\to X^{\ast}$ such that $\mathbb{P}\circ\varphi^{-1}=\mu^{\ast}$, that interchanges $\mathcal{P}$ and $\mathcal{N}$ in that $N_{A}\circ\varphi=P_{A}$ for every $A\in\mathcal{B}$. In order to verify that indeed $\varphi\circ S_{g}=T_{g}^{\ast}\circ\varphi$ on a $\mathbb{P}$-conull set for every $g\in G$, note that
$$N_{A}\circ\varphi\circ S_{g}=P_{A}\circ S_{g}=P_{T_{g}^{-1}\left(A\right)}=N_{A}\circ T_{g}^{\ast}\circ\varphi\quad\text{ for every }A\in\mathcal{B},$$
and since $\mathcal{N}$ is generative the desired property follows. Thus, $\varphi$ is an isomorphism of actions.

Now for the general case, suppose that $\mathbf{T}':G\curvearrowright\left(X',\mu'\right)$ is an infinite action that is isomorphic to $\mathbf{T}:G\curvearrowright\left(X,\mathcal{B},\mu\right)$ through $\psi:X\to X'$. From the classical Poisson point process $\mathcal{N}'=\left\{ N_{A}':A\in\mathcal{B}'\right\} $ associated with $\left(X',\mu'\right)$ we obtain the Poisson point process $\mathcal{P}:=\left\{ N_{A}'\circ\psi^{-1}:A\in\mathcal{B}\right\} $ associated with $\left(X,\mathcal{B},\mu\right)$. It is then evident that the Poisson suspension $\mathbf{T}'^{\ast}:G\curvearrowright\left(X'^{\ast},\mu'^{\ast}\right)$ is a Poissonian action with base action $\mathbf{T}:G\curvearrowright\left(X,\mathcal{B},\mu\right)$ via the Poisson point process $\mathcal{P}$. Hence, by the previous part of the proof it is isomorphic to the Poisson suspension $\mathbf{T}^{\ast}:G\curvearrowright\left(X^{\ast},\mathcal{B}^{\ast},\mu^{\ast}\right)$.
\end{proof}

The following theorem generalizes the second statement of Theorem \ref{Theorem: Poissonian actions}, which can be seen as a statement on Koopman representations, to a larger family of unitary representations.

\begin{thm}
\label{Theorem: Poissonian actions II}
Suppose $\mathcal{P}$ is a generative Poisson point process that is defined on $\left(\Omega,\mathcal{F},\mathbb{P}\right)$ with base space $\left(X,\mathcal{B},\mu\right)$. Every unital positivity preserving unitary representation $\mathbf{U}:G\to\mathrm{U}\left(H\left(\mathcal{P}\right)\right)$, admits a Poissonian action $\mathbf{S}:G\curvearrowright\left(\Omega,\mathcal{F},\mathbb{P}\right)$ with respect to $\mathcal{P}$, whose Koopman representation is $\mathbf{U}$. Moreover, $\mathbf{S}$ admits an essentially unique base action $\mathbf{T}:G\curvearrowright\left(X,\mathcal{B},\mu\right)$.
\end{thm}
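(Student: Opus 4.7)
The plan is to construct from $\mathbf{U}$ a base action $\mathbf{T}:G\curvearrowright(X,\mathcal{B},\mu)$ and then invoke Theorem \ref{Theorem: Poissonian actions I} to produce the desired Poissonian action $\mathbf{S}=\mathbf{T}^{\ast}$, whose Koopman representation automatically restricts to $\mathbf{U}$ on $H(\mathcal{P})$. First, I would observe that $\mathbf{1}\in H(\mathcal{P})$, since $P_{A_{n}}/\mu(A_{n})\to\mathbf{1}$ in $L^{2}(\Omega,\mathbb{P})$ along any sequence with $\mu(A_{n})\to\infty$. Hence $H(\mathcal{P})=\mathbb{R}\mathbf{1}\oplus\mathcal{H}_{1}$, where $\mathcal{H}_{1}=\overline{\mathrm{span}}\{\tilde{P}_{A}:A\in\mathcal{B}_{\mu}\}$ is the centered first Poisson chaos (with $\tilde{P}_{A}=P_{A}-\mu(A)\mathbf{1}$), and the standard isometry $\Psi:L_{\mathbb{R}}^{2}(X,\mu)\to\mathcal{H}_{1}$ sends $\mathbf{1}_{A}\mapsto\tilde{P}_{A}$ and, more generally, gives $\Psi(\phi)=\int\phi\,dN-\int\phi\,d\mu$ for $\phi\in L^{1}\cap L^{2}$. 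Since $\mathbf{U}$ is unital it preserves both summands of this orthogonal decomposition, so it defines through $\Psi$ a unitary representation $\mathbf{V}=(V_{g})_{g\in G}$ on $L_{\mathbb{R}}^{2}(X,\mathcal{B},\mu)$.

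Next, I would translate positivity of $\mathbf{U}$ into operator-theoretic properties of $\mathbf{V}$. The key observation is that an element $\alpha\mathbf{1}+\Psi(\phi)\in H(\mathcal{P})$ is $\mathbb{P}$-a.s.\ nonnegative if and only if $\phi\geq 0$ $\mu$-a.e.\ and $\int\phi\,d\mu\leq\alpha$: the ``only if'' direction examines respectively the empty Poisson configuration (which forces $\alpha\geq\int\phi\,d\mu$) and configurations with many points in $\{\phi<0\}$ (which would drive $\alpha+\Psi(\phi)$ arbitrarily negative unless $\phi\geq 0$). Combined with unitality, this shows $V_{g}$ is positivity preserving on $L^{2}(X,\mu)$ and satisfies $\int V_{g}\phi\,d\mu\leq\int\phi\,d\mu$ for every $\phi\in L_{+}^{1}\cap L^{2}$. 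Applying the same inequality to $V_{g}^{-1}=V_{g^{-1}}$ yields the integral identity $\int V_{g}\phi\,d\mu=\int\phi\,d\mu$.

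The technical heart is showing that $V_{g}\mathbf{1}_{A}=\mathbf{1}_{T_{g}^{-1}(A)}$ for some $T_{g}^{-1}(A)\in\mathcal{B}_{\mu}$. Setting $\phi_{A}:=V_{g}\mathbf{1}_{A}$, the above gives $\int\phi_{A}\,d\mu=\int\phi_{A}^{2}\,d\mu=\mu(A)$. Suppose toward a contradiction that $E:=\{\phi_{A}>1\}$ has positive measure (necessarily finite, since $\mu(E)\leq\int_{E}\phi_{A}\,d\mu\leq\mu(A)$). Then $\mathbf{1}_{E}\leq\phi_{A}=V_{g}\mathbf{1}_{A}$, and applying the positivity preserving $V_{g}^{-1}$ to $V_{g}\mathbf{1}_{A}-\mathbf{1}_{E}$ gives $V_{g}^{-1}\mathbf{1}_{E}\leq\mathbf{1}_{A}$. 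Since $V_{g}^{-1}\mathbf{1}_{E}\in[0,1]$ and $\int V_{g}^{-1}\mathbf{1}_{E}\,d\mu=\int(V_{g}^{-1}\mathbf{1}_{E})^{2}\,d\mu=\mu(E)$, the nonnegative integrand $V_{g}^{-1}\mathbf{1}_{E}(1-V_{g}^{-1}\mathbf{1}_{E})$ has zero integral and hence vanishes a.e., forcing $V_{g}^{-1}\mathbf{1}_{E}=\mathbf{1}_{F}$ for some $F\subset A$ with $\mu(F)=\mu(E)$. Now computing $\|V_{g}\mathbf{1}_{A\setminus F}\|_{2}^{2}$ by unitarity gives $\mu(A)-\mu(E)$, while the direct expansion $\|\phi_{A}-\mathbf{1}_{E}\|_{2}^{2}=\mu(A)-2\int_{E}\phi_{A}\,d\mu+\mu(E)$ forces $\int_{E}\phi_{A}\,d\mu=\mu(E)$, contradicting $\phi_{A}>1$ on $E$. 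Hence $\phi_{A}\leq 1$ a.e., and the same $\int\phi_{A}(1-\phi_{A})\,d\mu=0$ argument (now with nonnegative integrand) yields $\phi_{A}\in\{0,1\}$ a.e., so $\phi_{A}=\mathbf{1}_{T_{g}^{-1}(A)}$.

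The assignment $g\mapsto T_{g}$ is a homomorphism into $\mathrm{Aut}(X,\mathcal{B},\mu)$ by functoriality, and inherits continuity from strong continuity of $\mathbf{U}$; so we obtain a base action $\mathbf{T}$. Theorem \ref{Theorem: Poissonian actions I} then produces $\mathbf{S}=\mathbf{T}^{\ast}$, whose Koopman agrees with $\mathbf{U}$ on the generating family $\{P_{A}\}$ (both sending $P_{A}\mapsto P_{T_{g}^{-1}(A)}$) and hence on all of $H(\mathcal{P})$. Uniqueness of the base is immediate from $P_{T_{g}^{\prime-1}(A)}=U_{g}P_{A}=P_{T_{g}^{-1}(A)}$ for any alternative base $\mathbf{T}'$. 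The main obstacle is the bootstrap in the third paragraph: positivity of $V_{g}$ alone does not force $V_{g}\mathbf{1}_{A}\leq 1$ (constants are not even in $L^{2}(X,\mu)$ in the infinite-measure case), so one must exploit positivity of $V_{g}^{-1}$ together with the $L^{2}$-isometry in a careful pointwise argument; this delicate interplay is the crucial non-formal step.
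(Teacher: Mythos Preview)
Your overall architecture matches the paper's: transfer $\mathbf{U}$ through the first-chaos isometry to a unitary representation on $L_{\mathbb{R}}^{2}(X,\mu)$, show it is the Koopman representation of some $\mathbf{T}\in\mathrm{Aut}(X,\mathcal{B},\mu)$-valued action, then invoke Theorem \ref{Theorem: Poissonian actions I}. Two points of divergence deserve comment.

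First, a genuine gap. Your ``key observation'' that $\alpha\mathbf{1}+\Psi(\phi)\geq 0$ $\mathbb{P}$-a.s.\ iff $\phi\geq 0$ and $\int\phi\,d\mu\leq\alpha$ is correct, but your justification is only heuristic and does not survive the passage to infinite $\mu$. There is no ``empty Poisson configuration'' of positive probability when $\mu(X)=\infty$, so you cannot read off $\alpha\geq\int\phi\,d\mu$ that way; more seriously, you must apply the characterization to $V_{g}\phi$, which a priori lies only in $L^{2}$, and for such functions the formula $\Psi(\psi)=\int\psi\,dN-\int\psi\,d\mu$ is unavailable, so the ``many points in $\{\psi<0\}$'' picture needs real work. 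The paper handles exactly this point (its Lemma \ref{Lemma: positivity preserving unitaries}) by recognising $\Psi(\psi)$ as an infinitely divisible Poissonian random variable with L\'evy measure $\mu|_{\{\psi\neq 0\}}\circ\psi^{-1}$ and invoking the structure theory of bounded-below {\sc id} laws (Proposition \ref{Proposition: First chaos}(3), via \cite[Theorem 24.7]{sato1999levy}) to obtain both $\psi\geq 0$ and $\int\psi\,d\mu<\infty$ simultaneously. Your heuristics point at the right statement but do not prove it.

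Second, a genuinely different and valid step. Once quasi-unitality and positivity of $V_{g}$ are in hand, the paper cites the Banach--Lamperti theorem (Lemma \ref{Lemma: Banach-Lamperti}) to conclude $V_{g}$ is a Koopman operator. Your direct argument that $V_{g}\mathbf{1}_{A}$ is an indicator---bootstrapping via $V_{g}^{-1}\mathbf{1}_{E}$ and the identity $\int\phi_{A}(1-\phi_{A})\,d\mu=0$---is correct and elementary, and once you know indicators go to indicators the passage to a Boolean automorphism (hence an element of $\mathrm{Aut}(X,\mathcal{B},\mu)$) is standard. This is a nice self-contained replacement for the Banach--Lamperti citation. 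The uniqueness argument at the end is the same as the paper's.
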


Noting that Koopman representations are unital and positivity preserving, we obtain from Theorem \ref{Theorem: Poissonian actions II} for Koopman representations:

\begin{cor}
\label{Corollary: Poissonian actions II}
Every Poissonian action arises from an essentially unique base action.
\end{cor}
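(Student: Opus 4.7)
The corollary should follow immediately from Theorem \ref{Theorem: Poissonian actions II} applied to the Koopman representation of the given Poissonian action. Concretely, let $\mathbf{S}: G \curvearrowright (\Omega, \mathcal{F}, \mathbb{P})$ be Poissonian with respect to a generative Poisson point process $\mathcal{P}$ with base space $(X, \mathcal{B}, \mu)$, and let $\mathbf{U}$ denote its Koopman representation on $L^{2}_{\mathbb{R}}(\Omega, \mathcal{F}, \mathbb{P})$. By Definition \ref{Definition: Poissonian action}, $\mathbf{U}$ preserves $H(\mathcal{P})$, so it restricts to a unitary representation $\mathbf{U}_{0}: G \to \mathrm{U}(H(\mathcal{P}))$.

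The main work is to verify that $\mathbf{U}_{0}$ satisfies the two standing hypotheses of Theorem \ref{Theorem: Poissonian actions II}. Positivity preservation is inherited from $\mathbf{U}$: any $f \in H(\mathcal{P})$ with $f \geq 0$ satisfies $U_{g} f = f \circ S_{g}^{-1} \geq 0$, since $S_{g}$ is a genuine transformation of $(\Omega, \mathcal{F}, \mathbb{P})$. For unitality, I first need to check that the constant function $1$ lies in $H(\mathcal{P})$; in the typical setting $\mu \in \mathcal{M}_{\sigma}$ this follows from a short $L^{2}$-law-of-large-numbers argument: picking $A_{n} \in \mathcal{B}_{\mu}$ with $\mu(A_{n}) \to \infty$, the Poisson variance formula gives $\mathbb{E}\bigl[(P_{A_{n}}/\mu(A_{n}) - 1)^{2}\bigr] = 1/\mu(A_{n}) \to 0$, so $P_{A_{n}}/\mu(A_{n}) \to 1$ in $L^{2}$ and hence $1 \in H(\mathcal{P})$. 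Then $U_{g} 1 = 1$ is automatic since $\mathbf{S}$ preserves $\mathbb{P}$.

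Having verified the hypotheses, applying Theorem \ref{Theorem: Poissonian actions II} to $\mathbf{U}_{0}$ produces an essentially unique base action $\mathbf{T}: G \curvearrowright (X, \mathcal{B}, \mu)$ for $\mathbf{S}$, which is precisely the content of the corollary. Since Theorem \ref{Theorem: Poissonian actions II} is doing all the heavy lifting, the main obstacle is really just the above bookkeeping; the one minor technical point I anticipate is the finite-intensity case $\mu \in \mathcal{M}_{1}$, where the law-of-large-numbers argument fails, but there one can either observe that the unitality clause of Theorem \ref{Theorem: Poissonian actions II} is vacuous when $1 \notin H(\mathcal{P})$, or simply treat this case by the same construction applied to the centered subspace.
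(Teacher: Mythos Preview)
Your approach is correct and matches the paper's: the corollary is deduced from Theorem \ref{Theorem: Poissonian actions II} by observing that the Koopman representation of a Poissonian action, restricted to $H(\mathcal{P})$, is unital and positivity preserving. The paper records this in a single sentence; you have simply spelled out the verification, including the nice $L^{2}$-law-of-large-numbers argument showing $1\in H(\mathcal{P})$ (which in the paper is packaged into the decomposition $H(\mathcal{P})=H_{1}(\mathcal{P})\oplus\mathbb{R}$ stated before Lemma \ref{Lemma: positivity preserving unitaries}).

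Two minor remarks. First, your final caveat about the finite-intensity case is unnecessary: throughout the paper the base measure $\mu$ is taken in $\mathcal{M}_{\sigma}$ (see the setup of Proposition \ref{Proposition: classical Poisson point process}), so your law-of-large-numbers argument always applies; the suggestion that ``unitality is vacuous when $1\notin H(\mathcal{P})$'' is a bit off, since unitality is a standing hypothesis of Theorem \ref{Theorem: Poissonian actions II}, not a conclusion. Second, after invoking Theorem \ref{Theorem: Poissonian actions II} you obtain a Poissonian action $\mathbf{S}'$ with Koopman representation $\mathbf{U}_{0}$ on $H(\mathcal{P})$ and base action $\mathbf{T}$; strictly speaking one must note that $\mathbf{S}'=\mathbf{S}$ because $\mathcal{P}$ is generative (so the Koopman action on $H(\mathcal{P})$ determines the automorphism). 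The paper glosses over this identically, so your level of detail is already at least as careful.
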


For the proof of Theorem \ref{Theorem: Poissonian actions II} we introduce two lemmas. First, we will use the following substitution for the notion of unital Koopman operators when dealing with infinite measure spaces. A unitary operator $U$ of $L_{\mathbb{R}}^{2}\left(X,\mathcal{B},\mu\right)$ for some standard measure space $\left(X,\mathcal{B},\mu\right)$ will be called {\bf quasi-unital} if
$$\int_{X}Ufd\mu=\int_{X}fd\mu\text{ for every }f\in L_{\mathbb{R}}^{1}\left(X,\mathcal{B},\mu\right)\cap L_{\mathbb{R}}^{2}\left(X,\mathcal{B},\mu\right).$$
In particular, $U$ preserves $L_{\mathbb{R}}^{1}\left(X,\mathcal{B},\mu\right)\cap L_{\mathbb{R}}^{2}\left(X,\mathcal{B},\mu\right)$. The group of quasi-unital positivity preserving unitary operators of $L_{\mathbb{R}}^{2}\left(X,\mathcal{B},\mu\right)$ is a closed subgroup of the unitary group of $L_{\mathbb{R}}^{2}\left(X,\mathcal{B},\mu\right)$, hence it is a Polish group.

In the following we formulate in our terminology a well-known fact which is a version of the Banach-Lamperti Theorem for $L^{2}$. As it was observed in \cite[footnote 3]{tulcea1964ergodic}, while the general Banach-Lamperti Theorem is formulated for unitary operators of $L^{p}$-spaces for $p\neq 2$, for positivity preserving unitary operators the proof of Lamperti applies for $L^{2}$-spaces as well. A byproduct of the following lemma is that when $\mu$ is a probability measure, for a positivity preserving unitary operator of $L_{\mathbb{R}}^{2}\left(X,\mathcal{B},\mu\right)$ being unital and quasi-unital is the same.

\begin{lem}
\label{Lemma: Banach-Lamperti}
For every standard measure space $\left(X,\mathcal{B},\mu\right)$, the Koopman embedding
$$T\mapsto U_{T},\quad U_{T}f=f\circ T^{-1},$$
forms an isomorphism of Polish groups between $\mathrm{Aut}\left(X,\mathcal{B},\mu\right)$ and the group of quasi-unital positivity preserving unitary operators of $L_{\mathbb{R}}^{2}\left(X,\mathcal{B},\mu\right)$.
\end{lem}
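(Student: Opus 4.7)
The plan is to construct an inverse to the Koopman embedding $T\mapsto U_T$. That the map is a continuous injective homomorphism into the group of quasi-unital positivity preserving unitaries is essentially immediate: $U_T$ is unitary by measure preservation, positivity preserving since $f\ge 0$ implies $f\circ T^{-1}\ge 0$, and quasi-unital by the change-of-variables formula applied to functions in $L^{1}\cap L^{2}$; continuity follows directly from the definition of the weak topology on $\mathrm{Aut}(X,\mathcal{B},\mu)$ recalled in Section~2. The substance of the lemma is surjectivity.

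So let $U$ be a quasi-unital positivity preserving unitary on $L_{\mathbb{R}}^{2}(X,\mathcal{B},\mu)$. First I would note that $U^{-1}$ enjoys the same two properties: positivity transfers via the adjoint identity $\langle U^{-1}f,g\rangle=\langle f,Ug\rangle$, and quasi-unitality via $\int f\,d\mu=\int UU^{-1}f\,d\mu=\int U^{-1}f\,d\mu$. Consequently, for every $A,D\in\mathcal{B}_{\mu}$,
$$\int_{D}U1_{A}\,d\mu=\langle U1_{A},1_{D}\rangle=\langle 1_{A},U^{-1}1_{D}\rangle\le\int_{X}U^{-1}1_{D}\,d\mu=\mu(D),$$
which, specialized to $D=\{U1_{A}>c\}$ (of finite measure because $U1_{A}\in L^{2}$), yields the crucial a priori bound $U1_{A}\le 1$ almost everywhere.

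Combining this upper bound with $U1_{A}\ge 0$ and the identity $\int U1_{A}\,d\mu=\mu(A)=\int(U1_{A})^{2}\,d\mu$ given by quasi-unitality and the $L^{2}$-isometry, we obtain $\int U1_{A}(1-U1_{A})\,d\mu=0$ with nonnegative integrand, forcing $U1_{A}\in\{0,1\}$ a.e. Hence $U1_{A}=1_{\phi(A)}$ for a uniquely determined $\phi(A)\in\mathcal{B}/\mathcal{N}_{\mu}$, with $\mu(\phi(A))=\mu(A)$. I would then verify that $\phi$ extends to a measure-preserving Boolean $\sigma$-automorphism of $\mathcal{B}/\mathcal{N}_{\mu}$: finite additivity follows from $\langle U1_{A},U1_{B}\rangle=0$ for disjoint $A,B$ combined with the pointwise nonnegativity of both factors, forcing disjoint supports; $\sigma$-additivity follows from $L^{2}$-continuity of $U$; the extension to $\mathcal{B}$ uses $\sigma$-finiteness of $\mu$; and the analogous construction applied to $U^{-1}$ produces the inverse $\psi$ with $\psi\circ\phi=\phi\circ\psi=\mathrm{id}$ in the measure algebra. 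By the classical identification recalled in the excerpt between $\mathrm{Aut}(X,\mathcal{B},\mu)$ and the group of measure-preserving Boolean automorphisms of $\mathcal{B}/\mathcal{N}_{\mu}$, there is a unique $T\in\mathrm{Aut}(X,\mathcal{B},\mu)$ with $\phi(A)=T(A)$; then $U$ and $U_{T}$ agree on every $1_{A}$ with $A\in\mathcal{B}_{\mu}$, hence on their dense linear span in $L^{2}$, so $U=U_{T}$.

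Finally, continuity of the inverse $U\mapsto T$ is direct: strong-operator convergence $U_{n}\to U$ yields $1_{T_{n}(A)}=U_{n}1_{A}\to U1_{A}=1_{T(A)}$ in $L^{2}$ for every $A\in\mathcal{B}_{\mu}$, which is precisely convergence $T_{n}\to T$ in the weak topology. The main obstacle throughout the argument is securing the $L^{\infty}$-bound $U1_{A}\le 1$ before knowing anything structural about $U$: absent this bound, one would at best recover a weighted Banach--Lamperti representation $U1_{A}=h\cdot 1_{\phi(A)}$ with a nontrivial Radon--Nikodym weight $h$, and it is precisely the quasi-unitality hypothesis, transferred to $U^{-1}$ via the adjoint, that eliminates $h$ and yields indicator-valued images.
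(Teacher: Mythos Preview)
Your approach differs from the paper's, which simply invokes the $L^{2}$ version of the Banach--Lamperti theorem for positivity-preserving unitaries (cited from Ionescu Tulcea): every such $U$ has the form $Uf=\sqrt{d\mu\circ T^{-1}/d\mu}\,f\circ T^{-1}$ for some nonsingular $T$, and then quasi-unitality forces the Radon--Nikodym factor to equal $1$, i.e.\ $T$ is measure preserving. Your route is more self-contained, essentially re-deriving the Lamperti structure by hand through the indicator-to-indicator mechanism; that is a genuine gain in elementarity.

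There is, however, a gap in your one-line argument that $U^{-1}$ is quasi-unital. The chain $\int f=\int UU^{-1}f=\int U^{-1}f$ applies quasi-unitality of $U$ to the function $U^{-1}f$, and this is only licensed once you know $U^{-1}f\in L^{1}\cap L^{2}$; membership in $L^{2}$ is automatic, but $L^{1}$-integrability is exactly what is in question when $\mu$ is infinite. (The paper asserts without proof that these operators form a group, so arguably sidesteps the issue --- but you chose to argue it, and the argument as written is circular.) The gap can be closed with the tools already in your proof: orthogonality plus positivity give $U^{-1}1_{A}\cdot U^{-1}1_{B}=0$ for disjoint $A,B\in\mathcal{B}_{\mu}$; taking a partition $X=\bigsqcup_{k}B_{k}$ with $\mu(B_{k})<\infty$ and $A_{n}=B_{1}\cup\cdots\cup B_{n}$, the monotone limit $\psi:=\lim_{n}U^{-1}1_{A_{n}}$ satisfies, for every $D\in\mathcal{B}_{\mu}$,
\[
\int_{D}\psi=\lim_{n}\langle 1_{D},U^{-1}1_{A_{n}}\rangle=\lim_{n}\langle U1_{D},1_{A_{n}}\rangle=\int U1_{D}=\mu(D)
\]
by quasi-unitality of $U$, whence $\psi=1$. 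Since the $U^{-1}1_{B_{k}}$ are nonnegative with pairwise disjoint supports and sum to $1$, each is an indicator; refining, $U^{-1}1_{A}$ is an indicator of measure $\mu(A)$ for every $A\in\mathcal{B}_{\mu}$, so $U^{-1}$ is quasi-unital after all and the rest of your argument runs cleanly.
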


\begin{proof}
By the Banach-Lamperti Theorem in $L^{2}$ (see the aforementioned \cite[footnote 3]{tulcea1964ergodic}), there is a bijective correspondence between the group of nonsingular transformations of $\left(X,\mathcal{B},\mu\right)$ and the group of positivity preserving unitary operators of $L_{\mathbb{R}}^{2}\left(X,\mathcal{B},\mu\right)$, that is given by
$$T\mapsto U_{T},\quad U_{T}f=\sqrt{\frac{d\mu\circ T^{-1}}{d\mu}}f\circ T^{-1}.$$
This is a homomorphism of groups and, since $T$ is necessarily measure preserving when $U_{T}$ is quasi-unital, the measure preserving transformations correspond to the quasi-unital positivity preserving unitary operators. Thus, the restriction of this homomorphism to $\mathrm{Aut}\left(X,\mathcal{B},\mu\right)$, which is the usual Koopman embedding, forms a bijective homomorphism from $\mathrm{Aut}\left(X,\mathcal{B},\mu\right)$ onto the closed subgroup of quasi-unital positivity preserving unitary operators of $L_{\mathbb{R}}^{2}\left(X,\mathcal{B},\mu\right)$. Finally, the Polish topology of $\mathrm{Aut}\left(X,\mathcal{B},\mu\right)$ is, by definition, induced from this correspondance, so this is a homeomorphism.
\end{proof}

For the next step toward proving Theorem \ref{Theorem: Poissonian actions II} we will take a further look into unitary operators of $H\left(\mathcal{P}\right)$. The following objects and their basic properties are presented in more details in Appendix \ref{Appendix: The First Chaos of a Poisson Point process}. Fix a Poisson point process $\mathcal{P}$ as in Definition \ref{Definition: Poisson point process}. The {\bf first chaos} of $\mathcal{P}$ is the space
$$H_{1}\left(\mathcal{P}\right):=\overline{\mathrm{span}}\left\{ P_{A}-\mu\left(A\right):A\in\mathcal{B}_{\mu}\right\} \subset L_{\mathbb{R}}^{2}\left(\Omega,\mathcal{F},\mathbb{P}\right).$$
As part of the Fock space structure associated with $\mathcal{P}$, there is an isometric isomorphism of Hilbert spaces
$$I_{\mu}:L_{\mathbb{R}}^{2}\left(X,\mathcal{B},\mu\right)\to H_{1}\left(\mathcal{P}\right),\quad I_{\mu}:f\mapsto I_{\mu}\left(f\right),$$
that is given by a stochastic integral against $\mathcal{P}$ in an appropriate sense. Recalling the space
$$H\left(\mathcal{P}\right)=\overline{\mathrm{span}}\left\{ P_{A}:A\in\mathcal{B}_{\mu}\right\} \subset L_{\mathbb{R}}^{2}\left(\Omega,\mathcal{F},\mathbb{P}\right),$$
the first chaos is its direct summand,
$$H\left(\mathcal{P}\right)=H_{1}\left(\mathcal{P}\right)\oplus\mathbb{R}.$$
For every unital positivity preserving unitary operator $U$ of $H\left(\mathcal{P}\right)$ we have
$$\left\langle U\left(I_{\mu}\left(f\right)\right),1\right\rangle =\left\langle I_{\mu}\left(f\right),U^{-1}\left(1\right)\right\rangle =\left\langle I_{\mu}\left(f\right),1\right\rangle ,\quad f\in L_{\mathbb{R}}^{2}\left(X,\mathcal{B},\mu\right).$$
Thus, $U$ preserves $H_{1}\left(\mathcal{P}\right)$ as a direct summand of $H\left(\mathcal{P}\right)$. Since $I_{\mu}$ is an isometric isomorphism of Hilbert spaces, the conjugation by $I_{\mu}$ forms a map between the unitary groups,
\begin{equation}
\label{eq: stochastic operator}
\mathrm{U}\left(H\left(\mathcal{P}\right)\right)\to\mathrm{U}\left(L_{\mathbb{R}}^{2}\left(X,\mathcal{B},\mu\right)\right),\quad U\mapsto U^{\mu}:=I_{\mu}^{-1}\circ U\circ I_{\mu}.
\end{equation}
This map will be important to the proof of Theorem \ref{Theorem: Poissonian actions II}. The following lemma deals with its fundamental properties, with its proof following a similar approach to the proof of \cite[Proposition 4.4]{roy2009poisson}.

\begin{lem}
\label{Lemma: positivity preserving unitaries}
In the map $U\mapsto U^{\mu}$ as in \eqref{eq: stochastic operator}, unital positivity preserving unitary operators of $H\left(\mathcal{P}\right)$ are mapped to quasi-unital positivity preserving unitary operator of $L_{\mathbb{R}}^{2}\left(X,\mathcal{B},\mu\right)$.
\end{lem}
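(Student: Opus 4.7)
The plan is to deduce both properties of $U^{\mu}$ from a single characterization of the nonnegative elements of the affine image $\{I_{\mu}(f)+c\}\subset H(\mathcal{P})$, exploiting the decomposition $H(\mathcal{P})=H_{1}(\mathcal{P})\oplus\mathbb{R}$ that $U$ respects. Because $U$ is unital and preserves $H_{1}(\mathcal{P})$, one has the basic commutation
$$U\bigl(I_{\mu}(f)+c\bigr)=I_{\mu}(U^{\mu}f)+c$$
for every $f\in L_{\mathbb{R}}^{2}(X,\mathcal{B},\mu)$ and $c\in\mathbb{R}$.

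The main step, which I expect to be the chief technical obstacle, is the following characterization of the nonnegative cone: for $f\in L_{\mathbb{R}}^{1}(\mu)\cap L_{\mathbb{R}}^{2}(\mu)$ and $c\in\mathbb{R}$, the inequality $I_{\mu}(f)+c\geq 0$ $\mathbb{P}$-a.s.\ is equivalent to $f\geq 0$ $\mu$-a.e.\ together with $c\geq\int_{X}f\,d\mu$. For a simple $f=\sum_{i=1}^{k}c_{i}\mathbf{1}_{A_{i}}$ with disjoint $A_{i}\in\mathcal{B}_{\mu}$, the condition reads $\sum_{i}c_{i}(P_{A_{i}}-\mu(A_{i}))+c\geq 0$ on every event $\{P_{A_{1}}=n_{1},\ldots,P_{A_{k}}=n_{k}\}$, each of which has positive probability; this forces all $c_{i}\geq 0$ (otherwise driving some $n_{i_{0}}$ to infinity violates the inequality) and $c\geq\sum_{i}c_{i}\mu(A_{i})=\int f\,d\mu$ (setting all $n_{i}=0$). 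Conversely, when $f\geq 0$ and $c\geq\int f\,d\mu$, the identity $I_{\mu}(f)+c=\Phi(f)+(c-\int f\,d\mu)$ with the uncompensated Poisson integral $\Phi(f):=\int f\,dN\geq 0$ yields the inequality. The general case is obtained by approximating $f$ in $L^{1}\cap L^{2}$ by simple functions; for the converse direction one restricts the Poisson point process to a finite-measure window and uses the independence of $\mathcal{P}$ across disjoint pieces to reduce to the simple case.

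Given the characterization, the lemma follows. Let $f\in L_{\mathbb{R}}^{1}(\mu)\cap L_{\mathbb{R}}^{2}(\mu)$ with $f\geq 0$ and set $c=\int f\,d\mu$. The forward direction gives $I_{\mu}(f)+c\geq 0$ a.s., so the commutation relation and the positivity of $U$ yield $I_{\mu}(U^{\mu}f)+c\geq 0$ a.s. The converse direction then produces both $U^{\mu}f\geq 0$ $\mu$-a.e.\ and $\int U^{\mu}f\,d\mu\leq\int f\,d\mu$, in particular $U^{\mu}f\in L^{1}$. Applying the same reasoning to $U^{-1}$, which is also a unital positivity-preserving unitary of $H(\mathcal{P})$, and to the nonnegative function $U^{\mu}f$, produces the reverse inequality $\int f\,d\mu\leq\int U^{\mu}f\,d\mu$, and hence the quasi-unital property on the nonnegative cone. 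Positivity preservation extends to all nonnegative $f\in L_{\mathbb{R}}^{2}(\mu)$ by truncation along a $\mu$-exhaustion using the $L^{2}$-continuity of $U^{\mu}$, while the quasi-unital property extends to all of $L_{\mathbb{R}}^{1}(\mu)\cap L_{\mathbb{R}}^{2}(\mu)$ via $f=f^{+}-f^{-}$.
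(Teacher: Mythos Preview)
Your overall strategy is sound and genuinely different from the paper's: the paper routes the argument through the structure theory of infinitely divisible Poissonian distributions (Proposition~\ref{Proposition: First chaos}(3), relying on \cite[Theorem 24.7, Corollary 24.8]{sato1999levy}), whereas you work directly with the combinatorics of the Poisson point process. That is a nice simplification in spirit. However, there is a real gap in the argument as written.

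You state the characterization ``$I_{\mu}(f)+c\geq 0$ a.s.\ $\Longleftrightarrow$ $f\geq 0$ and $c\geq\int f\,d\mu$'' only for $f\in L^{1}\cap L^{2}$, and then apply its converse direction to $U^{\mu}f$. But at that stage $U^{\mu}f$ is merely an element of $L^{2}$; the conclusion ``in particular $U^{\mu}f\in L^{1}$'' is precisely what is at stake, so invoking a statement whose hypothesis is $L^{1}\cap L^{2}$ is circular. This is exactly the point the paper's IDP machinery is built to handle: for a general $h\in L^{2}$, boundedness from below of $I_{\mu}(h)$ forces $h\geq 0$ and $h\in L^{1}$.

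Your ``finite-measure window plus independence'' idea can in fact close this gap, but it needs to be carried out for $h\in L^{2}$, not approximated from within $L^{1}\cap L^{2}$. For positivity: if $\mu(h<-\delta)>0$, pick $B\subset\{h<-\delta\}$ of finite positive measure; then $I_{\mu}(h\,1_{B})$ is unbounded below (on $\{P_{B}=n\}$ it is at most $-n\delta-\int_{B}h\,d\mu$), and by independence from $I_{\mu}(h\,1_{B^{c}})$ this contradicts $I_{\mu}(h)\geq -c$. For integrability: once $h\geq 0$, take $A_{n}\uparrow X$ with $\mu(A_{n})<\infty$; on the positive-probability event $\{P_{A_{n}}=0\}$ one has $I_{\mu}(h\,1_{A_{n}})=-\int_{A_{n}}h\,d\mu$, and independence from $I_{\mu}(h\,1_{A_{n}^{c}})$ (which has mean zero) forces $\int_{A_{n}}h\,d\mu\leq c$, hence $\int h\,d\mu\leq c$ by monotone convergence. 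With these two steps your argument goes through, and is more elementary than the paper's; but as stated, the appeal to the characterization for $U^{\mu}f$ is unjustified.
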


The proof makes essential use of the properties of the Poisson stochastic integral $I_{\mu}$ as well as the fundamentals of infinitely divisible Poissonian (henceforth {\sc id}p) random variables. We present this in Appendix \ref{Appendix: The First Chaos of a Poisson Point process}.

\begin{proof}
Since $I_{\mu}$ is an isometric isomorphism of Hilbert spaces, $U^{\mu}$ is a unitary operator. Let us fix an arbitrary $f\in L_{\mathbb{R}}^{1}\left(X,\mathcal{B},\mu\right)\cap L_{\mathbb{R}}^{2}\left(X,\mathcal{B},\mu\right)$. As described in Proposition \ref{Proposition: First chaos}, we have the {\sc id}p random variable
$$W_{f}:=I_{\mu}\left(U^{\mu}f\right)+\int_{X}fd\mu=U\left(I_{\mu}\left(f\right)\right)+\int_{X}fd\mu=U\Big(\int_{X}fd\mathcal{P}\Big),$$
whose L\'{e}vy measure is given by
$$\ell_{U^{\mu}f}=\mu\mid_{\left\{U^{\mu}f\neq 0\right\}}\circ \left(U^{\mu}f\right)^{-1}.$$
Assume further that $f\geq 0$ so that also $\int_{X}fd\mathcal{P}\geq 0$ and, since $U$ is positivity preserving, also $W_{f}\geq 0$. From Proposition \ref{Proposition: First chaos}(3) it follows that
$$\mu\left(U^{\mu}f<0\right)=\ell_{U^{\mu}f}\left(\mathbb{R}_{<0}\right)=0\text{ and }\int_{X}U^{\mu}fd\mu=\int_{\mathbb{R}_{\geq 0}}td\ell_{U^{\mu}f}\left(t\right)<\infty.$$
The first property shows that $U^{\mu}$ preserves positivity for every $f$ in a dense subspace, hence it is positivity preserving. The second property shows that $U^{\mu}f\in L_{\mathbb{R}}^{1}\left(X,\mathcal{B},\mu\right)\cap L_{\mathbb{R}}^{2}\left(X,\mathcal{B},\mu\right)$, so that by Proposition \ref{Proposition: First chaos}(1),
$$I_{\mu}\left(U^{\mu}f\right)=\int_{X}U^{\mu}fd\mathcal{P}-\int_{X}U^{\mu}fd\mu.$$
Plugging this into the definition of $W_{f}$ and using that $W_{f}\geq 0$, we obtain
$$\int_{X}U^{\mu}fd\mu-\int_{X}fd\mu=\int_{X}U^{\mu}fd\mathcal{P}-W_{f}\leq\int_{X}U^{\mu}fd\mathcal{P}.$$
With $f$ being fixed, the left hand side is a constant, while the right hand side is a nonnegative {\sc id}p random variable that is obtained as a stochastic integral of an integrable function. It follows from \cite[Corollary 24.8]{sato1999levy} that the infimum of the right hand side (as a random variable) is zero, and we conclude that
$$\int_{X}U^{\mu}fd\mu\leq\int_{X}fd\mu.$$
Since the map \eqref{eq: stochastic operator} respects inverses, the same proof shows that the same inequality holds when $U^{\mu}$ is replaced by $\left(U^{\mu}\right)^{-1}$. Both inequalities readily imply that
$$\int_{X}U^{\mu}fd\mu=\int_{X}fd\mu,$$
hence $U^{\mu}$ is quasi-unital.
\end{proof}

\begin{proof}[Proof of Theorem \ref{Theorem: Poissonian actions II}]
Suppose $\mathbf{U}:G\to\mathrm{U}\left(H\left(\mathcal{P}\right)\right)$ is a unital positivity preserving unitary representation of a Polish group $G$ as in the proposition. We construct a continuous homomorphism along the following arrows
$$G\xrightarrow{\mathbf{U}}\mathrm{U}\left(H\left(\mathcal{P}\right)\right)\xrightarrow{\eqref{eq: stochastic operator}}\mathrm{U}\left(L_{\mathbb{R}}^{2}\left(X,\mathcal{B},\mu\right)\right)\xrightarrow{\text{Banach-Lamperti}}\mathrm{Aut}\left(X,\mathcal{B},\mu\right)$$
as follows. The first arrow is the given representation $g\mapsto U_{g}$. The second arrow is the map $U_{g}\mapsto U_{g}^{\mu}$ as in \eqref{eq: stochastic operator}, whose image lies in the closed subgroup of quasi-unital positivity preserving unitary operators by Lemma \ref{Lemma: positivity preserving unitaries}. Then on the image of the second arrow, the third arrow $U_{g}^{\mu}\mapsto T_{g}$ is given by Lemma \ref{Lemma: Banach-Lamperti}. Since all the arrows are continuous, the map $g\mapsto T_{g}$ constitutes an action $\mathbf{T}:G\curvearrowright\left(X,\mathcal{B},\mu\right)$. We now use Theorem \ref{Theorem: Poissonian actions I}, and take a Poissonian action $\mathbf{S}:G\curvearrowright\left(\Omega,\mathcal{F},\mathbb{P}\right)$ whose base action is $\mathbf{T}$. The equivariance property that relates $\mathbf{T}$ and $\mathbf{S}$ as in Definition \ref{Definition: base action} implies that
$$U_{g}\left(I_{\mu}\left(f\right)\right)=I_{\mu}\left(U_{g}^{\mu}f\right)=I_{\mu}\left(f\circ T_{g}^{-1}\right)=I_{\mu}\left(f\right)\circ S_{g},\quad g\in G,$$
for every $f\in L_{\mathbb{R}}^{1}\left(X,\mathcal{B},\mu\right)\cap L_{\mathbb{R}}^{2}\left(X,\mathcal{B},\mu\right)$. Thus, $\mathbf{U}$ is the Koopman representation of $\mathbf{S}$, which is a Poissonian action with base action $\mathbf{T}$ as required in the theorem.

It is clear that all actions whose Koopman representation is $\mathbf{U}$ are isomorphic, hence $\mathbf{S}$ is essentially unique. In order to see the essential uniqueness of $\mathbf{T}$, we note that if $\mathbf{T}'$ is another base action for $\mathbf{S}$ then
$$P_{T_{g}^{-1}\left(A\right)}=P_{A}\circ S_{g}=P_{T_{g}'^{-1}\left(A\right)}\quad\mathbb{P}\text{-a.e. for every }g\in G\text{ and }A\in\mathcal{B}.$$
However, just as in the uniqueness argument in the proof of Proposition \ref{Proposition: classical Poisson point process}, if for some $g\in G$ we have $T_{g}\neq T_{g}'$, then for $A\in\mathcal{B}_{\mu}$ for which $\mu\left(T_{g}^{-1}\left(A\right)\cap T_{g}'^{-1}\left(A\right)\right)=0$ it holds that
$$\mathbb{P}\big(P_{T_{g}^{-1}\left(A\right)}>0,P_{T_{g}'^{-1}\left(A\right)}=0\big)>0,$$
which is a contradiction. This completes the proof.
\end{proof}

\subsection{Ergodicity of Poissonian actions}

Here we prove Theorem \ref{Theorem: ergodicity}. Let us first recall some basic definitions. For an action $\mathbf{T}:G\curvearrowright\left(X,\mathcal{B},\mu\right)$ we denote the invariant σ-algebra
$$\mathcal{I}_{\mathbf{T}}:=\left\{A\in\mathcal{B}:\mu\left(A\triangle T_g^{-1}\left(A\right)\right)=0\text{ for every }g\in G\right\}.$$
We say that $\mathbf{T}$ is:
\begin{description}
    \item[$\bullet$ Null] for every $A\in\mathcal{I}_{\mathbf{T}}$ either $\mu\left(A\right)=0$ or $\mu\left(A\right)=\infty$;
    \item[$\bullet$ Ergodic] for every  $A\in\mathcal{I}_{\mathbf{T}}$ either $\mu\left(A\right)=0$ or $\mu\left(X\backslash A\right)=0$;
    \item[$\bullet$ Doubly Ergodic] the diagonal action $\mathbf{T}\otimes\mathbf{T}:G\curvearrowright\left(X\times X,\mathcal{B}\otimes\mathcal{B},\mu\otimes\mu\right)$ is ergodic;
    \item[$\bullet$ Weakly mixing] for every ergodic action $\mathbf{S}:G\curvearrowright\left(Y,\mathcal{C},\nu\right)$ of $G$, the diagonal action $\mathbf{T}\otimes\mathbf{S}:G\curvearrowright\left(X\times Y,\mathcal{B}\otimes\mathcal{C},\mu\otimes\nu\right)$ is ergodic. 
\end{description}

\begin{rem}
\label{Remark: weak mixing}
A few general remarks about those properties:
\begin{enumerate}
    \item Being null is equivalent to the non-existence of a $\mathbf{T}$-invariant probability measure that is absolutely continuous with respect to $\mu$.
    \item Double ergodicity and weak mixing are equivalent in probability preserving actions of general groups. For locally compact groups this is a classical fact, and it was pointed out to us by Benjy Weiss in a personal communication that this is true in full generality. Indeed, one implication is obvious, and the other was proved by Glasner \& Weiss in \cite[Theorem 2.1]{glasner2016weak}, that while is formulated for locally compact groups the proof holds in full generality. With the terminology of \cite[Definition 1.1]{glasner2016weak}, this can be seen by looking at the proofs of the implications $DE\implies EIC\implies EUC\implies WM$.
    \item As a consequence of the L\'{e}vy 0-1 Law, weak mixing is equivalent to that the infinite diagonal action $\mathbf{T}^{\otimes\mathbb{N}}:G\curvearrowright\left(X^{\mathbb{N}},\mathcal{B}^{\otimes\mathbb{N}},\mu^{\otimes\mathbb{N}}\right)$ is ergodic. This will be useful in the proof of Theorem \ref{Theorem: Diff spatial action}.
\end{enumerate}
\end{rem}

In proving Theorem \ref{Theorem: ergodicity} we will need the following key result that was established in \cite[Theorem 1]{parreau2015prime}. We will refer to the chaos structure of $L_{\mathbb{R}}^{2}\left(X^{\ast},\mathcal{B}^{\ast},\mu^{\ast}\right)$ that arises from its structure as the Fock space of $L_{\mathbb{R}}^{2}\left(X,\mathcal{B},\mu\right)$ as we introduce in Appendix \ref{Appendix: Chaos decomposition}.

\begin{thm}[Parreau \& Roy]
\label{Theorem: Parreau & Roy}
Let $\left(X,\mathcal{B},\mu\right)$ be a standard infinite measure space. Every orthogonal projection of $L^2\left(X^{\ast},\mathcal{B}^{\ast},\mu^{\ast}\right)$ that preserves its chaos structure and vanishes on the first chaos $H_{1}\left(\mathcal{N}\right)$, necessarily vanishes on all higher chaoses, i.e. it is the projection to the constants.
\end{thm}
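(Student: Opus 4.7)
The plan is to decompose $P = \bigoplus_{n\geq 0} P_n$, where $P_n$ denotes the restriction of $P$ to the $n$-th chaos $H_n(\mathcal{N})$ (which by the chaos-preservation hypothesis is an orthogonal projection on $H_n$ in its own right). The assumption is then $P_1 = 0$, and the goal is $P_n = 0$ for every $n\geq 1$. I would attempt to propagate the vanishing upward by induction on $n$, exploiting the Poisson product formula, which couples consecutive chaoses multiplicatively in a way that is specific to the Poissonian case and has no analogue in the Gaussian setting.

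Concretely, after identifying $H_n(\mathcal{N}) \cong L^{2}_{\mathrm{sym}}(X^n,\mu^{\otimes n})$ via the multiple Poisson stochastic integral $I_n$, I would invoke the Poisson product formula: for a sufficiently nice $f$ (bounded, of finite $\mu$-measure support, say), the product $I_1(f)\cdot I_{n-1}(f^{\otimes(n-1)})$ decomposes as the sum of the top-chaos term $I_n(f^{\otimes n})$, a middle term in $H_{n-1}$ arising from a ``diagonal contraction'' that is specific to the Poisson case, and a bottom term in $H_{n-2}$ coming from the usual $L^2$ contraction (the Wick correction). Assuming inductively $P_1 = \cdots = P_{n-1}=0$, one applies $P$ to this identity and uses self-adjointness to compute $\langle P\cdot I_n(f^{\otimes n}),\, I_n(h^{\otimes n})\rangle$ for suitable test vectors $h$. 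Exploiting the vanishing of the lower-chaos projections under the induction hypothesis, this inner product should reduce to expressions involving $P_1$ only, which is zero. Since symmetric rank-one tensors $f^{\otimes n}$ span a dense subspace of $H_n$ by polarization, this forces $P_n=0$.

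The principal obstacle is that a purely grading-preserving orthogonal projection has no a priori compatibility with the Wick/Poisson multiplicative structure, so the required coupling between the various $P_n$ must be extracted from self-adjointness together with the explicit product formula. The Poisson setting is essential here: in the Gaussian/Wick case, products of first-chaos elements only produce the top symmetric tensor and a scalar contraction, so the chaos-preservation hypothesis would decouple the $P_n$'s entirely and the analogous statement would simply be false. It is precisely the extra diagonal term in the Poisson product formula that transmits information from $P_1$ to $P_n$ and allows the induction to close. A minor but necessary technical point is ensuring that the inductive scheme reaches every $n$ starting from $P_1=0$; if some parity or rank obstruction appears when using only two-factor products, one can bootstrap by considering products of several first-chaos elements simultaneously, using the multilinear version of the product formula.
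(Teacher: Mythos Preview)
The paper does not prove this theorem: it is quoted from \cite[Theorem~1]{parreau2015prime} and used as a black box. So there is no ``paper's own proof'' to compare against; the relevant question is whether your sketch stands on its own.

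It does not, and the gap is structural rather than technical. Read literally, the statement you are trying to prove is false for \emph{arbitrary} orthogonal projections. Take $P$ to be the orthogonal projection of $L^{2}(X^{\ast},\mu^{\ast})$ onto $H_{0}(\mathcal{N})\oplus H_{2}(\mathcal{N})$. This $P$ preserves each chaos, vanishes on $H_{1}(\mathcal{N})$, and is manifestly not the projection to constants. The point is that a chaos-preserving orthogonal projection is nothing more than an arbitrary choice of projection $P_{n}$ on each $H_{n}$, with no relation whatsoever between different $n$'s. Your own caveat---``a purely grading-preserving orthogonal projection has no a priori compatibility with the Wick/Poisson multiplicative structure''---is exactly right, but you then claim the coupling can be ``extracted from self-adjointness together with the explicit product formula''. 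It cannot: if you run your computation honestly, writing $I_{n}(f^{\otimes n})$ via the product formula, applying $P$, and pairing against $I_{n}(h^{\otimes n})$, self-adjointness just hands you back $\langle I_{n}(f^{\otimes n}),P_{n}I_{n}(h^{\otimes n})\rangle$ on the other side, and the resulting identity is a tautology about the product formula, not a constraint on $P_{n}$.

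What makes the Parreau--Roy result true (and what the paper implicitly relies on) is that in the intended application $P=\pi_{\mathbf{T}^{\ast}}$ is a \emph{conditional expectation} onto $L^{2}$ of a sub-$\sigma$-algebra, not a bare Hilbert-space projection. Conditional expectations are positive, unital, and satisfy the module property $P(\phi\psi)=\phi\,P(\psi)$ for $\phi$ measurable with respect to the sub-$\sigma$-algebra; equivalently, the range of $P$ is closed under products of bounded functions. It is this algebraic/order structure, absent from your sketch, that couples the chaoses and lets one propagate $P_{1}=0$ upward. If you want to reconstruct the argument, you should take ``orthogonal projection'' to mean ``conditional expectation onto a sub-$\sigma$-algebra'' (as in the original reference), and then use either the module property directly on products of first-chaos elements, or the fact that the exponential (coherent) vectors $\sum_{n\ge0}\tfrac{1}{n!}I_{n}(f^{\otimes n})$ are genuine positive random variables on which a Markov projection must act compatibly. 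Without that extra hypothesis your inductive scheme cannot close, as the counterexample shows.
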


The following technical lemma was proved for a single transformation in \cite[$\mathsection$ 3.2]{roy2021}, and the following extension to general groups is straightforward.

\begin{lem}
\label{Lemma: chaos}
Let $\mathbf{T}:G\curvearrowright\left(X,\mathcal{B},\mu\right)$ be an action of a Polish group $G$, and consider the projection
$$\pi_{\mathbf{T}}:L_{\mathbb{R}}^{2}\left(X,\mathcal{B},\mu\right)\to L_{\mathbb{R}}^{2}\left(X,\mathcal{I}_{\mathbf{T}},\mu\right).$$
Then the projection
$$\pi_{\mathbf{T}^{\ast}}:L_{\mathbb{R}}^{2}\left(X^{\ast},\mathcal{B}^{\ast},\mu^{\ast}\right)\to L_{\mathbb{R}}^{2}\left(X^{\ast},\mathcal{I}_{\mathbf{T}^{\ast}},\mu^{\ast}\right).$$
preserves the chaos structure of $L_{\mathbb{R}}^{2}\left(X^{\ast},\mathcal{B}^{\ast},\mu^{\ast}\right)$.
\end{lem}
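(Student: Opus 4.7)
The plan is a commutation argument built on the Poisson Fock chaos decomposition. Recall from Appendix \ref{Appendix: Chaos decomposition} that
\[
L_{\mathbb{R}}^{2}(X^{\ast},\mathcal{B}^{\ast},\mu^{\ast}) = \bigoplus_{n \geq 0} H_{n}(\mathcal{N}),
\]
with $H_{0}(\mathcal{N}) = \mathbb{R}$. Writing $P_{n}$ for the orthogonal projection onto $H_{n}(\mathcal{N})$, to say that $\pi_{\mathbf{T}^{\ast}}$ preserves the chaos structure is exactly to say that $P_{n}\pi_{\mathbf{T}^{\ast}} = \pi_{\mathbf{T}^{\ast}}P_{n}$ for every $n \geq 0$. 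I would obtain this in two steps: first show that every Koopman operator $U_{g}^{\ast}$ of $\mathbf{T}^{\ast}$ preserves each chaos $H_{n}(\mathcal{N})$; then derive the commutation $P_{n}\pi_{\mathbf{T}^{\ast}} = \pi_{\mathbf{T}^{\ast}}P_{n}$ from this by elementary invariant-subspace reasoning.

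For the first step, the Poissonian property of $\mathbf{T}^{\ast}$ yields $U_{g}^{\ast}H(\mathcal{P}) = H(\mathcal{P})$, and unitality $U_{g}^{\ast}1 = 1$ then forces $U_{g}^{\ast}H_{1}(\mathcal{N}) = H_{1}(\mathcal{N})$. Since $T_{g}^{\ast}$ is a point transformation of $(X^{\ast},\mathcal{B}^{\ast},\mu^{\ast})$, the operator $U_{g}^{\ast}$ is multiplicative on bounded almost everywhere defined functions; together with the preservation of $H_{1}(\mathcal{N})$ and of the constants, this forces $U_{g}^{\ast}$ to preserve every higher chaos $H_{n}(\mathcal{N})$, since each $H_{n}(\mathcal{N})$ is intrinsically generated from $H_{1}(\mathcal{N})$ by multiple Poisson/Wick integrals as described in Appendix \ref{Appendix: Chaos decomposition}. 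Under the Fock identification this is simply the second-quantization identity.

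For the second step, the invariance just obtained reads $P_{n}U_{g}^{\ast} = U_{g}^{\ast}P_{n}$ for every $g \in G$ and every $n \geq 0$. Let $V$ denote the closed subspace of vectors fixed by all $U_{g}^{\ast}$, so that $\pi_{\mathbf{T}^{\ast}}$ is the orthogonal projection onto $V$. For any $v \in V$ and any $g \in G$ we have $U_{g}^{\ast}(P_{n}v) = P_{n}U_{g}^{\ast}v = P_{n}v$, so $P_{n}V \subseteq V$; since $P_{n}$ is self-adjoint, also $P_{n}V^{\perp} \subseteq V^{\perp}$, and hence $P_{n}\pi_{\mathbf{T}^{\ast}} = \pi_{\mathbf{T}^{\ast}}P_{n}$, which is the desired preservation of the chaos structure. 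The only delicate technical point is the multiplicativity assertion used in the first step, which is the essence of the single-transformation argument in \cite[$\mathsection$ 3.2]{roy2021}; its extension to Polish group actions requires no new idea since the verification is pointwise in $g \in G$.
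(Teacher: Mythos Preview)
Your proposal is correct and follows essentially the same approach as the paper: both arguments rest on the fact that each Koopman operator $U_{g}^{\ast}$ preserves every chaos $H_{n}(\mathcal{N})$ (the second-quantization identity $\mathrm{F}(U_{T_{g}})=U_{T_{g}^{\ast}}$ from Appendix~\ref{Appendix: Chaos decomposition}), and then derive that the projection onto the fixed vectors commutes with each $P_{n}$. Your formulation of the second step via $P_{n}V\subseteq V$ and $P_{n}V^{\perp}\subseteq V^{\perp}$ is exactly the paper's decomposition $\mathrm{Im}\,\pi_{\mathbf{T}^{\ast}}=\bigcap_{g}\ker(U_{g}^{\ast}-\mathrm{Id})=\bigoplus_{n}\bigl(\mathrm{Im}\,\pi_{\mathbf{T}^{\ast}}\cap H_{n}\bigr)$ and its orthogonal complement, just phrased in operator-commutation language; the detour through multiplicativity is unnecessary once you invoke the Fock identity directly.
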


\begin{proof}
We start with a single transformation $T\in\mathrm{Aut}\left(X^{\ast},\mathcal{B}^{\ast},\mu^{\ast}\right)$. Let $U_{T}$ be the Koopman operator of $T$ and set $\Psi_{T}=U_{T}-\mathrm{Id}$, noting that $\mathrm{Im}\pi_{T}=\ker\Psi_{T}$. Since $U_{T}$ preserves the chaos structure of $L_{\mathbb{R}}^{2}\left(X^{\ast},\mathcal{B}^{\ast},\mu^{\ast}\right)$ then so is $\Psi_{T}$, so that for every $f=\sum_{n=0}^{\infty}f_{n}\in L_{\mathbb{R}}^{2}\left(X^{\ast},\mathcal{B}^{\ast},\mu^{\ast}\right)$, where $f_n\in H_{n}\left(X,\mathcal{B},\mu\right)$ for every $n$, we have
$$\Psi_{T}={\textstyle \sum_{n=0}^{\infty}}\Psi_{T}f_{n}\text{ and }\Psi_{T}f_{n}\in H_{n}\left(X,\mathcal{B},\mu\right)\text{ for every }n.$$
It follows that $f\in\ker\Psi_{T}$ if and only if $\Psi_{T}f_{n}=0$ for every $n$, that is to say
$$\ker\Psi_{T}={\textstyle \bigoplus_{n=0}^{\infty}}\left(\ker\Psi_{T}\cap H_{n}\left(X,\mathcal{B},\mu\right)\right).$$
However, we have that $\mathrm{Im}\pi_{T}=\ker\Psi_{T}$ on each $H_{n}\left(X,\mathcal{B},\mu\right)$, so we obtain
\begin{equation}
    \label{eq: chaoe image}
    \mathrm{Im}\pi_{T}={\textstyle \bigoplus_{n=0}^{\infty}}\left(\mathrm{Im}\pi_{T}\cap H_{n}\left(X,\mathcal{B},\mu\right)\right).
\end{equation}
Since $\ker\pi_{T}$ is the orthogonal complement of $\mathrm{Im}\pi_{T}$ on each $H_{n}\left(X,\mathcal{B},\mu\right)$, we deduce that also
\begin{equation}
    \label{eq: chaoe kernel}
    \ker\pi_{T}={\textstyle \bigoplus_{n=0}^{\infty}}\left(\ker\pi_{T}\cap H_{n}\left(X,\mathcal{B},\mu\right)\right).
\end{equation}
Then \eqref{eq: chaoe image} and \eqref{eq: chaoe kernel} imply that $\pi_{T}$ preserves the chaos structure of $L_{\mathbb{R}}^{2}\left(X^{\ast},\mathcal{B}^{\ast},\mu^{\ast}\right)$, proving the lemma for a single transformation.

Now for an action $\mathbf{T}$, if we let $\Psi_{g}=\Psi_{T_{g}^{\ast}}=U_{T_{g}^{\ast}}-\mathrm{Id}$ for each $g\in G$, then by the same reasoning
$${\textstyle \bigcap_{g\in G}}\ker\Psi_{g}={\textstyle \bigoplus_{n=0}^{\infty}\bigcap_{g\in G}}\left(\ker\Psi_{g}\cap H_{n}\left(X,\mathcal{B},\mu\right)\right).$$
Since $\mathrm{Im}\pi_{\mathbf{T}}=\bigcap_{g\in G}\ker\Psi_{g}$ on each $H_{n}\left(X,\mathcal{B},\mu\right)$, it similarly follows that
$$\mathrm{Im}\pi_{\mathbf{T}}={\textstyle \bigoplus_{n=0}^{\infty}}\left(\mathrm{Im}\pi_{\mathbf{T}}\cap H_{n}\left(X,\mathcal{B},\mu\right)\right)$$
and then that
$$\ker\pi_{\mathbf{T}}={\textstyle \bigoplus_{n=0}^{\infty}}\left(\ker\pi_{\mathbf{T}}\cap H_{n}\left(X,\mathcal{B},\mu\right)\right).$$
This completes the proof of the lemma also for actions.
\end{proof}

\begin{proof}[Proof of Theorem \ref{Theorem: ergodicity}]
By Proposition \ref{Proposition: Poisson point process} and Theorem \ref{Theorem: Poissonian actions} it is sufficient to prove the theorem for Poisson suspensions $\mathbf{T}^{\ast}:G\curvearrowright\left(X^{\ast},\mathcal{B}^{\ast},\mu^{\ast}\right)$. If the action is null then the projection $\pi_{\mathbf{T}}$ vanishes on the first chaos $H_{1}\left(\mathcal{N}\right)$. From Lemma \ref{Lemma: chaos} and Theorem \ref{Theorem: Parreau & Roy} we obtain that $\text{Im}\pi_{\mathbf{T}}$ consists of constant functions, namely the Poisson suspension is ergodic. Conversely, if the action is not null so that $\mathcal{I}_{\mathbf{T}}$ contains a set $A$ with $0<\mu\left(A\right)<\infty$, then $N_{A}$ is a non-constant, $\mathbf{T}^{\ast}$-invariant function in $L^2\left(X^{\ast},\mathcal{B}^{\ast},\mu^{\ast}\right)$, so the Poisson suspension is not ergodic.

Let us now show that for the Poisson suspension ergodicity implies weak mixing, and in doing so we will use the previous part of the proof twice. The Poisson suspension being ergodic implies that $\mathbf{T}:G\curvearrowright\left(X,\mathcal{B},\mu\right)$ is null, and it is clear that in this case also the diagonal action
$$\mathbf{T}\otimes\mathrm{Id}:G\curvearrowright\left(X\times\left\{0,1\right\},\mu\otimes{\textstyle\frac{1}{2}}\left(\delta_{0}+\delta_{1}\right)\right)$$
is null, so that the Poisson suspension associated with this latter action is ergodic as well. However, this Poisson suspension, $\left(\mathbf{T}\otimes\mathrm{Id}\right)^{\ast}$, when is taken with respect to the product of the topology $\tau$ for which $X^{\ast}$ was defined with the discrete topology of $\left\{0,1\right\}$, is isomorphic to $\mathbf{T}^{\ast}\otimes\mathbf{T}^{\ast}:G\curvearrowright\left(X^{\ast}\times X^{\ast},\mathcal{B}^{\ast},\mathcal{B}^{\ast},\mu^{\ast}\otimes\mu^{\ast}\right)$, hence $\mathbf{T}^{\ast}$ is weakly mixing.
\end{proof}

\begin{rem}
\label{Remark: ergodicity}
It was suggested to us by Sasha Danilenko that, since the ergodicity of a probability preserving action of a Polish group is determined by the ergodicity of the action restricted to a countable dense subgroup, it is sufficient to prove Theorem \ref{Theorem: ergodicity} for countable (non-topological) groups. Now for countable groups, Theorem \ref{Theorem: ergodicity} can be deduced from known results mentioned in \cite[Facts 2.2 \& 2.4]{danilenko2023krieger}.
\end{rem}

\section{Spatial Poissonian actions}
\label{Part: Spatial Poissonian actions}

Our main object of study in this part is a more restrictive notion of measure preserving actions, namely spatial actions.

\subsection{Preliminaries: spatial actions and the point-realization problem}
\label{Section: Preliminaries: spatial actions and the point-realization problem}

\begin{dfn}
\label{Definition: spatial action}
A (measure preserving) {\bf spatial action} of a Polish group $G$ on a standard measure space $\left(X,\mathcal{B},\mu\right)$ is a Borel action $\mathbf{T}:G\curvearrowright\left(X,\mathcal{B}\right)$, $\mathbf{T}:\left(g,x\right)\mapsto T_{g}\left(x\right)$, such that $\mu\circ T_{g}^{-1}=\mu$ for every $g\in G$. We denote spatial actions by
$$\mathbf{T}:G\spa\left(X,\mathcal{B},\mu\right).$$
\end{dfn}

Every spatial action induces a (near/Boolean) action in an obvious way, and in this case it can be thought of as a {\it point-realization} of the resulting (near/Boolean) action. However, in general, not every action admits a point-realization, and this leads to the point-realization problem in ergodic theory, which revolves around whether a given action admits a point-realization. As it turns out, for important classes of Polish groups this problem has a striking solution.

A Polish group $G$ is said to possess the {\bf Mackey property} (following \cite{kechris2012dynamics}) if every finite action of $G$ admits a point-realization. The following classes of groups are known to possess the Mackey property:
\begin{description}
    \item[$\bullet$ Locally compact Polish groups] This is the celebrated Mackey-Ramsay Point-Realization Theorem \cite[Theorem 3.3]{ramsay1971}.
    \item[$\bullet$ Non-Archimedean groups] Polish groups with a base of clopen subgroups at the identity. This is Glasner \& Weiss' \cite[Theorem 4.3]{glasner2005spatial}.
    \item[$\bullet$ Groups of isometries of a locally compact metric space] closed subgroups of the group of isometries of a locally compact metric space, with the Polish topology of pointwise convergence. This is Kwiatkowska \& Solecki's \cite[Theorem 1.1]{kwiatkowska2011spatial}.
\end{description}

The class of groups of isometries of a locally compact metric space includes both, locally compact Polish groups and non-Archimedean groups, and to the best of our knowledge this is the largest class of Polish groups on which the Mackey property is known to hold (beyond Polish groups see \cite{danilenko2000point}).

The fact that there are Polish groups without the Mackey property was demonstrated by Becker for the Abelian group $L^{0}\left(\left[0,1\right],S^{1}\right)$ of measurable functions $\left[0,1\right]\to S^{1}$, identified up to equality on a Lebesgue-conull set, with the topology of convergence in measure (see \cite[Appendix A]{glasnertsirelsonweiss2005}, \cite[$\mathsection$7.1]{pestov2006} and the references therein). 

This was vastly generalized by Glasner, Tsirelson \& Weiss \cite[Theorem 1.1]{glasnertsirelsonweiss2005} in showing that the the Mackey property fails miserably for the class of {\it L\'{e}vy groups}, a class of groups that was studied by many following Gromov \& Milman (see \cite[$\mathsection$ 4]{pestov2006} and the references therein). This class includes, among others, the group $\mathrm{Aut}\left(X,\mathcal{B},\mu\right)$ itself with the weak topology; the unitary group $\mathrm{U}\left(\mathcal{H}\right)$ of a separable Hilbert space $\mathcal{H}$ with the strong operator topology; and, the aforementioned $L^{0}\left(\left[0,1\right],S^{1}\right)$. Thus, it was shown by Glasner, Tsirelson \& Weiss that L\'{e}vy groups admit no finite spatial actions whatsoever except for trivial ones, and a fortiori do not possess the Mackey property. There are also non-L\'{e}vy Polish groups that do not possess the Mackey property \cite[$\mathsection$6]{glasner2005spatial}, \cite{moore2012boolean}.

\begin{rem}
\label{Remark: trivial}
A spatial action is considered to be trivial if the set of fixed points of the action has full measure. Note that the set of fixed points of a Borel action of a Polish group is a Borel set. This is clearly true for Polish actions, and for Borel action this follows from the theorem of Becker \& Kechris \cite[$\mathsection$ 5.2]{beckerkechris1996} by which every Borel action is a Polish action with respect to some Polish topology that induces the given Borel σ-algebra.
\end{rem}

Every Poissonian action of a group that possesses the Mackey property, admits a point-realization that serves as a spatial Poissonian action. By a {\it spatial Poissonian action} we refer to a Poissonian action (as in Definition \ref{Definition: Poissonian action}) which is also a spatial action. Our goal here is to show that the point-realization problem in Poissonian actions can be solved without appealing to the Mackey property.

\subsection{Poisson random set}
\label{Section: Poisson random set}

For a standard Borel space $\left(X,\mathcal{B}\right)$, with every Polish topology $\tau$ that generates $\mathcal{B}$ is associated the {\bf Effros Borel space}
$$\mathbf{F}_{\tau}\left(X\right)=\left\{F\subset X:X\backslash F\in\tau\right\}.$$
This is a standard Borel space with the \textbf{Effros σ-algebra} $\mathcal{E}_{\tau}\left(X\right)$, that is generated by the sets
$$B_{O}:=\left\{ F\in \mathbf{F}_{\tau}\left(X\right):F\cap O\neq\emptyset\right\},\quad O\in\tau.$$
See e.g. \cite[Section (12.C)]{kechris2012classical}. A {\bf random closed set} is nothing but a probability measure on $\left(\mathbf{F}_{\tau}\left(X\right),\mathcal{E}_{\tau}\left(X\right)\right)$, namely an element of $\mathcal{M}_{1}\left(\mathbf{F}_{\tau}\left(X\right),\mathcal{E}_{\tau}\left(X\right)\right)$. A common way to construct random closed sets for locally compact topologies is the Choquet Capacity Theorem (see e.g. \cite[$\mathsection$1.1.3]{molchanov2017}), but here we shall construct the Poisson random closed set also for Polish topologies that may not be locally compact.

Let us denote by
$$\left(\mathbf{F}_{\tau}^{\ast}\left(X\right),\mathcal{E}_{\tau}^{\ast}\left(X\right)\right)\subset\left(\mathbf{F}_{\tau}\left(X\right),\mathcal{E}_{\tau}\left(X\right)\right)$$
the subspace of infinite sets in $\mathbf{F}_{\tau}\left(X\right)$ with its induced Effros σ-algebra. Note that $\mathbf{F}_{\tau}^{\ast}\left(X\right)$ is indeed an Effros-measurable set: fixing a countable base $\mathcal{O}$ for $\tau$, for every $n\in\mathbb{Z}_{\geq 0}$ the property $\#F\geq n$ of $F\in\mathbf{F}_{\tau}\left(X\right)$ is witnessed by the existence of pairwise disjoint $O_{1},\dotsc,O_{n}\in\mathcal{O}$ such that $F\in B_{O_{1}}\cap\dotsm\cap B_{O_{n}}$.

\begin{thm}[Poisson random set]
\label{Theorem: Poisson random set, revisited}
Let $\left(X,\mathcal{B}\right)$ be a standard Borel space. For every Polish topology $\tau$ that generates $\mathcal{B}$, there are random variables 
$$\left\{\Xi_{A}:A\in\mathcal{B}\right\}\text{ of the form }\Xi_{A}:\mathbf{F}_{\tau}^{\ast}\left(X\right)\to\mathbb{Z}_{\geq 0}\cup\left\{ \infty\right\},$$
and a one-to-one correspondence 
$$\mathcal{M}_{\sigma}^{\tau}\left(X,\mathcal{B}\right)\to\mathcal{M}_{1}\left(\mathbf{F}_{\tau}^{\ast}\left(X\right),\mathcal{E}_{\tau}^{\ast}\left(X\right)\right),\quad \mu\mapsto\mu_{\tau}^{\ast},$$
such that $\left\{\Xi_A:A\in\mathcal{B}\right\}$ forms a Poisson point process with base space $\left(X,\mathcal{B},\mu\right)$. Furthermore, for every $\mu$ the following properties hold.
\begin{enumerate}
    \item $\mu_{\tau}^{\ast}$ is supported on the class of $\tau$-discrete sets.\footnote{While the class of $\tau$-discrete sets is generally not Effros-measurable, by a theorem of Hurewicz it is co-analytic (see \cite[Theorem (27.5), Exercise (27.8)]{kechris2012classical}) hence universally measurable.}
    \item On the support of $\mu_{\tau}^{\ast}$ it holds that $\Xi_{A}\left(\cdot\right)=\#\left(A\cap\cdot\right)$ for every $A\in\mathcal{B}$.
\end{enumerate}
\end{thm}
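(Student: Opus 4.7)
My plan is to realize $\mu_\tau^*$ as the push-forward of the classical Poisson probability $(X^*,\mathcal{B}^*,\mu^*)$ from Proposition \ref{Proposition: classical Poisson point process} under the support map, after first building the universal random variables $\Xi_A$ via a Borel selection that depends only on $\tau$. Fix a countable base $\mathcal{O}$ for $\tau$. By the Kuratowski--Ryll-Nardzewski selection theorem there exist Borel functions $s_n:\mathbf{F}_\tau(X)\setminus\{\emptyset\}\to X$ such that $\{s_n(F):n\geq 1\}$ is dense in $F$ for each nonempty $\tau$-closed set $F$. For $A\in\mathcal{B}$ set
\begin{equation*}
\Xi_A(F):=\sum_{n\geq 1}\mathbf{1}_A\bigl(s_n(F)\bigr)\cdot\mathbf{1}\bigl\{s_n(F)\notin\{s_1(F),\dotsc,s_{n-1}(F)\}\bigr\}.
\end{equation*}
This is Effros-measurable and, whenever $F$ is $\tau$-discrete (hence countable with $F=\{s_n(F):n\geq 1\}$, because every isolated point lying in the closure of the dense sequence must occur in it), it reduces to $\#(F\cap A)$.

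Next, fix $\mu\in\mathcal{M}_\sigma^\tau(X,\mathcal{B})$ and, by Proposition \ref{Proposition: classical Poisson point process}, pick a copy of the classical Poisson point process $\{N_A\}$ on $(X^*,\mathcal{B}^*,\mu^*)$. Since $\mu(O)<\infty$ for each $O\in\mathcal{O}$, on a $\mu^*$-conull set one has $N_O(\omega)<\infty$ simultaneously for every $O\in\mathcal{O}$, so $\mathrm{supp}(\omega):=\{x\in X:\omega(\{x\})>0\}$ meets each base neighborhood finitely and is therefore $\tau$-locally finite, hence both $\tau$-closed and $\tau$-discrete; infiniteness is ensured by $\mu(X)=\infty$. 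Thus $\Phi:\omega\mapsto\mathrm{supp}(\omega)$ takes $\mu^*$-a.e.\ value in $\mathbf{F}_\tau^*(X)$, and its Effros-measurability reduces to the identities $\{\Phi\in B_O\}=\{N_O>0\}$ for $O\in\mathcal{O}$, which are immediate. Define $\mu_\tau^*:=\Phi_*\mu^*$. The selectors were built to satisfy $\Xi_A\circ\Phi=N_A$ on the conull set where $\Phi(\omega)$ is discrete, so the joint law of $\{\Xi_A\}$ under $\mu_\tau^*$ coincides with that of $\{N_A\}$ under $\mu^*$, and is therefore a Poisson point process with base space $(X,\mathcal{B},\mu)$. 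Properties (1) and (2) are then visible directly from the construction.

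For uniqueness of the correspondence $\mu\mapsto\mu_\tau^*$, any candidate $\nu$ realizing $\{\Xi_A\}$ as a Poisson point process with intensity $\mu$ must satisfy $\Xi_O<\infty$ $\nu$-a.s.\ for every $O\in\mathcal{O}$ and hence be concentrated on $\tau$-discrete sets; on that (universally measurable) class, the Borel map $F\mapsto(\Xi_O(F))_{O\in\mathcal{O}}$ is injective because $\mathcal{O}$ separates points and $F$ is $\tau$-closed, so $\nu$ is determined by the joint law of $(\Xi_O)_{O\in\mathcal{O}}$, which is in turn determined by $\mu$ through Proposition \ref{Proposition: Poisson point process}. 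Injectivity of $\mu\mapsto\mu_\tau^*$ is then automatic since $\mu(A)=\mathbb{E}_{\mu_\tau^*}[\Xi_A]$. The principal obstacle I anticipate is precisely the transfer step: showing that a $\mu^*$-typical simple counting measure has a $\tau$-closed $\tau$-discrete support, which is where the hypothesis $\mu\in\mathcal{M}_\sigma^\tau(X,\mathcal{B})$ enters crucially via the countable base $\mathcal{O}$ and a Borel--Cantelli-style argument; once this geometric regularity is secured, the Poisson and uniqueness statements are essentially transported from the classical model.
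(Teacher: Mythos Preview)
Your proof is correct, and it takes a genuinely different---and shorter---route than the paper's.

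The paper constructs the $\Xi_A$ essentially as you do (its Propositions \ref{Proposition: measurable injective selection} and \ref{Proposition: Effros random variables}), but then builds $\mu_\tau^\ast$ \emph{intrinsically} on the Effros space: it sets up a generating algebra from finite partitions of basic $\tau$-open sets, writes down a pre-measure on it by declaring the $\Xi_B$ to be independent Poissons, verifies $\sigma$-additivity by comparing with the classical Poisson process for an auxiliary locally compact topology $\vartheta$, and then invokes Hahn--Kolmogorov. A separate third part is needed to upgrade the Poisson law of $\Xi_A$ from basic open $A$ to arbitrary $A\in\mathcal{B}_\mu$, via an auxiliary measure $\mu_\tau^A$ and the Molchanov-type fact that a random closed set is determined by the values on the $B_O$'s. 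Uniqueness is obtained from that same fact.

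Your approach bypasses the extension machinery entirely: once you observe that for $\mu$-a.e.\ $\omega$ the support $\mathrm{supp}(\omega)$ is $\tau$-locally finite (because $\mu(O)<\infty$ for all $O$ in the countable base), you get $\tau$-closedness and $\tau$-discreteness for free, the map $\Phi$ is Effros-measurable via $\{\Phi\in B_O\}=\{N_O>0\}$, and $\Xi_A\circ\Phi=N_A$ on that conull set transports \emph{all} the Poisson structure at once, including the law of $\Xi_A$ for arbitrary Borel $A$. Your uniqueness argument via injectivity of $F\mapsto(\Xi_O(F))_{O\in\mathcal{O}}$ on discrete sets is also different from the paper's Molchanov citation, and self-contained. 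What the paper's construction buys is a more explicit picture of how the Poisson law sits inside the Effros $\sigma$-algebra, but for the stated theorem your push-forward argument is both sufficient and more economical.
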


In order to spot the inherent difficulty in defining the Poisson point process as a random closed set in $\mathbf{F}_{\tau}\left(X\right)$, it should be noted that as long as $\tau$ is not locally compact, then for a general $A\in\mathcal{B}$, the function
$$\mathbf{F}_{\tau}\left(X\right)\to\mathbb{Z}_{\geq 0}\cup\left\{\infty\right\},\quad F\mapsto\#\left(A\cap F\right),$$
may not be Effros-measurable unless $A\in\tau$. In fact, by a theorem of Christensen this function may fail to be Effros-measurable even if $A\in\mathbf{F}_{\tau}\left(X\right)$ \cite[Theorem (27.6)]{kechris2012classical}. In order to resolve this issue we use the following simple modification on the Kuratowski \& Ryll-Nardzewski's Selection Theorem.

\begin{prop}
\label{Proposition: measurable injective selection}
Let $\left(X,\mathcal{B}\right)$ be a standard Borel space. For every Polish topology $\tau$ that generates $\mathcal{B}$  there is a countable collection of mappings
$$\left\{\xi_{n}:n\in\mathbb{N}\right\}\text{ of the form }\xi_{n}:\mathbf{F}_{\tau}^{\ast}\left(X\right)\to X,$$
such that the following properties hold.
\begin{enumerate}
\item (Measurability) Each $\xi_{n}$ is Effros-measurable.
\item (Injectivity) For each $F\in \mathbf{F}_{\tau}^{\ast}\left(X\right)$, the mapping $\mathbb{N}\to X$ given by $n\mapsto \xi_{n}\left(F\right)$ is injective.
\item (Selectivity) For each $F\in \mathbf{F}_{\tau}^{\ast}\left(X\right)$, the set $\left\{\xi_{n}\left(F\right):n\in\mathbb{N}\right\}$ is a dense subset of $F$.
\end{enumerate}
We will refer to such $\left\{ \xi_{n}:n\in\mathbb{N}\right\}$ as a \textbf{measurable injective selection}.
\end{prop}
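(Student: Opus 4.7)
The plan is to apply the Kuratowski--Ryll-Nardzewski Selection Theorem to every member of a countable base of $\tau$ and then to modify the resulting family inductively so as to destroy coincidences. The fact that $F \in \mathbf{F}_{\tau}^{\ast}(X)$ is infinite will leave enough room at each step to pick a fresh point.

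Fix a countable base $\{O_k\}_{k \in \mathbb{N}}$ of $\tau$ consisting of nonempty sets. By Kuratowski--Ryll-Nardzewski (see \cite[Theorem 12.13]{kechris2012classical}), for each $k$ there is an Effros-measurable map $\sigma_k \colon B_{O_k} \to X$ with $\sigma_k(F) \in F \cap O_k$ for all $F \in B_{O_k}$, which we extend to $\mathbf{F}_{\tau}^{\ast}(X)$ by a constant value outside $B_{O_k}$. The defining property of a base immediately gives that $\{\sigma_k(F) : k \in \mathbb{N}\}$ is $\tau$-dense in $F$ for every non-empty $F$: any $\tau$-open neighbourhood of $x \in F$ contains some $O_k \ni x$, whence $\sigma_k(F) \in F \cap O_k$. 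Now set $\xi_1 := \sigma_1$ and, for $n \geq 2$,
\[
\xi_n(F) := \begin{cases} \sigma_n(F), & \sigma_n(F) \notin \{\xi_1(F), \ldots, \xi_{n-1}(F)\}, \\ \sigma_{m_F}(F), & \text{otherwise}, \end{cases}
\]
where $m_F$ is the smallest $m$ with $F \in B_{O_m}$ and $O_m \cap \{\xi_1(F),\ldots,\xi_{n-1}(F)\} = \emptyset$. Such an $m$ exists because $F$ is infinite: any $y \in F \setminus \{\xi_1(F),\ldots,\xi_{n-1}(F)\}$ can be separated from this finite set by a basic $\tau$-open neighbourhood, by Hausdorffness of $\tau$.

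The three required properties then follow almost immediately. Injectivity holds by design, since in the second clause $\sigma_{m_F}(F) \in O_{m_F}$ avoids the previously chosen points. Density is preserved because $\{\sigma_k(F)\}_k \subseteq \{\xi_n(F)\}_n$: whenever the first clause discards $\sigma_n(F)$, that very point equals some $\xi_i(F)$ with $i < n$ and so is already in the range. Effros-measurability of $\xi_n$ is proved by induction on $n$, since the decision event $\{\sigma_n(F) \in \{\xi_1(F),\ldots,\xi_{n-1}(F)\}\}$ is the Borel union of preimages of the diagonal of $X \times X$ under the measurable pairings $(\sigma_n,\xi_i)$, and the auxiliary $m_F$ is Effros-measurable because both conditions in its definition --- $F \in B_{O_m}$ and $\xi_i(F) \in X \setminus O_m$ --- are Effros-measurable for each fixed $m$. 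The main subtle point, as flagged in the excerpt, is that one cannot simply select from $F \setminus \{\xi_1(F),\ldots,\xi_{n-1}(F)\}$: this set need not be closed, and operations of the form $F \mapsto F \cap C$ for closed $C$ are not Effros-measurable in general (Christensen). Routing the argument through open supersets $O_m$ of the fresh point sidesteps the issue, appealing only to the Effros generators $B_O$.
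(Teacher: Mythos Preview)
Your overall strategy---start from Kuratowski--Ryll-Nardzewski and then inductively eliminate coincidences---is sound and close in spirit to the paper's proof. There is, however, a small but genuine gap concerning the ``extension by a constant''. When $F\notin B_{O_n}$, the value $\sigma_n(F)$ is your arbitrary constant, which need not lie in $F$; if this constant also differs from $\xi_1(F),\dotsc,\xi_{n-1}(F)$ (this happens already at $n=1$ whenever $F\cap O_1=\emptyset$, and recurs if the constants for different $k$ are chosen distinct), the first clause fires and produces $\xi_n(F)\notin F$, violating Selectivity. The fix is easy: add the requirement $F\in B_{O_n}$ to the hypothesis of the first clause, and for $n=1$ invoke the ``otherwise'' mechanism (smallest $m$ with $F\in B_{O_m}$). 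With this amendment your three verifications go through exactly as written.

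For comparison, the paper organises the same idea more economically. It takes the standard K--R--N selectors $\theta_k$ (which already satisfy $\theta_k(F)\in F$ for every nonempty $F$, so no extension issue arises) and simply \emph{thins} the sequence: $\xi_n(F):=\theta_{\pi_n(F)}(F)$, where $\pi_n(F)$ is the first index $k$ at which $\#\{\theta_1(F),\dotsc,\theta_k(F)\}=n$. This enumerates the set $\{\theta_k(F):k\in\mathbb{N}\}$ without repetitions, so injectivity and selectivity are automatic and only the measurability of $\pi_n$ needs checking. Your substitution via avoiding basic open sets is a legitimate alternative route, and your closing remark about why one cannot simply select from $F\setminus\{\xi_1(F),\dotsc,\xi_{n-1}(F)\}$ is a worthwhile observation that the paper leaves implicit.
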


\begin{proof}
By the Kuratowski \& Ryll-Nardzewski's Selection Theorem (see \cite[Theorem (12.13)]{kechris2012classical}) there is a measurable selection: mappings $\theta_{n}:\mathbf{F}_{\tau}\left(X\right)\backslash\left\{ \emptyset\right\} \to X$, $n\in\mathbb{N}$, satisfying the first and the third properties. For each $n\in\mathbb{N}$, restrict the mapping $\theta_{n}$ to $\mathbf{F}_{\tau}^{\ast}\left(X\right)$ and modify it into the mapping $\xi_{n}:\mathbf{F}_{\tau}^{\ast}\left(X\right)\to X$ by letting
$$\xi_{n}\left(F\right)=\theta_{\pi_{n}\left(F\right)}\left(F\right),\quad F\in \mathbf{F}_{\tau}^{\ast}\left(X\right),$$
where $\pi_{n}:\mathbf{F}_{\tau}^{\ast}\left(X\right)\to\mathbb{N}$ is given by
$$\pi_{n}\left(F\right)=\inf\left\{ k\in\mathbb{N}:\#\left\{ \theta_{1}\left(F\right),\dotsc,\theta_{k}\left(F\right)\right\} =n\right\}.$$
Clearly, $\left\{ \xi_{n}:n\in\mathbb{N}\right\}$ forms an injective selection, and an elementary proof by induction on $\pi_{n}$ and $\xi_{n}$ shows that each $\xi_{n}$ is Effros-measurable.
\end{proof}

\begin{prop}
\label{Proposition: Effros random variables}
Let $\left(X,\mathcal{B}\right)$ be a standard Borel space. For every Polish topology $\tau$ that generates $\mathcal{B}$ there are mappings
$$\left\{\Xi_{A}:A\in\mathcal{B}\right\}\text{ of the form }\Xi_{A}:\mathbf{F}_{\tau}^{\ast}\left(X\right)\to\mathbb{Z}_{\geq 0}\cup\left\{\infty\right\},$$
with the following properties:
\begin{enumerate}
    \item $\Xi_{A\cup B}=\Xi_{A}+\Xi_{B}$ for every disjoint $A,B\in\mathcal{B}$.
    \item Each $\Xi_{A}$ is Effros-measurable, and together $\left\{\Xi_{A}:A\in\mathcal{B}\right\}$ generate the Effros σ-algebra of $\mathbf{F}_{\tau}^{\ast}\left(X\right)$.
    \item The identity $\Xi_{A}\left(F\right)=\#\left(A\cap F\right)$ holds in the following cases:\\
    (i) $A\in\tau$ (and every $F$).\\
    (ii) There is $O\in\tau$ such that $A\subset O\in\tau$ and $\#\left(F\cap O\right)<\infty$.\\
    (iii) $A\in\mathcal{B}$ and $F$ is $\tau$-discrete.
\end{enumerate}
\end{prop}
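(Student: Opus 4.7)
The plan is to leverage the measurable injective selection $\{\xi_n:n\in\mathbb{N}\}$ provided by Proposition~\ref{Proposition: measurable injective selection} and to define, for every $A\in\mathcal{B}$ and $F\in\mathbf{F}_{\tau}^{\ast}(X)$,
$$\Xi_{A}(F):=\#\{n\in\mathbb{N}:\xi_{n}(F)\in A\}=\sum_{n=1}^{\infty}\mathbf{1}_{A}(\xi_{n}(F)).$$
Effros-measurability of each $\Xi_{A}$ is immediate from the Effros-measurability of the $\xi_{n}$'s together with the Borel-measurability of $\mathbf{1}_{A}$, and the finite additivity $\Xi_{A\cup B}=\Xi_{A}+\Xi_{B}$ for disjoint $A,B\in\mathcal{B}$ is a trivial consequence of the pointwise splitting of the sum.

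The substantive step is the identity $\Xi_{A}(F)=\#(A\cap F)$ in case (i), i.e.\ for $A\in\tau$. If $A\cap F$ is finite, say equal to $\{x_{1},\dotsc,x_{m}\}$, then using the Hausdorff property of $\tau$ one finds, for each $i$, a $\tau$-open $V_{i}\subset A$ with $V_{i}\cap F=\{x_{i}\}$; density of the selection forces some $\xi_{n}(F)\in V_{i}$ and this $n$ is unique by injectivity, so $\Xi_{A}(F)=m$. If $A\cap F$ is infinite, then for any $k$ one picks distinct $y_{1},\dotsc,y_{k}\in A\cap F$ and pairwise disjoint $\tau$-open $U_{1},\dotsc,U_{k}\subset A$ with $y_{i}\in U_{i}$; each $U_{i}\cap F$ is nonempty and relatively $\tau$-open in $F$, so contains some $\xi_{n_{i}}(F)$, with the $n_{i}$'s distinct by disjointness and injectivity, giving $\Xi_{A}(F)\ge k$ and hence $\Xi_{A}(F)=\infty$. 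Item (2) on generation of the Effros $\sigma$-algebra then follows at once, because the generators $B_{O}=\{\Xi_{O}\ge 1\}$ for $O\in\tau$ are visibly expressible in terms of the $\Xi_{A}$'s.

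The remaining cases reduce to (i) by a short inspection. For (iii), if $F$ is $\tau$-discrete then every singleton in $F$ is relatively open, so by density and injectivity the map $n\mapsto\xi_{n}(F)$ is a bijection between (an initial segment of) $\mathbb{N}$ and $F$; summing $\mathbf{1}_{A}$ along this bijection yields $\Xi_{A}(F)=\#(A\cap F)$ for every Borel $A$. For (ii), if $A\subset O\in\tau$ with $F\cap O$ finite, then (i) applied to $O$ yields a bijection between $\{n:\xi_{n}(F)\in O\}$ and $F\cap O$; restricting to those $n$ with $\xi_{n}(F)\in A\subset O$ gives $\Xi_{A}(F)=\#(A\cap F)$.

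The main obstacle is the infinite-count half of case (i): without local compactness of $\tau$, the Effros $\sigma$-algebra only sees intersections with open sets, and one must extract arbitrarily many \emph{distinct} selection points from an open slice of $F$. The resolution is elementary -- Hausdorff separation of finitely many points followed by density of $\{\xi_{n}(F)\}$ in $F$ -- but it is precisely the step where the injectivity afforded by Proposition~\ref{Proposition: measurable injective selection}, rather than a bare Kuratowski--Ryll-Nardzewski selection, is indispensable.
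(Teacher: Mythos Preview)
Your proof is correct and follows essentially the same approach as the paper: both define $\Xi_{A}(F)=\#\{n:\xi_{n}(F)\in A\}$ via the measurable injective selection of Proposition~\ref{Proposition: measurable injective selection}, derive measurability and additivity immediately, and recover the Effros generators as $B_{O}\cap\mathbf{F}_{\tau}^{\ast}(X)=\{\Xi_{O}>0\}$. The only difference is presentational: where you argue case~(3)(i) by an explicit Hausdorff-separation-and-density analysis, the paper compresses this into the single observation that for $A\in\tau$ and countable $C$ one has $\#(A\cap\overline{C})=\#(A\cap C)$ (applied with $C=\{\xi_{n}(F):n\in\mathbb{N}\}$ and $\overline{C}=F$), which is exactly what your argument establishes.
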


\begin{proof}
Pick a measurable injective selection $\left\{ \xi_{n}:n\in\mathbb{N}\right\}$ of $\mathbf{F}_{\tau}^{\ast}\left(X\right)$ as in Proposition \ref{Proposition: measurable injective selection}, and for $A\in\mathcal{B}$ put
$$\Xi_{A}\left(F\right)=\#\left\{ n\in\mathbb{N}:\xi_{n}\left(F\right)\in A\right\}=\sum\nolimits_{n\in\mathbb{N}}1_{\left\{ \xi_{n}\in A\right\} }\left(F\right).$$
Property (1) follows from the injectivity of the measurable injective selection. As for Property (2), the Effros-measurability of each $\Xi_{A}$ follows from the measurability of the measurable injective selection. We now make the observation that if $A\in\tau$ and $C\subset A$ is any countable set, then $\#\left(A\cap\overline{C}\right)=\#\left(A\cap C\right)$, where $\overline{C}$ denotes the closure of $C$, in the sense that they are either infinite together and otherwise both are finite and of the same cardinality. The same is true when $A\subset O\in\tau$ and $C\subset O$ is a finite set, for some $O\in\tau$. Thus, using the properties of measurable injective selection, this establishes Property (3)(i) and (ii), and in a straightforward way also Property (3)(iii). Finally, we note that by Property (3)(i) we have
$$B_{O}\cap\mathbf{F}_{\tau}^{\ast}\left(X\right)=\left\{ \Xi_{O}>0\right\} ,\quad O\in\tau,$$
and this completes the proof of the second part of Property (2).
\end{proof}

We can now prove Theorem \ref{Theorem: Poisson random set, revisited}. To this end we exploit the existence of the classical Poisson point process for locally compact Polish topologies as follows. Thus, let $\left(X,\mathcal{B},\mu\right)$ be a standard infinite measure space. Consider the classical construction of the Poisson point process: Following Proposition \ref{Proposition: classical Poisson point process}, pick a locally compact Polish topology $\vartheta$ for which $\mu$ is Radon, and let $\left(X_{\vartheta}^{\ast},\mathcal{B}_{\vartheta}^{\ast},\mu_{\vartheta}^{\ast}\right)$ with the random variables $N_{A}:\omega\mapsto\omega\left(A\right)$ for $A\in\mathcal{B}$. Consider the map
$$\Phi:X_{\vartheta}^{\ast}\to\mathbf{F}_{\vartheta}^{\ast}\left(X\right),\quad\Phi\left(\omega\right)=\mathrm{Supp}\left(\omega\right).$$
Since the Poisson point process consists of simple counting Radon measures, $\Phi$ is well-defined. It is injective and, since $\Phi^{-1}\left(B_{O}\right)=\left\{N_{O}>0\right\}$ for every $O\in\tau$, it is measurable. Define now
$$\Xi_{A}^{\vartheta}:\Phi\left(X_{\vartheta}^{\ast}\right)\subset \mathbf{F}_{\vartheta}^{\ast}\left(X\right)\to\mathbb{Z}_{\geq 0}\cup\left\{\infty\right\},\quad \Xi_{A}^{\vartheta}=N_A\circ\Phi^{-1},\quad A\in\mathcal{B}.$$
Evidently, the random variables $\left\{\Xi_{A}^{\vartheta}:A\in\mathcal{B}\right\}$ satisfies the condition of Proposition \ref{Proposition: Effros random variables}. Thus, the classical Poisson point process 
$$\left(X_{\vartheta}^{\ast},\mathcal{B}_{\vartheta}^{\ast},\mu_{\vartheta}^{\ast}\right)\text{ with }\left\{N_{A}:A\in\mathcal{B}\right\}$$
can be naturally identified via $\Phi$ with the Poisson random set
\begin{equation}
\label{eq:classicalPoissonset}
\left(\mathbf{F}_{\vartheta}^{\ast}\left(X\right),\mathcal{E}_{\vartheta}^{\ast}\left(X\right),\mu_{\vartheta}^{\ast}\right)\text{ with }\big\{\Xi_{A}^{\vartheta}:A\in\mathcal{B}\big\}.
\end{equation}

\begin{proof}[Proof of Theorem \ref{Theorem: Poisson random set}]
Let $\left(X,\mathcal{B},\mu\right)$ be a standard infinite measure space and by the assumption there is a Polish topology $\tau$ on $X$ for which $\mu\in\mathcal{M}_{\sigma}^{\tau}\left(X,\mathcal{B}\right)$. The proof will be divided into three part. In the first part we construct a generating algebra of sets $\mathbf{A}$ on $\mathbf{F}_{\tau}^{\ast}\left(X\right)$. In the second part we define $\mu_{\tau}^{\ast}$ by defining it as a pre-measure on $\mathbf{A}$ and extending using the Hahn-Kolmogorov Extension Theorem. In the third part we show the desired properties of $\mu_{\tau}^{\ast}$.

\subsubsection*{Part 1}

By the $\tau$-local-finiteness of $\mu$, there exists a countable base $\mathcal{O}=\left\{O_{1},O_{2},\dotsc\right\}$ for $\tau$ such that $\mu\left(O_{n}\right)<\infty$ for every $n$. Since $\tau$-closed sets are $G_{\delta}$-sets in $\tau$, by removing the largest $\tau$-open set which is $\mu$-null and restricting $\tau$ to the remaining $\tau$-closed set, we may assume that $\mu\left(O_{n}\right)>0$ for every $n$. For every $n$, let $\rho_{n}$ be the finest partition of $O_{1}\cup\dotsm\cup O_{n}$ that is generated by $\left\{O_{1},\dotsc,O_{n}\right\}$, hence every atom of $\rho_{n}$ is a $\tau$-open set with finite positive measure. Since $\mathcal{O}$ is a base for $\tau$, the ascending sequence of partitions $\left(\rho_{n}\right)_{n=1}^{\infty}$ converges to the partition into points of $X$, and the ascending sequence of σ-algebras $\left(\sigma\left(\rho_{n}\right)\right)_{n=1}^{\infty}$ generates $\mathcal{B}$ modulo $\mu$.

Fix $\left\{ \Xi_{A}:A\in\mathcal{B}\right\}$ as in Proposition \ref{Proposition: Effros random variables}. For every $A\in\mathcal{B}$ denote by $\Pi_{A}$ the partition of the measurable set $\left\{\Xi_{A}<\infty\right\}$, so that its atoms are $\left\{\Xi_{A}=k\right\}$, $k\in\mathbb{Z}_{\geq0}$. More generally, for every finite partition $\rho$ of a set $A\in\mathcal{B}$, denote by $\Pi_{\rho}$ the finest partition of $\left\{\Xi_{A}<\infty\right\}$ that is generated by the partitions $\Pi_{B}$, $B\in\rho$. For every $n$, recalling the partition $\rho_{n}$, we put
$$\Pi_{n}=\Pi_{\rho_{n}}.$$
Thus, $\rho_{n}$ is a finite partition of $O_{1}\cup\dotsm\cup O_{n}\subset X$ and $\Pi_{n}$ is an infinite partition of $\left\{\Xi_{O_{1}\cup\dotsm\cup O_{n}}<\infty\right\} \subset\mathbf{F}_{\tau}^{\ast}\left(X\right)$. Since the σ-algebra of $\mathbf{F}_{\tau}^{\ast}\left(X\right)$ is generated by $\left\{ \Xi_{O}:O\in\tau\right\}$, it follows that the ascending sequence of partitions $\left(\Pi_{n}\right)_{n=1}^{\infty}$ converges to the partition into points of $\mathbf{F}_{\tau}^{\ast}\left(X\right)$, and the ascending sequence of σ-algebras $\left(\sigma\left(\Pi_{n}\right)\right)_{n=1}^{\infty}$ generates the Effros σ-algebra of $\mathbf{F}_{\tau}^{\ast}\left(X\right)$. Thus, we obtain that
$$\mathbf{A}:=\sigma\left(\Pi_{1}\right)\cup\sigma\left(\Pi_{2}\right)\cup\dotsm$$
is an algebra of sets that generates its Effros σ-algebra  of $\mathbf{F}_{\tau}^{\ast}\left(X\right)$.

\subsubsection*{Part 2}

Observe that the atoms of each $\Pi_{n}$ are in one-to-one correspondence with all functions $\kappa:\rho_{n}\to\mathbb{Z}_{\geq0}$, $B\mapsto\kappa_{B}$, in that every such function corresponds to the nonempty atom
$${\textstyle \bigcap_{B\in\rho_{n}}}\left\{\Xi_{B}=\kappa_{B}\right\} \in\Pi_{n}.$$
To see why this is true, note that every atom $B\in\rho_{n}\subset\tau$ is a nonempty open set with $0<\mu\left(B\right)<\infty$, and thus $\mu\left(X\backslash B\right)=\infty$. Then pick a (closed) set $F'\subset B$ with $\#F'=\kappa_{B}$, and look at the closed set $F:=F'\cup\left(X\backslash B\right)$. By the properties of $\Xi_{B}$ we see that $\Xi_{B}\left(F\right)=\#\left(B\cap F\right)=\kappa_{B}$, namely $F\in\left\{\Xi_{B}=\kappa_{B}\right\}$. Since the atoms $\rho_{n}$ are disjoint, this shows that $\bigcap_{B\in\rho_{n}}\left\{\Xi_{B}=\kappa_{B}\right\}$ is nonempty for whatever choice of $\kappa$. 

Define $\mu_{0}^{\ast}$ on $\mathbf{A}$ as follows. For each $n$, we specify $\mu_{n}^{\ast}$ on $\Pi_{n}$ by letting  $\left\{ \Xi_{B},B\in\rho_{n}\right\} $ be independent and Poisson distributed with respective means $\mu\left(B\right)$, $B\in\rho_{n}$. Thus, we obtain a set function $\mu_{0}^{\ast}$ on $\mathbf{A}$ be letting $\mu_{0}^{\ast}\mid_{\sigma\left(\Pi_{n}\right)}=\mu_{n}^{\ast}$ for each $n$. It is not hard to show directly that $\mu_{0}^{\ast}$ is a consistent pre-measure on $\mathbf{A}$, but it would be shorter to utilize the existence of the classical Poisson point process with respect to some other Polish topology $\vartheta$ which is chosen to be locally compact and for which $\mu$ is a Radon, as is presented in \eqref{eq:classicalPoissonset}. First, for every $n$ and $\kappa:\rho_{n}\to\mathbb{Z}_{\geq 0}$ we have the obvious identity
\begin{equation}
    \label{eq: identity}
    \textstyle{\mu_{0}^{\ast}\big(\bigcap_{B\in\rho_{n}}\left\{ \Xi_{B}=\kappa_{B}\right\}\big)=\mu_{\vartheta}^{\ast}\big(\bigcap_{B\in\rho_{n}}\left\{ \Xi_{B}^{\vartheta}=\kappa_{B}\right\} \big)}
\end{equation}
We now observe that for each $n$, since $\Pi_{n}$ is a countable partition, $\sigma\left(\Pi_{n}\right)$ is nothing but all countable disjoint unions of the atoms of $\Pi_{n}$. Consequently, as the sequence of σ-algebras $\left(\sigma\left(\Pi_{n}\right)\right)_{n=1}^{\infty}$ is ascending, the algebra $\mathbf{A}$ consists of countable disjoint unions of atoms of the partitions $\left(\Pi_{n}\right)_{n=1}^{\infty}$. Then the family of identities \eqref{eq: identity} together with the fact that $\mu_{\vartheta}^{\ast}$ is a probability measure and in particular σ-additive, readily imply that $\mu_{0}^{\ast}$ is a consistent pre-measure on $\mathbf{A}$. Finally, by the Hahn-Kolmogorov Extension Theorem, $\mu_{0}^{\ast}$ extends to a genuine probability measure $\mu_{\tau}^{\ast}$ on $\mathbf{F}_{\tau}^{\ast}\left(X\right)$.

\subsubsection*{Part 3}

We now show that $\left\{\Xi_{A}:A\in\mathcal{B}\right\}$ forms a Poisson point process on the base space $\left(X,\mathcal{B},\mu\right)$. Let $A\in\mathcal{B}_{\mu}$ be arbitrary. For every $n$ denote
$$A_{n}=A\cap\left(O_{1}\cup\dotsm\cup O_{n}\right),$$
and define $\rho_{n}^{A}$ to be the finest partition of $O_{1}\cup\dotsm\cup O_{n}$ that is generated by $A_{n},O_{1},\dotsc,O_{n}$. While the sets $A_{n}$ may not be in $\tau$, since they are contained in $O_{1}\cup\dotsm\cup O_{n}$ we may apply Proposition \ref{Proposition: Effros random variables}(3)(ii). Thus, repeating the same construction as above when the partitions $\left(\rho_{n}\right)_{n=1}^{\infty}$ are replaced by the partitions $\left(\rho_{n}^{A}\right)_{n=1}^{\infty}$, we obtain a probability measure $\mu_{\tau}^{A}$ on $\mathbf{F}_{\tau}^{\ast}\left(X\right)$ with respect to which $\Xi_{A\cap O_{n}}$ is Poisson distributed with mean $\mu\left(A\cap O_{n}\right)$ for every $n$. Then by standard arguments it follows that $\Xi_{A}$ is Poisson distributed with mean $\mu\left(A\right)$ with respect to $\mu_{\tau}^{A}$. However, we evidently have that
$$\mu_{\tau}^{A}\left(B_{O_{n}}\right)=1-e^{-\mu\left(O_{n}\right)}=\mu_{\tau}^{\ast}\left(B_{O_{n}}\right),\quad n=1,2,\dotsc,$$
and the distribution of a random closed set in $\mathbf{F}_{\tau}\left(X\right)$ is determined by the probabilities of $B_{O}$, $O\in\mathcal{O}$ (see e.g. \cite[Theorem 1.3.20]{molchanov2017}, whose proof is valid also for a base for the topology), and it follows that $\mu_{\tau}^{A}=\mu_{\tau}^{\ast}$. This shows that $\left\{\Xi_{A}:A\in\mathcal{B}\right\}$ forms a Poisson point process with respect to $\mu_{\tau}^{\ast}$ and, moreover, $\mu_{\tau}^{\ast}$ is unique with respect to this property, establishing that the correspondence $\mathcal{M}_{\sigma}^{\tau}\left(X,\mathcal{B}\right)\to\mathcal{M}_{1}\left(\mathbf{F}_{\tau}^{\ast},\mathcal{E}_{\tau}^{\ast}\left(X\right)\right)$, $\mu\mapsto\mu_{\tau}^{\ast}$, is one-to-one.

Finally, in order to see that $\mu_{\tau}^{\ast}$ is supported on the class of $\tau$-discrete sets, note that ${\textstyle \bigcap_{n\geq 1}}\left\{ \Xi_{O_{n}}<\infty\right\}$ is an Effros-measurable subset of $\mathbf{F}_{\tau}^{\ast}\left(X\right)$ that consists only of $\tau$-discrete sets and, since $\mu_{\tau}^{\ast}\left(\Xi_{O_{n}}<\infty\right)=1$ for each $n\geq 1$, we deduce that it is a $\mu_{\tau}^{\ast}$-conull set.
\end{proof}

\subsection{Spatial Poisson suspensions}

In this section we prove Theorem \ref{Theorem: spatial Poisson suspension}.

Let $\left(X,\mathcal{B}\right)$ be a standard Borel space, and $\tau$ a Polish topology on $X$ that generates $\mathcal{B}$. For every $\tau$-homeomorphism $T$ of $X$ we define
$$T^{\ast}:\mathbf{F}_{\tau}^{\ast}\left(X\right)\to \mathbf{F}_{\tau}^{\ast}\left(X\right),\quad T^{\ast}\left(F\right):=T\left(F\right)=\left\{ T\left(x\right)\in X:x\in F\right\}.$$

\begin{lem}
\label{Lemma: T*1}
Let $\mu\in\mathcal{M}_{\sigma}\left(X,\mathcal{B}\right)$. If $T$ is a $\tau$-homeomorphism that preserves $\mu$, then $T^{\ast}$ is a transformation that preserves $\mu_{\tau}^{\ast}$ (as in Theorem \ref{Theorem: Poisson random set, revisited}).
\end{lem}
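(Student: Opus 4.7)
The plan is to show two things: first, that $T^{\ast}$ is a well-defined bi-measurable bijection of $\mathbf{F}_{\tau}^{\ast}\left(X\right)$, and second, that $\mu_{\tau}^{\ast}\circ \left(T^{\ast}\right)^{-1}=\mu_{\tau}^{\ast}$. The second part will be obtained using the uniqueness clause of Theorem \ref{Theorem: Poisson random set, revisited}.

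For the first part, since $T$ is a $\tau$-homeomorphism it maps $\tau$-closed sets to $\tau$-closed sets and preserves infinitude, so $T^{\ast}$ maps $\mathbf{F}_{\tau}^{\ast}\left(X\right)$ into itself, and it is a bijection with inverse $\left(T^{-1}\right)^{\ast}$. To show Effros-measurability, it suffices to check the generators $B_{O}$, $O\in\tau$, and we compute
$$\left(T^{\ast}\right)^{-1}\left(B_{O}\right)=\left\{ F\in\mathbf{F}_{\tau}^{\ast}\left(X\right):T\left(F\right)\cap O\neq\emptyset\right\} =\left\{ F\in\mathbf{F}_{\tau}^{\ast}\left(X\right):F\cap T^{-1}\left(O\right)\neq\emptyset\right\} =B_{T^{-1}\left(O\right)},$$
which is Effros-measurable since $T^{-1}\left(O\right)\in\tau$. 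Symmetrically, $\left(T^{-1}\right)^{\ast}$ is Effros-measurable as well.

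For the invariance of $\mu_{\tau}^{\ast}$, the plan is to show that $\left\{\Xi_{A}:A\in\mathcal{B}\right\}$ forms a Poisson point process with base space $\left(X,\mathcal{B},\mu\right)$ with respect to $\mu_{\tau}^{\ast}\circ\left(T^{\ast}\right)^{-1}$, and then apply the uniqueness in Theorem \ref{Theorem: Poisson random set, revisited}. The key calculation is that on the $\mu_{\tau}^{\ast}$-conull set of $\tau$-discrete sets (Theorem \ref{Theorem: Poisson random set, revisited}(1)), Proposition \ref{Proposition: Effros random variables}(3)(iii) gives
$$\Xi_{A}\circ T^{\ast}\left(F\right)=\#\left(A\cap T\left(F\right)\right)=\#\left(T^{-1}\left(A\right)\cap F\right)=\Xi_{T^{-1}\left(A\right)}\left(F\right)$$
for every $A\in\mathcal{B}$, since $T$ is a homeomorphism (and hence $T\left(F\right)$ is $\tau$-discrete whenever $F$ is). Consequently, for any $A\in\mathcal{B}$, under $\mu_{\tau}^{\ast}\circ\left(T^{\ast}\right)^{-1}$ the variable $\Xi_{A}$ has the same law as $\Xi_{T^{-1}\left(A\right)}$ under $\mu_{\tau}^{\ast}$, which by Theorem \ref{Theorem: Poisson random set, revisited} is $\mathrm{Poiss}\left(\mu\left(T^{-1}\left(A\right)\right)\right)=\mathrm{Poiss}\left(\mu\left(A\right)\right)$, the last equality by $T$-invariance of $\mu$. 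The additivity condition $\Xi_{A\cup B}=\Xi_{A}+\Xi_{B}$ for disjoint $A,B\in\mathcal{B}$ holds pointwise by Proposition \ref{Proposition: Effros random variables}(1), so it is preserved under any push-forward. Hence $\left\{\Xi_{A}\right\}$ is a Poisson point process with base $\left(X,\mathcal{B},\mu\right)$ under $\mu_{\tau}^{\ast}\circ\left(T^{\ast}\right)^{-1}$, and the uniqueness in Theorem \ref{Theorem: Poisson random set, revisited} forces $\mu_{\tau}^{\ast}\circ\left(T^{\ast}\right)^{-1}=\mu_{\tau}^{\ast}$.

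I do not anticipate a serious obstacle: the only subtle point is making sure one works on the $\mu_{\tau}^{\ast}$-conull set of $\tau$-discrete sets so that the identity $\Xi_{A}\left(F\right)=\#\left(A\cap F\right)$ is available for all $A\in\mathcal{B}$ simultaneously, which is exactly what Proposition \ref{Proposition: Effros random variables}(3)(iii) provides.
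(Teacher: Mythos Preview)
Your proof is correct, but it takes a somewhat different route from the paper's. The paper's argument is more economical: it checks only that $\mu_{\tau}^{\ast}\circ\left(T^{\ast}\right)^{-1}$ and $\mu_{\tau}^{\ast}$ agree on the generating sets $B_{O}$, $O\in\tau$, via the computation $\left(T^{\ast}\right)^{-1}\left(B_{O}\right)=B_{T^{-1}\left(O\right)}$ and $\mu_{\tau}^{\ast}\left(B_{O}\right)=1-e^{-\mu\left(O\right)}$, and then invokes the fact that a random closed set is determined by its hitting probabilities on open sets (Molchanov). This uses only $\Xi_{O}$ for \emph{open} $O$, where the identity $\Xi_{O}\left(F\right)=\#\left(O\cap F\right)$ holds for every $F$ by Proposition \ref{Proposition: Effros random variables}(3)(i), so no restriction to the discrete-set support is needed. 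Your approach instead verifies the full Poisson point process axioms for $\left\{\Xi_{A}:A\in\mathcal{B}\right\}$ under the pushforward and appeals to the uniqueness in Theorem \ref{Theorem: Poisson random set, revisited}; this requires the equivariance $\Xi_{A}\circ T^{\ast}=\Xi_{T^{-1}\left(A\right)}$ for all Borel $A$, which is why you must pass to the $\mu_{\tau}^{\ast}$-conull set of $\tau$-discrete sets. The trade-off: the paper's proof is shorter and avoids the discrete-set subtlety, while yours establishes along the way the equivariance relation that is exactly what is needed later in the proof of Theorem \ref{Theorem: spatial Poisson suspension}.
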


\begin{proof}
For every $O\in\tau$ we have
\begin{align*}
\mu_{\tau}^{\ast}\left(T^{\ast-1}\left(B_{O}\right)\right)
& = \mu^{\ast}\left(B_{T^{-1}\left(O\right)}\right)=\mu^{\ast}\left(\Xi_{T^{-1}\left(O\right)}>0\right)=1-e^{-\mu\left(T^{-1}\left(O\right)\right)} \\
& = 1-e^{-\mu\left(O\right)}=\mu^{\ast}\left(\Xi_{O}>0\right)=\mu_{\tau}^{\ast}\left(B_{O}\right),
\end{align*}
thus $\mu^{\ast}\circ T^{\ast-1}$ and  $\mu^{\ast}$ coincide on $B_{O}$, $O\in\tau$. Since the values on $B_{O}$, $O\in\tau$, determines random closed sets uniquely (see \cite[Theorem 1.3.20]{molchanov2017}) it follows that $T^{\ast}$ preserves $\mu^{\ast}$
\end{proof}

Note that $\left(T\circ S\right)^{\ast}=S^{\ast}\circ T^{\ast}$ whenever $T,S$ are $\tau$-homeomorphism, and in particular $T^{\ast}$ is invertible by $\left(T^{\ast}\right)^{-1}=\left(T^{-1}\right)^{\ast}$. Obviously, $\mathbf{F}_{\tau}^{\ast}\left(X\right)$ is $T^{\ast}$-invariant for whatever $\tau$-homeomorphism $T$ of $X$. Then from a $\tau$-Polish action $\mathbf{T}:G\curvearrowright\left(X,\mathcal{B}\right)$ we obtain the action
\begin{align}
\label{eq: Effros action}  \mathbf{T}^{\ast}:G\curvearrowright\left(\mathbf{F}_{\tau}^{\ast}\left(X\right),\mathcal{E}_{\tau}^{\ast}\left(X\right)\right),\quad \mathbf{T}^{\ast}:\left(g,F\right)\mapsto T_{g}^{\ast}\left(F\right).
\end{align}

\begin{lem}
\label{Lemma: T*2}
If $\mathbf{T}:G\curvearrowright\left(X,\mathcal{B}\right)$ is a $\tau$-Polish action then $\mathbf{T}^{\ast}:G\curvearrowright\left(\mathbf{F}_{\tau}\left(X\right),\mathcal{E}_{\tau}\left(X\right)\right)$, and in particular $\mathbf{T}^{\ast}:G\curvearrowright\left(\mathbf{F}_{\tau}^{\ast}\left(X\right),\mathcal{E}_{\tau}^{\ast}\left(X\right)\right)$ as in \eqref{eq: Effros action}, is a Borel action.
\end{lem}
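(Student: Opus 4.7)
The plan is to verify Borel measurability of the jointly defined map $\mathbf{T}^{\ast}\colon G\times\mathbf{F}_{\tau}(X)\to\mathbf{F}_{\tau}(X)$, $(g,F)\mapsto T_{g}(F)$; the identity and composition axioms for $\mathbf{T}^{\ast}$ being an action follow directly from those of $\mathbf{T}$ together with the elementary set-theoretic observation recorded immediately before the lemma. Since $\mathcal{E}_{\tau}(X)$ is generated by the sets $B_{O_{n}}$ where $\{O_{n}\}$ is any countable base of $\tau$, it is enough to show that each of the sets
$$\bigl(\mathbf{T}^{\ast}\bigr)^{-1}(B_{O_{n}})=\bigl\{(g,F)\in G\times\mathbf{F}_{\tau}(X):T_{g}(F)\cap O_{n}\neq\emptyset\bigr\}$$
is Borel in $G\times\mathbf{F}_{\tau}(X)$.

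The key step is to reduce the non-localised condition $T_{g}(F)\cap O_{n}\neq\emptyset$ to a countable disjunction of conditions on individual points of $F$. For this, I would invoke the Kuratowski--Ryll-Nardzewski Selection Theorem, exactly as at the beginning of the proof of Proposition \ref{Proposition: measurable injective selection}, to fix Borel selectors $\theta_{m}\colon\mathbf{F}_{\tau}(X)\setminus\{\emptyset\}\to X$ such that $\{\theta_{m}(F):m\in\mathbb{N}\}$ is dense in $F$ for every nonempty $F$. Because $T_{g}$ is a $\tau$-homeomorphism, the image $\{T_{g}(\theta_{m}(F))\}_{m}$ is then dense in $T_{g}(F)$, so the open set $O_{n}$ meets $T_{g}(F)$ if and only if it meets this countable dense subset, giving the identity
$$\bigl(\mathbf{T}^{\ast}\bigr)^{-1}(B_{O_{n}})=\bigcup_{m\in\mathbb{N}}\bigl\{(g,F):T_{g}(\theta_{m}(F))\in O_{n}\bigr\}.$$

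Each set in the union is Borel because the map $(g,F)\mapsto T_{g}(\theta_{m}(F))$ factors as $(g,F)\mapsto(g,\theta_{m}(F))\mapsto T_{g}(\theta_{m}(F))$, whose first leg is Borel by measurability of $\theta_{m}$ and whose second is jointly continuous by the hypothesis that $\mathbf{T}$ is a $\tau$-Polish action; the preimage of the open set $O_{n}$ is therefore Borel. A countable union of Borel sets is Borel, which proves the claim for $\mathbf{F}_{\tau}(X)$. The \emph{in particular} clause concerning $\mathbf{F}_{\tau}^{\ast}(X)$ is then immediate, since it is an Effros-measurable $\mathbf{T}^{\ast}$-invariant subspace on which the restricted action inherits Borel measurability.

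The main obstacle I expect is precisely the subtlety that motivated Section \ref{Section: Poisson random set}: outside the locally compact setting, the condition ``$F$ meets $A$'' is in general not Effros-measurable for $A\in\mathcal{B}$, and by Christensen's theorem it can fail even for $\tau$-closed $A$. The selector-plus-density trick sidesteps this because it tests intersection only with a basic $\tau$-open set $O_{n}$ and only via point evaluations, so one never leaves the safe zone in which $\{F:F\cap O\neq\emptyset\}$ is measurable by definition of $\mathcal{E}_{\tau}(X)$.
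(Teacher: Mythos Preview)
Your proposal is correct and follows essentially the same approach as the paper: both invoke Kuratowski--Ryll-Nardzewski selectors to rewrite $(\mathbf{T}^{\ast})^{-1}(B_{O})$ as a countable union indexed by the selectors, using that $O$ is open so density suffices. The only cosmetic difference is that the paper inserts a further decomposition over a countable base $\mathcal{U}$ of $\tau$ to express the preimage as a union of \emph{rectangles} $\{g:U\subset T_{g}^{-1}(O)\}\times\{F:\theta_{n}(F)\in U\}$ and then checks each factor separately, whereas you argue directly that $(g,F)\mapsto T_{g}(\theta_{m}(F))$ is Borel as a composition; your route is slightly shorter and avoids the side computation that $\{g:U\subset T_{g}^{-1}(O)\}$ is closed.
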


\begin{rem}
In the literature this fact is viewed as elementary (see e.g. \cite[$\mathsection$ 2.4, Example (ii)]{beckerkechris1996}, \cite[$\mathsection$ 3.3]{gao2008invariant}). It is worth mentioning that when $\tau$ is locally compact (and only then), the {\it Fell topology} on $\mathbf{F}_{\tau}\left(X\right)$ is Polish and generates $\mathcal{E}_{\tau}\left(X\right)$ (see \cite[Exercise (12.7)]{kechris2012classical}). It can be verified, with the assistance of \cite[Theorem 4.8.6]{srivastava2008course}, \cite[Exercise (9.16) i)]{kechris2012classical}, that in this case $\mathbf{T}^{\ast}$ is further a Polish action in the Fell topology.
\end{rem}

\begin{proof}
Evidently $\mathbf{T}^{\ast}$ is an action and we verify that $\mathbf{T}^{\ast}$ is a Borel map by showing that $\mathbf{T}^{\ast-1}\left(B_{O}\right)$ is measurable in $G\times\mathbf{F}_{\tau}^{\ast}\left(X\right)$ for each $O\in\tau$. Using the Kuratowski \& Ryll-Nardzewski's Selection Theorem (see \cite[Theorem (12.13)]{kechris2012classical}), fix a measurable selection $\left\{\theta_{n}:n\in\mathbb{N}\right\}$ for $\mathbf{F}_{\tau}\left(X\right)$, that is a countable collection of Effros-measurable mappings $\theta_{n}:\mathbf{F}_{\tau}\left(X\right)\to X$ such that $\left\{\theta_{n}\left(F\right):n\in\mathbb{N}\right\}$ is a (countable) dense subset of $F$ for each $F\in\mathbf{F}_{\tau}^{\ast}\left(X\right)\backslash\left\{\emptyset\right\}$. Fix also a countable base $\mathcal{U}$ for $\tau$.

Let $O\in\tau$ be arbitrary. For each $g\in G$ and $F\in\mathbf{F}_{\tau}\left(X\right)$, using that $T_{g}^{-1}\left(O\right)$ is $\tau$-open,
\begin{align*}
    T_{g}\left(F\right)\cap O\neq\emptyset
    & \iff F\cap T_{g}^{-1}\left(O\right)\neq\emptyset\\
    &\iff\exists_{n\in\mathbb{N}}\left[\theta_{n}\left(F\right)\in T_{g}^{-1}\left(O\right)\right]\\
    & \iff\exists_{n\in\mathbb{N}}\exists_{U\in\mathcal{U}}\left[\left(U\subset T_{g}^{-1}\left(O\right)\right)\wedge\left(\theta_{n}\left(F\right)\in U\right)\right],
\end{align*}
so we may write
$$\mathbf{T}^{\ast-1}\left(B_{O}\right)=\bigcup_{n\in\mathbb{N}}\bigcup_{U\in\mathcal{U}}\left\{ g\in G:U\subset T_{g}^{-1}\left(O\right)\right\} \times\left\{ F\in\mathbf{F}_{\tau}\left(X\right):\theta_{n}\left(F\right)\in U\right\}.$$
Each set $\left\{ F\in\mathbf{F}_{\tau}\left(X\right):\theta_{n}\left(F\right)\in U\right\}$ is clearly Effros-measurable. In order to see the measurability of each $\left\{ g\in G:U\subset T_{g}^{-1}\left(O\right)\right\}$, write
\begin{align*}
    \left\{ g\in G:U\not\subset T_{g}^{-1}\left(O\right)\right\} 
    & = \left\{ g\in G:T_{g}\left(U\right)\cap X\backslash O\neq\emptyset\right\}\\
    & =\bigcup\nolimits_{x\in X\backslash O}\left\{ g\in G:x\in T_{g}\left(U\right)\right\}.
\end{align*}
Since for each $x\in X$ the map $G\to X$, $g\mapsto T_{g}^{-1}\left(x\right)$, is continuous, the set $\left\{ g\in G:x\in T_{g}\left(U\right)\right\} $ is open. This readily implies that $\left\{ g\in G:U\subset T_{g}\left(O\right)\right\}$ is closed hence measurable.
\end{proof}

\begin{proof}[Proof of Theorem \ref{Theorem: spatial Poisson suspension}]
Given a locally finite Polish action $\mathbf{T}:G\spa\left(X,\mathcal{B},\mu\right)$ with respect to a Polish topology $\tau$, using the construction of Theorem \ref{Theorem: Poisson random set, revisited}, the construction of the action as in \eqref{eq: Effros action}, together with Lemmas \ref{Lemma: T*1} and \ref{Lemma: T*2}, we obtain the spatial action 
$$\mathbf{T}^{\ast}:G\spa\left(\mathbf{F}_{\tau}^{\ast}\left(X\right),\mathcal{E}_{\tau}^{\ast}\left(X\right),\mu_{\tau}^{\ast}\right).$$
We complete the proof by showing that this is a Poissonian action, whose base action is $\mathbf{T}:G\spa\left(X,\mathcal{B},\mu\right)$, with respect to the Poisson point process $\left\{\Xi_{A}:A\in\mathcal{B}\right\}$ that is defined on $\left(\mathbf{F}_{\tau}^{\ast}\left(X\right),\mathcal{E}_{\tau}^{\ast}\left(X\right),\mu_{\tau}^{\ast}\right)$ as in Theorem \ref{Theorem: Poisson random set, revisited}.

Recall that by Proposition \ref{Proposition: Effros random variables}, for every $A\in\mathcal{B}$, we have that $\Xi_{A}\left(F\right)=\#\left(A\cap F\right)$ whenever $F\in\mathbf{F}_{\tau}^{\ast}\left(X\right)$ is $\tau$-discrete. Since $\mu_{\tau}^{\ast}$ is supported on the class of $\tau$-discrete sets, it follows that for $F$ in a $\mu_{\tau}^{\ast}$-conull set,
$$\Xi_{A}\left(T^{\ast}\left(F\right)\right)=\#\left(A\cap T^{\ast}\left(F\right)\right)=\#\left(T^{-1}\left(A\right)\cap F\right)=\Xi_{T^{-1}\left(A\right)}\left(F\right).$$
This readily shows that $\mathbf{T}$ is a base action for $\mathbf{T}^{\ast}$ as in Definition \ref{Definition: base action}, completing the proof of Theorem \ref{Theorem: spatial Poisson suspension}.
\end{proof}

\section{Constructing spatial actions from nonsingular spatial actions}

Here we aim to establish Theorem \ref{Theorem: spatial actions}. Recall the Polish group
$$\mathrm{Aut}\left(X,\mathcal{B},\left[\mu\right]\right)$$
of the (equivalence classes of) nonsingular transformations of a standard measure space $\left(X,\mathcal{B},\mu\right)$. It is worth mentioning that similarly to $\mathrm{Aut}\left(X,\mathcal{B},\mu\right)$, also $\mathrm{Aut}\left(X,\mathcal{B},\left[\mu\right]\right)$ is a L\'{e}vy group \cite[Theorem 6.1]{giordano2007some}, \cite[$\mathsection$4.5]{pestov2006}. Let us start with the important construction of the Maharam Extension, which allows one to realize the nonsingular transformations of one space as measure preserving transformations of another space.

\begin{prop}[Maharam Extension]
For every standard (finite or infinite) measure space $\left(X,\mathcal{B},\mu\right)$ there exists a standard infinite measure space $\big(\widetilde{X},\widetilde{\mathcal{B}},\widetilde{\mu}\big)$, with a continuous embedding of Polish groups
$$\mathrm{Aut}\left(X,\mathcal{B},\left[\mu\right]\right)\hookrightarrow\mathrm{Aut}\big(\widetilde{X},\widetilde{\mathcal{B}},\widetilde{\mu}\big),\quad T\mapsto\widetilde{T}.$$
\end{prop}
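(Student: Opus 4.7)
The plan is to use the classical skew-product (Maharam) construction. Set $\widetilde{X}:=X\times\mathbb{R}$ with the product Borel structure $\widetilde{\mathcal{B}}:=\mathcal{B}\otimes\mathcal{B}(\mathbb{R})$, and equip it with the $\sigma$-finite measure
$$d\widetilde{\mu}(x,s):=d\mu(x)\otimes e^{s}\,ds.$$
Since $X\times\mathbb{R}$ is standard Borel and $\widetilde{\mu}$ is non-atomic and $\sigma$-finite, $\bigl(\widetilde{X},\widetilde{\mathcal{B}},\widetilde{\mu}\bigr)$ is a standard measure space. For every nonsingular $T\in\mathrm{Aut}(X,\mathcal{B},[\mu])$ introduce the log Radon-Nikodym cocycle
$$c_{T}(x):=\log\tfrac{d\mu\circ T}{d\mu}(x),$$
and define $\widetilde{T}:\widetilde{X}\to\widetilde{X}$ by $\widetilde{T}(x,s):=\bigl(T(x),\,s-c_{T}(x)\bigr)$.

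The first step is to verify that $\widetilde{T}$ preserves $\widetilde{\mu}$. This is a direct Fubini computation: pushing $\widetilde{\mu}$ forward by $\widetilde{T}^{-1}$ on a rectangle $A\times B$ produces the factor $\int_{T^{-1}(A)}e^{-c_{T}(x)}d\mu(x)\cdot\int_{B}e^{u}du$, and the definition of $c_{T}$ gives $e^{-c_{T}(x)}d\mu(x)=d(\mu\circ T)^{-1}$ so that $\int_{T^{-1}(A)}e^{-c_{T}}d\mu=\mu(A)$. The second step is the homomorphism property: the chain rule for Radon-Nikodym derivatives gives the cocycle identity $c_{ST}(x)=c_{T}(x)+c_{S}(T(x))$ $\mu$-a.e., and substituting this into the definition yields $\widetilde{S\circ T}=\widetilde{S}\circ\widetilde{T}$. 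Injectivity is immediate, since $\widetilde{T}=\mathrm{Id}_{\widetilde{X}}$ forces $T(x)=x$ $\mu$-a.e. on the first coordinate.

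The third and main step is continuity. Recall from the preliminaries that $\mathrm{Aut}(X,\mathcal{B},[\mu])$ carries the weak topology in which $T_{n}\to T$ if and only if $\mu(T_{n}A\triangle TA)\to 0$ for every $A\in\mathcal{B}_{\mu}$ and $\frac{d\mu\circ T_{n}}{d\mu}\to\frac{d\mu\circ T}{d\mu}$ in $\mu$-measure; in particular $c_{T_{n}}\to c_{T}$ in $\mu$-measure. Since $\widetilde{T}_{n}$ and $\widetilde{T}$ are measure preserving, weak convergence $\widetilde{T}_{n}\to\widetilde{T}$ in $\mathrm{Aut}\bigl(\widetilde{X},\widetilde{\mathcal{B}},\widetilde{\mu}\bigr)$ reduces to showing $\widetilde{\mu}\bigl(\widetilde{T}_{n}^{-1}(A\times I)\triangle\widetilde{T}^{-1}(A\times I)\bigr)\to 0$ on rectangles $A\times I$ with $A\in\mathcal{B}_{\mu}$ and $I\subset\mathbb{R}$ a bounded interval (which have finite $\widetilde{\mu}$-measure and generate the measure algebra). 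A short calculation writes this symmetric difference as an integral against $d\mu\otimes e^{s}ds$ that is controlled by (i) $\mu(T_{n}^{-1}A\triangle T^{-1}A)$, (ii) $\mu\otimes e^{s}ds$-mass of the set $\{(x,s):s-c_{T_{n}}(x)\in I,\,s-c_{T}(x)\notin I\}\cap(T^{-1}A\times\mathbb{R})$, and the symmetric term. The latter tends to $0$ by the in-measure convergence $c_{T_{n}}\to c_{T}$ and the uniform integrability of $e^{s}\mathbf{1}_{I'}$ over any translate of $I$ by a bounded amount, combined with dominated convergence after passing to a subsequence.

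The main obstacle is indeed this last step: one must marry the set-convergence on the base with the measure-convergence of the cocycles in the weighted product measure $e^{s}ds$, which is neither finite nor translation-invariant. The resolution is to localize $s$ to bounded intervals (where $e^{s}ds$ is a finite measure equivalent to Lebesgue measure), so that in-measure convergence of $c_{T_{n}}$ implies convergence of the relevant level sets in $\widetilde{\mu}$-measure; the rest is routine approximation and bookkeeping.
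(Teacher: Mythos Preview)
Your construction is exactly the classical Maharam skew product that the paper uses, and the paper's own proof consists of writing down the same formula $\widetilde{T}(x,t)=(T(x),t-\log\nabla_T(x))$ and declaring the verification ``straightforward''; you actually go further by sketching the measure-preservation, homomorphism, and continuity checks. One minor slip: in your Fubini computation the factor should be $e^{+c_T(x)}$ rather than $e^{-c_T(x)}$ (so that $\int_{T^{-1}A}e^{c_T}\,d\mu=(\mu\circ T)(T^{-1}A)=\mu(A)$), but the conclusion and the remainder of the argument are unaffected.
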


\begin{proof}
For a standard measure space $\left(X,\mathcal{B},\mu\right)$ define $\big(\widetilde{X},\widetilde{\mathcal{B}},\widetilde{\mu}\big)$ by
$$\widetilde{X}=X\times\mathbb{R},\quad\widetilde{\mathcal{B}}=\mathcal{B}\otimes\mathcal{B}\left(\mathbb{R}\right),\quad d\widetilde{\mu}\left(x,t\right)=d\mu\left(x\right)e^{t}dt.$$
Obviously, this is a standard infinite measure space. In order to define the desired embedding, let us denote the Radon-Nikodym cocycle by
$$\nabla:\mathrm{Aut}\left(X,\mathcal{B},\left[\mu\right]\right)\times X\to \left(0,\infty\right),\quad\nabla_{T}\left(\cdot\right)=\frac{d\mu\circ T^{-1}}{d\mu}\left(\cdot\right)\in L^{0}\left(X,\mathcal{B},\mu\right).$$
Note that this is only an {\it almost cocycle} in the sense that for every $T,S\in\mathrm{Aut}\left(X,\mathcal{B},\left[\mu\right]\right)$ it holds that
$$\nabla_{T\circ S}=\nabla_{T}\circ S\cdot\nabla_{S}\text{ in } L^{0}\left(X,\mathcal{B},\mu\right).$$
We now define the embedding $\mathrm{Aut}\left(X,\mathcal{B},\left[\mu\right]\right)\hookrightarrow\mathrm{Aut}\big(\widetilde{X},\widetilde{\mathcal{B}},\widetilde{\mu}\big)$ by
\begin{equation}
\label{eq: Maharam}T\mapsto\widetilde{T},\quad\widetilde{T}\left(x,t\right)=\left(T\left(x\right),t-\log\nabla_{T}\left(x\right)\right).
\end{equation}
It is straightforward to verify that this is a well-defined, continuous embedding of Polish groups.
\end{proof}

Suppose $G$ is a Polish group. A {\bf nonsingular} (Boolean) {\bf action} of $G$ on a standard measure space $\left(X,\mathcal{B},\mu\right)$ is a continuous (equivalently, measurable) homomorphism $\mathbf{T}:G\to\mathrm{Aut}\left(X,\mathcal{B},\left[\mu\right]\right)$. We denote such an action by
$$\mathbf{T}:G\curvearrowright\left(X,\mathcal{B},\left[\mu\right]\right).$$
A {\bf nonsingular spatial action} of $G$ on a standard measure space $\left(X,\mathcal{B},\mu\right)$ is a Borel action $\mathbf{T}:G\curvearrowright\left(X,\mathcal{B}\right)$, $\mathbf{T}:\left(g,x\right)\mapsto T_{g}\left(x\right)$, such that for every $g\in G$, $T_{g}$ is a nonsingular transformation of $\left(X,\mathcal{B},\mu\right)$. We denote such an action by
$$\mathbf{T}:G\spa\left(X,\mathcal{B},\left[\mu\right]\right).$$
Using the Maharam Extension construction, from every nonsingular action we obtain a (measure preserving) action, however this does not work in the spatial category since the Radon-Nikodym cocycle is merely an almost cocycle. When $G$ is locally compact, by the Mackey Cocycle Theorem the Radon-Nikodym cocycle admits a pointwise version ({\it strict version} in the terminology of \cite{becker2013cocycles}) and then the Maharam Extension does admit a point-realization. However, as it was shown by Becker \cite[Section E]{becker2013cocycles}, in general a strict version of the Radon-Nikodym cocycle may not exist.

\begin{proof}[Proof of Theorem \ref{Theorem: spatial actions}]
If $G$ admits a locally finite Polish action, then by Theorem \ref{Theorem: spatial Poisson suspension} the spatial Poisson suspension of this locally finite Polish action, with an appropriate Polish topology, is a finite spatial action. If $G$ admits a $\tau$-locally finite Polish nonsingular action $\mathbf{T}:G\spa\left(X,\mathcal{B},\left[\mu\right]\right)$, $\mathbf{T}:\left(g,x\right)\mapsto T_{g}\left(x\right)$, such that for each $g\in G$ the Radon-Nikodym derivatives $x\mapsto\nabla_{g}\left(x\right):=\nabla_{T_{g}}\left(x\right)$ is $\tau$-continuous, then the Maharam Extension
$$\widetilde{\mathbf{T}}:G\spa\big(\widetilde{X},\widetilde{\mathcal{B}},\widetilde{\mu}\big),\quad\widetilde{\mathbf{T}}:\left(g,\left(x,t\right)\right)\mapsto\left(T_{g}\left(x\right),t-\log\nabla_{g}\left(x\right)\right),$$
is a spatial action. In the coming discussion we omit the precise Polish topologies with respect to which the continuity properties are defined, as these are given by products of the obvious topologies on $G$, $X$ and $\mathbb{R}$. We further claim that this Maharam Extension is a locally finite Polish action. First, clearly the measure $\widetilde{\mu}$ is locally finite. In order to see that the action is Polish, namely that $\widetilde{\mathbf{T}}$ is a continuous map, we note that by the assumption it follows that $\widetilde{\mathbf{T}}$ is separately continuous, and by \cite[Theorem (9.14)]{kechris2012classical} it automatically follows that it is indeed jointly continuous. Having that $\widetilde{\mathbf{T}}$ is a locally finite Polish action with respect to $\widetilde{\tau}:=\tau\otimes\tau_{\mathbb{R}}$, where $\tau$ is the given Polish topology on $X$ and $\tau_{\mathbb{R}}$ is the usual topology of $\mathbb{R}$, again by Theorem \ref{Theorem: spatial Poisson suspension} we obtain that the spatial Poisson suspension
$$\widetilde{\mathbf{T}}^{\ast}:G\spa\big(\mathbf{F}_{\widetilde{\tau}}^{\ast}\big(\widetilde{X}\big),\mathcal{E}_{\widetilde{\tau}}^{\ast}\big(\widetilde{X}\big),\widetilde{\mu}_{\widetilde{\tau}}^{\ast}\big)$$
is a finite spatial action of $G$.
\end{proof}

\begin{proof}[Proof of Corollary \ref{Corollary: free spatial action}]
By \cite[Theorem 2.3]{kechris2022polish}, if $\mathbf{S}:G\spa\left(Y,\nu\right)$ is any finite spatial action that is faithful, the diagonal action on the infinite product $\mathbf{S}^{\mathbb{N}}:G\spa\left(Y^{\mathbb{N}},\nu^{\mathbb{N}}\right)$ is essentially free in the sense that there is a $\nu$-conull invariant subset of $Y^{\mathbb{N}}$ (where the invariance is pointwise) on which the action $\mathbf{S}$ is free. Evidently, this diagonal action is also a finite spatial action, so this completes the proof.
\end{proof}

\section{Diffeomorphism groups: classification and spatial actions}

Let $M$ be a compact smooth manifold. We will always assume that $M$ is $d$-dimensional Hausdorff connected and without boundary. Let a parameter $1\leq r\leq\infty$ be fixed, and denote by
$$\mathrm{Diff}^{r}\left(M\right)$$
the group of all $C^{r}$-diffeomorphisms from $M$ onto itself. It becomes a Polish group with the compact-open $C^{r}$-topology. Denote the connected component of the identity, as a normal subgroup, by
$$\mathrm{Diff}_{o}^{r}\left(M\right)\vartriangleleft\mathrm{Diff}^{r}\left(M\right).$$
It consists of the $C^{r}$-diffeomorphisms which are $C^{r}$-isotopic to the identity (see \cite[Corollary 1.2.2]{banyaga2013structure}). Diffeomorphism groups are locally connected (see \cite[Proposition 1.2.1]{banyaga2013structure}) so $\mathrm{Diff}_{o}^{r}\left(M\right)$ is a clopen subgroup of $\mathrm{Diff}^{r}\left(M\right)$, hence a non-locally compact Polish group in the subspace topology.

Observe that the local-connectedness of $\mathrm{Diff}^{r}\left(M\right)$ implies that it is a (not non-)Archimedean group. Indeed, non-Archimedean groups, admitting a base of clopen subgroups, are totally disconnected. Here we aim to show in the case of $r\neq d+1$ the stronger statement of Theorem \ref{Theorem: Diff}. To this end we exploit the following celebrated result that was established by Herman in the case when $M$ is a torus and $r=\infty$; by Thurston for every $M$ and $r=\infty$; and by Mather for every $M$ and $1\leq r<\infty$, $r\neq d+1$. See \cite[$\mathsection$2]{banyaga2013structure} and the references therein.

\begin{thm}[Herman, Thurston, Mather]
\label{Theorem: simplicity}
For all manifold $M$ as above, $\mathrm{Diff}_{o}^{r}\left(M\right)$ is a simple group whenever $r\neq d+1$.
\end{thm}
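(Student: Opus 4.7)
The plan is to follow the classical strategy that factors the theorem into three largely independent ingredients: an Epstein-style algebraic reduction that converts simplicity into a local perfectness question, a geometric fragmentation lemma, and the highly nontrivial vanishing theorem for the local model. I would not attempt to re-do the hard analytic step from scratch; instead I would isolate precisely what has to be proved in $\mathbb{R}^d$ and then cite the Mather/Thurston/Herman vanishing result as a black box, since this is exactly how the simplicity statement is usually packaged.

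First I would invoke Epstein's axiomatic framework. Let $G = \mathrm{Diff}_o^r(M)$ act on $M$. Epstein's theorem says that if a group $G$ of homeomorphisms of a space $M$ satisfies (i) $G$ acts transitively on a basis $\mathcal{U}$ of open sets (in the sense that for any $U,V \in \mathcal{U}$ there is $g\in G$ with $g(U)\subset V$), (ii) for every $U \in \mathcal{U}$ and every $g \in G$ there exists a cover by $\mathcal{U}$-sets such that $g$ is a product of elements each supported in one cover set (fragmentation), and (iii) $[G,G]$ is simple as an abstract group once one knows $G = [G,G]$, then any nontrivial normal subgroup of $G$ coincides with $[G,G]$. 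The verification of (i) for $G = \mathrm{Diff}_o^r(M)$ is standard: take $\mathcal{U}$ to be the embedded open $d$-balls, and use the fact that the identity component of $\mathrm{Diff}^r(M)$ acts transitively on ordered $k$-tuples of points in a connected manifold and, more strongly, can move any small embedded ball into any other via a compactly supported isotopy. So the first step is pure topology and soft.

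Next I would establish the fragmentation property. Given a $C^r$-isotopy $\{f_t\}_{t\in[0,1]}$ from $\mathrm{Id}$ to $f \in \mathrm{Diff}_o^r(M)$, choose a finite open cover $M = U_1 \cup \dots \cup U_N$ by coordinate balls together with a subordinate partition $0=t_0 < t_1 < \dots < t_K = 1$ fine enough that each intermediate diffeomorphism $f_{t_j}\circ f_{t_{j-1}}^{-1}$ is close to the identity. A standard partition-of-unity trick (or Palais' local slicing of isotopies) then writes each close-to-identity factor as a product of diffeomorphisms each supported in some $U_i$. This reduces everything to the compactly supported group $\mathrm{Diff}_c^r(\mathbb{R}^d)$.

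The decisive step is the perfectness assertion in the local model: for $r \neq d+1$, every compactly supported $f \in \mathrm{Diff}_c^r(\mathbb{R}^d)$ is a product of commutators in $\mathrm{Diff}_c^r(\mathbb{R}^d)$. This is where I would invoke Mather's theorem (for $1 \le r < \infty$, $r \neq d+1$) and Thurston's theorem (for $r = \infty$), whose content is the vanishing $H_1(\mathrm{Diff}_c^r(\mathbb{R}^d); \mathbb{Z}) = 0$; these proofs rest on classifying space arguments for Haefliger structures and Mather's delicate combinatorial/homological analysis, and the restriction $r \neq d+1$ enters essentially (it is the dimension at which the relevant comparison map between group and classifying-space homologies is not known to be a quasi-isomorphism). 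Granting this black box, a conjugation-and-fragmentation argument as in Banyaga upgrades local perfectness to perfectness of $G$ itself: any commutator $[f,g]$ with $f,g$ supported in a common ball lies in $[G,G]$, and by (i)+(ii) these generate $G$. Combining with Epstein's step yields simplicity.

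The main obstacle, unambiguously, is the local perfectness of $\mathrm{Diff}_c^r(\mathbb{R}^d)$; in particular the hypothesis $r \neq d+1$ has no soft explanation and must be inherited from the Mather/Thurston machinery. Everything else is a combination of standard fragmentation, transitivity on balls, and the Epstein formalism, and I would present those steps in full while treating the vanishing theorem as an external input, as is done in \cite{banyaga2013structure}.
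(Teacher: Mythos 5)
The paper offers no proof of this theorem: it is stated as an external, celebrated result (Herman for the torus with $r=\infty$, Thurston for general $M$ with $r=\infty$, Mather for $1\leq r<\infty$, $r\neq d+1$) with a pointer to \cite[\S 2]{banyaga2013structure}, so there is nothing in-paper to compare your argument against. Your outline is a faithful account of exactly the proof that citation packages --- Epstein's simplicity-of-the-commutator-subgroup formalism, fragmentation via a fine subdivision of an isotopy and a partition of unity, and the Mather/Thurston perfectness of $\mathrm{Diff}_c^r(\mathbb{R}^d)$ as the deep black box where the hypothesis $r\neq d+1$ genuinely enters --- so your proposal and the paper ultimately rest on the same external input, and your identification of the hard step is correct.
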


The second tool we need is the following characterization of groups of isometries of a locally compact metric space \cite[Theorem 1.2]{kwiatkowska2011spatial}.

\begin{thm}[Kwiatkowska \& Solecki]
\label{Theorem: Kwiatkowska Solecki}
A Polish group $G$ is a group of isometries of a locally compact metric space if and only if it possesses the following property:
\begin{quote}
Every identity neighborhood contains a closed subgroup $H$, such that $G/H$ is a locally compact space and the normalizer
$$N_{G}\left(H\right):=\left\{g\in G:gHg^{-1}=H\right\}\text{ is clopen.}$$
\end{quote}
\end{thm}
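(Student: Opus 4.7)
The plan is to prove the two implications separately. For the forward direction, realize $G$ as a closed subgroup of $\mathrm{Iso}(X, d)$ for some locally compact metric space, where $\mathrm{Iso}(X, d)$ carries the topology of pointwise convergence (equivalently, uniform convergence on compact sets). A basic identity neighborhood then has the form $U_{K, \epsilon} = \{g : d(gx, x) < \epsilon \text{ for all } x \in K\}$ with $K \subseteq X$ compact. Given any open $V \ni e$ in $G$, choose $K$ and $\epsilon > 0$ so that $U_{K, 2\epsilon} \cap G \subseteq V$, select a finite $\epsilon$-net $F \subseteq K$, and set $H := \mathrm{Stab}_G(F)$. The isometry property together with the triangle inequality immediately yields $H \subseteq V$.

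Two things remain to verify for this $H$. For local compactness of $G/H$, the orbit map $G/H \hookrightarrow X^n$ sending $gH$ to $(gx_1, \ldots, gx_n)$ is a continuous injection, and for closed subgroups of isometry groups of locally compact metric spaces one can show via an Effros-type dichotomy that such orbits are locally closed in $X^n$, hence locally compact in the subspace topology. For clopenness of $N_G(H)$, one observes $gHg^{-1} = \mathrm{Stab}_G(gF)$, so after possibly enlarging $F$ to ensure pointwise stabilizers distinguish finite subsets, $N_G(H)$ coincides with the setwise stabilizer $\mathrm{Stab}_G^{\mathrm{set}}(F)$; the latter is clopen because, inside the neighborhood $U_{K, \delta}$ with $\delta$ less than half the minimum separation among the points of $F$, any element that sends $F$ to itself as a set must induce a fixed permutation on $F$.

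For the backward direction, assume the property and construct $(X, d)$. Pick a decreasing neighborhood base $(V_n)$ at $e$ and closed subgroups $H_n \subseteq V_n$ with $G/H_n$ locally compact and $N_G(H_n)$ clopen. For each $n$, construct a bounded $G$-invariant compatible metric $d_n$ on $G/H_n$, then assemble $X = \{\ast\} \sqcup \bigsqcup_n X_n$, where $X_n$ is a bounded $G$-invariant piece of $G/H_n$ carrying the scaled metric $2^{-n} d_n$, glued by declaring $d(\ast, x) = 2^{-n}$ for $x \in X_n$. Provided each $X_n$ is totally bounded, the sets $\{\ast\} \cup \bigsqcup_{n \geq N} X_n$ give compact neighborhoods of $\ast$, so $X$ is locally compact. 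The group $G$ acts componentwise by isometries fixing $\ast$, and faithfulness follows from $\bigcap_n H_n \subseteq \bigcap_n V_n = \{e\}$.

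The main obstacle is the construction of the $G$-invariant compatible metric $d_n$ on $G/H_n$, which is precisely where the clopen normalizer hypothesis does its work. The key observation is that $N_G(H_n)/H_n$ is a Polish group that acts on the clopen neighborhood $N_G(H_n)/H_n$ of the identity coset in $G/H_n$ by right translation, commuting with the $G$-action, so a Birkhoff--Kakutani left-invariant metric on this Polish quotient provides a $G$-invariant metric on this clopen neighborhood; extending across $G$-translates and exploiting the clopenness to avoid boundary conflicts then yields the required invariant metric globally on $G/H_n$. A secondary obstacle is guaranteeing that the full isometry group of the assembled $(X, d)$ does not strictly exceed $G$; one handles this by encoding additional rigidity into each $X_n$ (for instance, by augmenting with marker points of distinct heights) so that every isometry of $X$ necessarily restricts to a $G$-translation on each component.
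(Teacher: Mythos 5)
This statement is not proved in the paper at all: it is imported verbatim as \cite[Theorem 1.2]{kwiatkowska2011spatial}, so there is no internal proof to compare against, and your proposal has to stand on its own as a proof of a genuinely deep theorem. It does not.

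In the forward direction the decisive step --- clopenness of $N_{G}\left(H\right)$ --- is wrong as argued. You reduce $N_{G}\left(H\right)$ to the setwise stabilizer of $F$ by claiming that, after enlarging $F$, $\mathrm{Stab}_{G}\left(gF\right)=\mathrm{Stab}_{G}\left(F\right)$ forces $gF=F$. This fails already for $G=\mathbb{R}$ acting on $X=\mathbb{R}$ by translations: every pointwise stabilizer of every finite set is trivial, so no enlargement can make stabilizers ``distinguish finite subsets,'' and your identification would compute $N_{G}\left(H\right)$ as the setwise stabilizer of $F$ (a finite, non-open set) when in fact $H=\left\{0\right\}$ is normal and $N_{G}\left(H\right)=\mathbb{R}$. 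Your clopenness argument for the setwise stabilizer is also incomplete: showing that elements of $U_{F,\delta}$ preserving $F$ setwise must fix it pointwise only identifies the two stabilizers near the identity; it does not produce a neighborhood of an arbitrary $g$ in the setwise stabilizer. Likewise, local compactness of $G/H$ is asserted via ``an Effros-type dichotomy'' giving locally closed orbits in $X^{n}$; Effros's theorem gives a dichotomy, not local closedness, and establishing that orbits of closed subgroups of $\mathrm{Iso}\left(X\right)$ are locally closed for $X$ locally compact is precisely where the Gao--Kechris structure theory enters. Both pillars of the forward direction are therefore unsupported.

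In the backward direction the assembly of $X$ breaks down: since $G$ acts transitively on $G/H_{n}$, there is no ``bounded $G$-invariant piece'' other than $\emptyset$ and the whole space, and $G/H_{n}$ need not be totally bounded (again $G=\mathbb{R}$, $H_{n}=\left\{e\right\}$), so gluing all the pieces to a single base point $\ast$ at distances $2^{-n}$ destroys local compactness at $\ast$. The standard repair is to place the pieces at mutual distance bounded below with no accumulation point, after which one must still verify that the image of $G$ is closed in $\mathrm{Iso}\left(X\right)$ (using that the sequence is Cauchy in the two-sided uniformity); you address neither point. Your construction of the $G$-invariant metric on $G/H_{n}$ via a left-invariant metric on the Polish group $N_{G}\left(H_{n}\right)/H_{n}$ transported over the clopen cosets of $N_{G}\left(H_{n}\right)$ is the one part of the sketch that is essentially sound. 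Finally, the ``secondary obstacle'' of the full isometry group exceeding $G$ is moot: the paper's definition of a group of isometries of a locally compact metric space already means a \emph{closed subgroup} of such an isometry group.
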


We can now prove Theorem \ref{Theorem: Diff}.

\begin{proof}[Proof of Theorem \ref{Theorem: Diff}]
Let $1\leq r\leq\infty$, $r\neq d+1$, be fixed. Since $\mathrm{Diff}_{o}^{r}\left(M\right)$ is a clopen subgroup of $\mathrm{Diff}_{o}^{r}\left(M\right)$, it is sufficient to show that $\mathrm{Diff}_{o}^{r}\left(M\right)$ is not a group of isometries of some locally compact metric space. Suppose otherwise toward a contradiction. Then by Theorem \ref{Theorem: Kwiatkowska Solecki} there exists a closed subgroup $H\leq\mathrm{Diff}_{o}^{r}\left(M\right)$ for which $\mathrm{Diff}_{o}^{r}\left(M\right)/H$ is a locally compact space and the normalizer $N_{\mathrm{Diff}_{o}^{r}\left(M\right)}\left(H\right)$ is clopen. Since $\mathrm{Diff}_{o}^{r}\left(M\right)/H$ is a locally compact space while $\mathrm{Diff}_{o}^{r}\left(M\right)$ is not, it follows that $H$, hence also $N_{\mathrm{Diff}_{o}^{r}\left(M\right)}\left(H\right)$, cannot be the trivial group. Since $N_{\mathrm{Diff}_{o}^{r}\left(M\right)}\left(H\right)$ is clopen and $\mathrm{Diff}_{o}^{r}\left(M\right)$ is connected, it follows that $N_{\mathrm{Diff}_{o}^{r}\left(M\right)}\left(H\right)=\mathrm{Diff}_{o}^{r}\left(M\right)$, namely $H$ is a normal subgroup of $\mathrm{Diff}_{o}^{r}\left(M\right)$. This contradicts Theorem \ref{Theorem: simplicity}.
\end{proof}

We move now to construct spatial actions of diffeomorphism groups.

\begin{proof}[Proof of Theorem \ref{Theorem: Diff spatial action}]
Let $M$ be a compact smooth manifold and pick a volume form $\mathrm{Vol}$ on $M$. In fact, the following construction works also when $M$ is merely paracompact and accordingly has an infinite total volume in $\mathrm{Vol}$; indeed, the compact-open $C^{r}$-topology on $\mathrm{Diff}^{r}\left(M\right)$ is still Polish and the Maharam Extension can be taken with respect to an infinite measure as well. Whether $M$ is compact or merely paracompact, look at the tautological action of the group $\mathrm{Diff}^{r}\left(M\right)$ on $M$ which is obviously nonsingular with respect to $\mathrm{Vol}$. Thus, we have a nonsingular locally finite Polish action
$$\mathbf{D}:\mathrm{Diff}^{r}\left(M\right)\spa\left(M,\mathrm{Vol}\right),\quad f.x=f\left(x\right).$$
The Radon-Nikodym cocycle of this action is the corresponding Jacobian, which is jointly continuous as a map $\mathrm{Diff}^{r}\left(M\right)\times M\to\left(0,\infty\right)$, hence by Theorem \ref{Theorem: spatial actions} we conclude that $\mathrm{Diff}^{r}\left(M\right)$ admits a nontrivial probability preserving spatial action. Using Corollary \ref{Corollary: free spatial action} we immediately obtain that there exists also such a free action.

In order to obtain also the ergodicity we look back at the steps in the construction as in the proof of Theorem \ref{Theorem: spatial actions}: starting with the locally finite Polish nonsingular action $\mathbf{D}$, we constructed the Maharam Extension $\widetilde{\mathbf{D}}$, from which we constructed the spatial Poisson suspension $\widetilde{\mathbf{D}}^{\ast}$ (with respect to the obvious Polish topology on $\widetilde{M}$). Then, in order to obtain a free action, we took the infinite diagonal product action $\big(\widetilde{\mathbf{D}}^{\ast}\big)^{\mathbb{N}}$. We then argue that the action $\big(\widetilde{\mathbf{D}}^{\ast}\big)^{\mathbb{N}}$ is ergodic. To this end we exploit the fact that the Maharam Extension $\widetilde{\mathbf{D}}$ is ergodic, which will be proved in Appendix \ref{Appendix: Maharam}, and then, as every infinite measure preserving action, the ergodicity of $\widetilde{\mathbf{D}}$ implies that it is null. From Theorem \ref{Theorem: ergodicity} it follows its Poisson suspension $\widetilde{\mathbf{D}}^{\ast}$ is weakly mixing. Recalling Remark \ref{Remark: weak mixing}(3), this readily implies that $\big(\widetilde{\mathbf{D}}^{\ast}\big)^{\mathbb{N}}$ is ergodic. This completes the proof of Theorem \ref{Theorem: Diff spatial action}.
\end{proof}

\subsection{Non-essentially countable orbit equivalence relations}

We introduce the necessary background to Corollary \ref{Corollary: Diff equivalence relations}. A general reference to the subject, with many references therein, is \cite{kechris2023theory}.

An equivalence relation $E$ on a standard Borel space $\left(X,\mathcal{B}\right)$ is a set $E\subset X\times X$ such that the condition $x\sim x'\iff\left(x,x'\right)\in E$ defines an equivalence relation on $X$. Such an equivalence relation is said to be {\bf Borel} if it is a Borel subset of $X\times X$, and {\bf countable} if each of its equivalence classes is countable. The Borel complexity of one equivalence relation relative to another can be tested by the possibility of producing a Borel reduction of the former into the latter in the following sense. A {\bf Borel reduction} of an equivalence relation $E$ on a standard Borel space $\left(X,\mathcal{B}\right)$ into an equivalence relation $F$ on a standard Borel space $\left(Y,\mathcal{C}\right)$, is a Borel function $f:X\to Y$ such that
$$\left(x,x'\right)\in E\iff \left(f\left(x\right),f\left(x'\right)\right)\in F\text{ for all }x,x'\in X.$$
An equivalence relation is said to be {\bf essentially countable} if it admits a Borel reduction into a countable Borel equivalence relation.

An important class of equivalence relations are {\bf orbit equivalence relations}. If $\mathbf{T}:G\curvearrowright\left(X,\mathcal{B}\right)$ is a Borel action of a Polish group $G$ on a standard Borel space $\left(X,\mathcal{B}\right)$, the associated orbit equivalence relation is
$$E_{\mathbf{T}}^{X}=\left\{\left(x,T_{g}\left(x\right)\right):x\in X,g\in G\right\}\subset X\times X.$$
It was shown by Kechris (see \cite[Theorem 4.10]{kechris1992countable}) that if $G$ is locally compact then $E_{\mathbf{T}}^{X}$ is always essentially countable, and he left as open question whether this property characterizes locally compact groups among the Polish groups (see \cite[Problem 1.2]{kechris2022polish}, \cite[Problem 4.16]{kechris2023theory} and the references therein).

\begin{proof}[Proof of Corollary \ref{Corollary: Diff equivalence relations}]
By Theorem \ref{Theorem: Diff spatial action}, every diffeomorphism group admits a free spatial action on a standard probability space. If the orbit equivalence relation of this action would be essentially countable, then by a theorem of Feldman \& Ramsay \cite[Theorem A]{feldman1985countable} (c.f. \cite[Theorem 2.1]{kechris2022polish}) it would follow that the acting group is locally compact, which is false, hence this orbit equivalence relation is non-essentially countable.
\end{proof}

\section{Open problems}

We revisit the question of Glasner, Tsirelson \& Weiss \cite[Question 1.2]{glasnertsirelsonweiss2005} of whether L\'{e}vy groups admit nonsingular spatial action. In their proof of the non-existence of probability preserving actions of L\'{e}vy groups, it was sufficient to show the non-existence of such \emph{Polish} actions. This reduction was possible thanks to a theorem of Becker \& Kechris \cite[$\mathsection$ 5.2]{beckerkechris1996}, by which every Borel action of a Polish group admits a Polish topology with respect to which it becomes a Polish action. In locally finite Polish actions, by definition, there exists such a topology that, simultaneously, makes the action Polish and the measure locally finite. It is then natural to ask about the following refinements of Becker \& Kechris' theorem:

\begin{question}
\label{Question: Radon model}
Let $\mathbf{T}:G\curvearrowright\left(X,\mathcal{B}\right)$ be a Borel action of a Polish group $G$, preserving an infinite measure $\mu$. When does there exist a Polish topology on $X$ with respect to which, simultaneously, $\mathbf{T}$ is Polish and $\mu$ is locally finite?
\end{question}

Diffeomorphism groups were shown, in Theorems \ref{Theorem: Diff} and \ref{Theorem: Diff spatial action}, to belong neither to the class of groups of isometries of a locally compact metric space nor the class of L\'{e}vy groups. Thus it is natural to ask:

\begin{question}
Do diffeomorphism groups possess the Mackey property?
\end{question}

The following related question was posed by Moore \& Solecki \cite[$\mathsection$4]{moore2012boolean}. For a compact smooth manifold $M$, let $C^{\infty}\left(M,S^{1}\right)$ be the group of $C^{\infty}$-functions from $M$ to the unit circle $S^{1}$ with pointwise multiplication and the compact-open $C^{\infty}$-topology. It was shown by Moore \& Solecki \cite[Theorem 1.1]{moore2012boolean} that the Mackey property fails for continuous functions from $M$ to $S^{1}$, and they ask about the Mackey property for $C^{\infty}\left(M,S^{1}\right)$, describing it as 'tempting to conjecture'.

The Mackey property of a Polish group refers to all of its actions at once. However, it is possible for a Polish group without the Mackey property to admit spatial actions. Indeed, the homeomorphism group of the circle $S^{1}$ does not possess the Mackey property by the aforementioned result of Moore \& Solecki, and yet it acts spatially and ergodically on $S^{1}$ with its Lebesgue measure by $f.z=f\left(1\right)z$. This suggests to deviate from the general Mackey property and raise the following problem that seems to be widely open:

\begin{problem}
Consider the following actions a Polish group may possess:\footnote{The notations were borrowed from the Krieger-type of countable groups actions, which in turn were borrowed from the classification of factors in von Neumann algebras.}
\begin{description}
    \item[$\bullet$ Type $\mathrm{II}_{1}$] Spatial ergodic probability preserving actions.
    \item[$\bullet$ Type $\mathrm{II}_{\infty}$] Spatial ergodic infinite measure preserving actions.
    \item[$\bullet$ Type $\mathrm{III}$] Spatial ergodic nonsingular actions without an absolutely continuous invariant measure.
\end{description}
Beyond the locally compact case, little is known about whether a given Polish group admits an action of Type $\mathrm{II}_{1}$, $\mathrm{II}_{\infty}$ or $\mathrm{III}$. A particular important case is Glasner, Tsirelson \& Weiss' question: L\'{e}vy groups do not admit actions of Type $\mathrm{II}_{1}$, but do they admit actions of Type $\mathrm{III}$? What about Type $\mathrm{II}_{\infty}$?\\
There are also classification aspects of this problem: does the admission of an action of a certain type imply the admission of an action of some other type? On the contrary, is there a Polish group that admits an action of a certain type but not an action of some other type?
\end{problem}

\begin{rem}
We comment on known results in the above problem
    \begin{enumerate}
        \item Groups of isometries of locally compact metric spaces admit actions of Types $\mathrm{II}_{\infty}$ (there is a Haar measure) and $\mathrm{II}_{1}$ (the Poisson suspension of the Haar measure, which is spatial by \cite[Theorem 1.1]{kwiatkowska2011spatial} or by Theorem \ref{Theorem: spatial Poisson suspension}). See \cite{danilenko2023krieger} with regard to Type $\mathrm{III}$.
        \item Diffeomorphism groups of compact smooth manifolds admit actions of Types $\mathrm{III}$ (tautologically), $\mathrm{II}_{\infty}$ (the Maharam Extension) and $\mathrm{II}_{1}$ (the spatial Poisson suspension as in Theorem \ref{Theorem: Diff spatial action}).
        \item The group $\mathrm{Homeo}_{+}\left(\left[0,1\right]\right)$ of orientation preserving homeomorphisms admits no Boolean action whatsoever by a theorem of Megrelishvili \cite[Theorem 3.1]{megrelishvili2001every} (see also \cite[Remark 1.7]{glasnertsirelsonweiss2005}).
        \item The Maharam Extension construction demonstrates that in certain cases, the admission of a Type $\mathrm{III}$ action implies the admission of a Type $\mathrm{II}_{\infty}$ action. The Poisson suspension construction demonstrates that in certain cases, the admission of a Type $\mathrm{II}_{\infty}$ action implies the admission of a Type $\mathrm{II}_{1}$ action. By Theorem \ref{Theorem: spatial actions}, these implications are valid when the actions possess appropriate continuity properties.
    \end{enumerate}
\end{rem}

\begin{appendix}

\section{The First Chaos of Poisson point processes}
\label{Appendix: The First Chaos of a Poisson Point process}

Fix a generative Poisson point process $\mathcal{P}=\left\{P_{A}:A\in\mathcal{B}\right\}$ that is defined on $\left(\Omega,\mathcal{F},\mathbb{P}\right)$ with base space $\left(X,\mathcal{B},\mu\right)$ as in Definition \ref{Definition: Poisson point process}. In the following we describe some fundamental properties of the {\bf first chaos} of $\mathcal{P}$, namely the real Hilbert space
$$H_{1}\left(\mathcal{P}\right):=\overline{\mathrm{span}}\left\{ P_{A}-\mu\left(A\right):A\in\mathcal{B}_{\mu}\right\} \subset L_{\mathbb{R}}^{2}\left(\Omega,\mathcal{F},\mathbb{P}\right).$$

\subsubsection{Fock space and Chaos decomposition}
\label{Appendix: Chaos decomposition}

The (real, symmetric) {\bf Fock space} associated with a Hilbert space $\mathcal{H}$ is, by definition, the Hilbert space
$$\mathrm{F}\left(\mathcal{H}\right)=\bigoplus\nolimits_{n=0}^{\infty}\mathcal{H}^{\odot n},$$
where $\mathcal{H}^{\odot 0}=\mathbb{R}$ and $\mathcal{H}^{\odot n}$ for $n\geq 1$ is the symmetric $n^{\mathrm{th}}$ tensor product of $\mathcal{H}$, i.e. its elements are the vectors in the usual tensor product $\mathcal{H}^{\otimes n}$ that are invariant to permutations of their coordinates, which are generated by elements of the form
$$u_{1}\odot\dotsm\odot u_{n}:=\frac{1}{n!}\sum_{\sigma\in\mathrm{Sym}\left(n\right)}u_{\sigma\left(1\right)}\otimes\dotsm\otimes u_{\sigma\left(n\right)},$$
and with the inner product that is given by
$$\left\langle u_{1}\odot\dotsm\odot u_{n},v_{1}\odot\dotsm\odot v_{n}\right\rangle _{\mathcal{H}^{\odot n}}:=\frac{1}{n!}\sum\nolimits_{\sigma\in\mathrm{Sym}\left(n\right)}\prod\nolimits_{j=1}^{n}\left\langle u_{j},v_{\sigma\left(j\right)}\right\rangle _{\mathcal{H}}.$$
Here the operation $\bigoplus$ denotes the operation of taking the Hilbert space obtained as the completion of the direct sum.

For the classical Poisson point process $\mathcal{N}$ that is defined on $\left(X^{\ast},\mathcal{B}^{\ast},\mu^{\ast}\right)$ with the base space $\left(X,\mathcal{B},\mu\right)$, the Fock space decomposition refers to the isometric isomorphism between $L_{\mathbb{R}}^{2}\left(X^{\ast},\mathcal{B}^{\ast},\mu^{\ast}\right)$ and $\mathrm{F}\left(L_{\mathbb{R}}^{2}\left(X,\mathcal{B},\mu\right)\right)$. See \cite{liebscher1994isomorphism} (see also \cite[$\mathsection$18]{last2017lectures}). Then for the given $\mathcal{P}$ as in Definition \ref{Definition: Poisson point process}, using Proposition \ref{Proposition: Poisson point process} we obtain an isometric isomorphism between $L_{\mathbb{R}}^{2}\left(\Omega,\mathcal{F},\mathbb{P}\right)$ and $\mathrm{F}\left(L_{\mathbb{R}}^{2}\left(X,\mathcal{B},\mu\right)\right)$. Thus, the Hilbert space $L_{\mathbb{R}}^{2}\left(\Omega,\mathcal{F},\mathbb{P}\right)$ has the {\bf chaos decomposition} into
\begin{equation}
\label{eq: chaos}
L_{\mathbb{R}}^{2}\left(\Omega,\mathcal{F},\mathbb{P}\right)=\bigoplus\nolimits_{n=0}^{\infty}H_{n}\left(\mathcal{P}\right),
\end{equation}
each $H_{n}\left(\mathcal{P}\right)$ is called the \emph{$n^{\mathrm{th}}$ chaos} with respect to $\mathcal{P}$. The following description of the chaos structure for an abstract $\mathcal{P}$ requires stochastic integration against $\mathcal{P}$ as a random measure, which is justified in Proposition \ref{Proposition: Poisson point process}.

\begin{itemize}
    \item $H_{0}\left(\mathcal{P}\right)=\mathbb{R}$.
    \item $H_{1}\left(\mathcal{P}\right):=\overline{\mathrm{span}}\left\{ P_{A}-\mu\left(A\right):A\in\mathcal{B}_{\mu}\right\} \subset L_{\mathbb{R}}^{2}\left(\Omega,\mathcal{F},\mathbb{P}\right)$.
    \item $\dotsm$
    \item $H_{n}\left(\mathcal{P}\right)=\overline{\mathrm{span}}\left\{ \int_{X^{\odot n}}1_{A}^{\odot n}d\left(\mathcal{P}-\mu\right)^{\odot n}:A\in\mathcal{B}_{\mu}\right\}\subset L_{\mathbb{R}}^{2}\left(\Omega,\mathcal{F},\mathbb{P}\right)$, where $X^{\odot n}$ is the set of sequences in $X^{n}$ consisting of $n$ disjoint elements.
\end{itemize}

With this chaos decomposition of $L_{\mathbb{R}}^{2}\left(\Omega,\mathcal{F},\mathbb{P}\right)$, the isometric isomorphism
$$I_{\mu}:L_{\mathbb{R}}^{2}\left(\Omega,\mathcal{F},\mathbb{P}\right)\to\mathrm{F}\left(L_{\mathbb{R}}^{2}\left(X,\mathcal{B},\mu\right)\right),$$
is given by stochastic integration against $\mathcal{P}$ as follows.
\begin{itemize}
    \item $I_{\mu}:\mathbb{R}f_{0}\to H_{0}\left(\mathcal{P}\right)$ is given by $I_{\mu}:cf_{0}\mapsto c$, where $\mathbb{R}f_{0}$ is a one-dimensional space that is spanned by a distinguish norm-one vector $f_{0}\in L_{\mathbb{R}}^{2}\left(X,\mathcal{B},\mu\right)$, called {\it vacuum vector}, whose specification will be unimportant for us.
    \item $I_{\mu}:L_{\mathbb{R}}^{2}\left(X,\mathcal{B},\mu\right)\to H_{1}\left(\mathcal{P}\right)$ is given by the stochastic integral
    $$I_{\mu}:1_{A}\mapsto \int_{X}1_{A}d\left(\mathcal{P}-\mu\right)=P_{A}-\mu\left(A\right),\quad A\in\mathcal{B}_{\mu}.$$
    \item $\dotsm$
    \item $I_{\mu}:L_{\mathbb{R}}^{2}\left(X,\mathcal{B},\mu\right)^{\odot n}\to H_{n}\left(\mathcal{P}\right)$ is given by the stochastic integral
    $$I_{\mu}:1_{A}^{\odot n}\mapsto\int_{X^{\odot n}}1_{A}^{\odot n}d\left(\mathcal{P}-\mu\right)^{\odot n},\quad A\in\mathcal{B}_{\mu}.$$
\end{itemize}

In general, every unitary operator $U$ of a Hilbert space $\mathcal{H}$ induces a unitary operator $\mathrm{F}\left(U\right)$ of the Fock space $\mathrm{F}\left(\mathcal{H}\right)$, that is defined by letting $\mathrm{F}\left(U\right)$ acts on $\mathcal{H}^{\odot n}$ as the $n^{\mathrm{th}}$ tensor product $U^{\otimes n}$. Thus, for every $T\in\mathrm{Aut}\left(X,\mathcal{B},\mu\right)$, by looking at the Koopman operator $U_{T}$ we obtain the operator $\mathrm{F}\left(U_{T}\right)$ of $\mathrm{F}\left(L_{\mathbb{R}}^{2}\left(X,\mathcal{B},\mu\right)\right)\cong L_{\mathbb{R}}^{2}\left(\Omega,\mathcal{F},\mathbb{P}\right)$. This operator can also be described without reference to the Fock space structure: if $S\in\mathrm{Aut}\left(\Omega,\mathcal{F},\mathbb{P}\right)$ is a Poissonian transformation with base transformation $T$ (for instance, $S=T^{\ast}$ as in Proposition \ref{Proposition: classical Poisson point process}), we obtain the Koopman operator $U_{S}$ of $L_{\mathbb{R}}^{2}\left(\Omega,\mathcal{F},\mathbb{P}\right)$. The equivariance property that relates $T$ and $S$ as in Definition \ref{Definition: base action} implies that
$$\mathrm{F}\left(U_{T}\right)=U_{S}.$$

\subsubsection{Infinitely Divisible distributions and the First Chaos}

We start by recalling some of the basics of infinitely divisible distributions. A general reference to the subject is \cite{sato1999levy}.

A distribution is {\bf infinitely divisible} if for every positive integer $n$ it can be presented as the $n^{\mathrm{th}}$-power convolution of some other distribution. By the fundamental L\'{e}vy--Khintchine Representation (see \cite[Chapter 2, $\mathsection$8]{sato1999levy}), every infinitely divisible distribution is completely determined by a triplet
$$\left(\sigma,\ell,b\right),\text{ where }\sigma\geq0,\,b\in\mathbb{R}\text{ and }\ell\text{ is a {\bf L\'{e}vy measure}}.$$
By definition, a L\'{e}vy measure is a σ-finite Borel (possibly with atoms) measure $\ell$ on $\mathbb{R}$ with
$$\ell\left(\left\{0\right\}\right)=0\text{ and }\int_{\mathbb{R}}t^{2}\wedge 1 d\ell\left(t\right)<\infty.$$
According to the L\'{e}vy--Khintchine Representation, a random variable $W$ has infinitely divisible distribution associated with a triplet $\left(\sigma,\ell,b\right)$ if its characteristic function takes the form
$$\mathbb{E}\left(\exp\left(itW\right)\right)=\exp\big(-t^{2}\sigma^{2}/2+\int_{\mathbb{R}}\left(e^{itx}-1-itx\cdot1_{\left[-1,+1\right]}\left(x\right)\right)d\ell\left(x\right)+itb\big).$$
Thus, every infinitely divisible distribution is the convolution of a centred Gaussian distribution with variance $\sigma^{2}$, and another infinitely divisible distribution that is determined by a L\'{e}vy measure $\ell$ and a constant $b$ via the L\'{e}vy--Khintchine Representation. An infinitely divisible distribution for which the Gaussian part vanishes, namely $\sigma=0$, is referred to as {\bf infinitely divisible Poissonian} distribution, henceforth {\bf {\sc id}p}. The fundamental examples of {\sc id}p distributions are Poisson distributions and, more generally, compound Poisson distributions. We refer to a random variable as {\sc id}p if its distribution is {\sc id}p.

Given a generative Poisson point process $\mathcal{P}$, the restriction of the Poisson stochastic integral in the aforementioned chaos decomposition to the first chaos $H_{1}\left(\mathcal{P}\right)$, forms an isometric isomorphism of the Hilbert spaces $$I_{\mu}:L_{\mathbb{R}}^{2}\left(X,\mathcal{B},\mu\right)\xrightarrow{\sim}H_{1}\left(\mathcal{P}\right).$$
We introduce some useful properties of this stochastic integral.

\begin{prop}
\label{Proposition: First chaos}
In the above setting the following hold.
\begin{enumerate}
    \item For every $f\in L_{\mathbb{R}}^{1}\left(X,\mathcal{B},\mu\right)\cap L_{\mathbb{R}}^{2}\left(X,\mathcal{B},\mu\right)$,
    $$I_{\mu}\left(f\right)=\int_{X}fd\mathcal{P}-\int_{X}fd\mu.$$
    \item For every $f\in L_{\mathbb{R}}^{2}\left(X,\mathcal{B},\mu\right)$, $I_{\mu}\left(f\right)$ is an {\sc id}p random variable whose L\'{e}vy measure is given by
    $$\ell_{f}:=\mu\mid_{\left\{f\neq 0\right\}}\circ f^{-1}.$$
    \item For every $f\in L_{\mathbb{R}}^{2}\left(X,\mathcal{B},\mu\right)$ for which $I_{\mu}\left(f\right)$ is bounded from below,
    $$\ell_{f}\left(\mathbb{R}_{<0}\right)=0\text{ and }\int_{\mathbb{R}\geq0}td\ell_{f}\left(t\right)<\infty.$$    
\end{enumerate}
\end{prop}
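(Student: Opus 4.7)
The plan is to prove the three statements in sequence, using the $L^2$-isometry of $I_\mu$ together with standard facts about Poisson stochastic integrals and infinitely divisible distributions. For (1), I would start from indicators $f = 1_A$, $A \in \mathcal{B}_\mu$, where the identity is the defining equation $I_\mu(1_A) = P_A - \mu(A)$; linearity extends this to simple $f \in L^1 \cap L^2$. For a general $f \in L^1 \cap L^2$, approximate by simple $f_n \to f$ in both $L^1$ and $L^2$. The $L^2$-isometry of $I_\mu$ gives $I_\mu(f_n) \to I_\mu(f)$ in $L^2$, while the Campbell-type inequality $\mathbb{E}\bigl|\int (f_n - f) d\mathcal{P}\bigr| \leq \int |f_n - f| d\mu$, which follows from $\mathcal{P}$ being a random measure with intensity $\mu$ (cf.\ Proposition \ref{Proposition: Poisson point process}), yields $\int f_n d\mathcal{P} \to \int f d\mathcal{P}$ in $L^1$; the deterministic part converges trivially, and extracting a common subsequence gives the identity almost surely.

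For (2), I would compute the characteristic function of $I_\mu(f)$ explicitly for a simple $f = \sum_{i=1}^k c_i 1_{A_i}$ with disjoint $A_i \in \mathcal{B}_\mu$. Poisson independence combined with $\mathbb{E} e^{itc_i P_{A_i}} = \exp(\mu(A_i)(e^{itc_i} - 1))$ gives
$$\mathbb{E}\exp(it I_\mu(f)) = \exp\Bigl(\int_X (e^{itf} - 1 - itf) d\mu\Bigr),$$
which, since $\int f \cdot 1_{\{|f|>1\}} d\mu$ is finite for simple $L^1$-functions, rewrites in the L\'evy--Khintchine form with $\sigma = 0$, L\'evy measure $\ell_f = \mu|_{\{f \neq 0\}} \circ f^{-1}$, and drift $b = -\int f \cdot 1_{\{|f|>1\}} d\mu$. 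For general $f \in L^2$, I would approximate by the truncations $f_n := f \cdot 1_{\{1/n \leq |f| \leq n\}} \in L^1 \cap L^2$ (noting $\mu(|f| \geq 1/n) \leq n^2 \|f\|_2^2 < \infty$), which converge to $f$ in $L^2$. The isometry gives $I_\mu(f_n) \to I_\mu(f)$ in $L^2$, hence in distribution; the L\'evy measures $\ell_{f_n}$ converge vaguely on $\mathbb{R}\setminus\{0\}$ to $\ell_f$, and the continuity theorem for infinitely divisible distributions (\cite[Theorem 8.7]{sato1999levy}) identifies the limit as the {\sc id}p distribution with L\'evy measure $\ell_f$.

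For (3), I would apply the standard half-line characterization of {\sc id}p distributions (\cite[Theorem 24.11 and Corollary 24.8]{sato1999levy}): an {\sc id}p random variable is bounded below if and only if its L\'evy measure is concentrated on $\mathbb{R}_{\geq 0}$ with $\int_{(0,1]} t\, d\ell(t) < \infty$, and is integrable if and only if $\int_{\{|t|>1\}} |t|\, d\ell(t) < \infty$. Combined with $I_\mu(f) \in L^2 \subset L^1$, this immediately yields the two claimed properties of $\ell_f$. The main obstacle is the limiting argument in (2): the $L^2$-convergence delivers convergence in distribution, but to conclude that the limiting L\'evy measure is exactly $\ell_f$ requires careful control of the truncation errors in the drift term, which we handle via the finiteness $\mu(\{|f| > \varepsilon\}) < \infty$ for every $\varepsilon > 0$ guaranteed by $f \in L^2$.
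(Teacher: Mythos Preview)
Your proposal is correct and follows the same overall strategy as the paper: parts (1) and (2) are treated as classical facts established by approximation from simple functions (the paper simply cites \cite[Lemma 20.6]{sato1999levy} and \cite[Proposition 2.10]{rosinki2018representations} rather than spelling out the limiting argument you sketch), and part (3) rests on Sato's characterization of {\sc id}p distributions bounded from below. The one tactical difference is in how you obtain $\int_{\mathbb{R}_{\geq 0}} t\,d\ell_f(t)<\infty$: the paper invokes \cite[Theorem 24.7]{sato1999levy}, splits into the type~A/type~B dichotomy, and bounds $\int_{\{f>1\}} f\,d\mu$ directly using $f\in L^2(\mu)$; you instead use that $I_\mu(f)\in L^2(\mathbb{P})\subset L^1(\mathbb{P})$ and appeal to the moment criterion for infinitely divisible laws to get $\int_{\{t>1\}} t\,d\ell_f(t)<\infty$ in one stroke. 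Your route is slightly cleaner and avoids the case split, while the paper's route keeps the argument entirely on the base space.
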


\begin{proof}
The stochastic integral $I_{\mu}$ is generally defined on the dense subspace $L_{\mathbb{R}}^{1}\left(X,\mathcal{B},\mu\right)\cap L_{\mathbb{R}}^{2}\left(X,\mathcal{B},\mu\right)$ as in part (2), and extends to $L_{\mathbb{R}}^{2}\left(X,\mathcal{B},\mu\right)$ by continuity. It is a classical fact that if $f\in L_{\mathbb{R}}^{2}\left(X,\mathcal{B},\mu\right)$ then $I_{\mu}\left(f\right)$ is an {\sc id}p random variable and its L\'{e}vy measure is $\ell_{f}$ as in part (1) (see e.g. \cite[Lemma 20.6]{sato1999levy}, \cite[Proposition 2.10]{rosinki2018representations} and note that on the dense subspace $L_{\mathbb{R}}^{1}\left(X,\mathcal{B},\mu\right)\cap L_{\mathbb{R}}^{2}\left(X,\mathcal{B},\mu\right)$ the stochastic integral $I_{\mu}$ differs from the stochastic integrals in these references only by a constant, whence they have the same L\'{e}vy measure). This establishes parts (1) and (2). 

In order to establish part (3) we exploit the general characterization of {\sc id}p random variables that are bounded from below as in \cite[Theorem 24.7]{sato1999levy}, by which if $I_{\mu}\left(f\right)$ is bounded from below then its L\'{e}vy measure $\ell_{f}$ satisfies the following two properties. First, $\ell_{f}\left(\mathbb{R}_{<0}\right)=0$ as in the first property in part (3). Second, one of the following alternatives occurs ({\it type A} or {\it type B} in the terminology of \cite[Definition 11.9]{sato1999levy}):
\begin{enumerate}
    \item $\ell\left(\mathbb{R}_{\geq 0}\right)<\infty$, in which case by the Cauchy-Schwartz inequality 
    $$\int_{\mathbb{R}_{\geq0}}td\ell_{f}\left(t\right)=\int_{\left\{ f> 0\right\} }fd\mu\leq\ell_{f}\left(\mathbb{R}_{\geq0}\right)\int_{\left\{ f>0\right\} }f^{2}d\mu<\infty.$$
    \item $\ell_{f}\left(\mathbb{R}_{\geq 0}\right)=\infty$ and $\int_{\left\{0\leq t\leq 1\right\}}td\ell_{f}\left(t\right)<\infty$, in which case we have
    $$\int_{\mathbb{R}_{\geq0}}td\ell_{f}\left(t\right)=\int_{\left\{ 0\leq t\leq1\right\} }td\ell_{f}\left(t\right)+\int_{\left\{ t>1\right\} }td\ell_{f}\left(t\right)<\infty,$$
    where the finiteness of the second term follows from
    $$\int_{\left\{ t>1\right\} }td\ell_{f}\left(t\right)=\int_{\left\{ f>1\right\} }fd\mu\leq\int_{\left\{ f>1\right\} }f^{2}d\mu<\infty.$$
\end{enumerate}
This completes the proof of part (3).
\end{proof}

\section{Ergodicity of Maharam Extensions}
\label{Appendix: Maharam}

A useful criteria for the ergodicity of the Maharam Extension is based on Krieger's theory of orbit equivalence classification of nonsingular transformations (i.e. nonsingular actions of $\mathbb{Z}$). While this theory, by its nature, applies to countable amenable groups, by reviewing its details one may derive a general criterion to the ergodicity of Maharam Extensions for actions of general groups. In the following discussion, an action of a general group $G$ is a homomorphism from $G$ into $\mathrm{Aut}\left(X,\mathcal{B},\left[\mu\right]\right)$, and the measurability or continuity of this homomorphism are irrelevant.

Let $G$ be a group. Suppose we are given a nonsingular action of $G$ on a standard measure space $\left(X,\mathcal{B},\mu\right)$, that is a homomorphism
$$\mathbf{T}:G\to\mathrm{Aut}\left(X,\mathcal{B},\left[\mu\right]\right),\quad\mathbf{T}:\left(g,x\right)\mapsto T_{g}\left(x\right).$$
Denote its Radon-Nikodym cocycle by 
$$\nabla_{g}\left(\cdot\right)=\frac{d\mu\circ T_{g}}{d\mu}\left(\cdot\right)\in L^{1}\left(X,\mathcal{B},\mu\right),\quad g\in G.$$
Let $\eta$ be the measure on $\mathbb{R}$ that is defined by $d\eta\left(t\right)=e^{t}dt$, where $dt$ denotes the Lebesgue measure on $\mathbb{R}$, and put
$$\widetilde{X}=X\times\mathbb{R},\,\widetilde{\mathcal{B}}=\mathcal{B}\otimes\mathcal{B}\left(\mathbb{R}\right)\text{ and }\widetilde{\mu}=\mu\otimes\eta.$$
The Maharam Extension of $\mathbf{T}$ is the infinite measure preserving action
$$\widetilde{\mathbf{T}}:G\to\mathrm{Aut}\left(\widetilde{X},\widetilde{\mathcal{B}},\widetilde{\mu}\right),\quad\widetilde{T}_{g}:\left(x,t\right)\mapsto\left(T_{g}\left(x\right),t-\log\nabla_{g}\left(x\right)\right).$$

A number $s\in\mathbb{R}$ is said to be an {\bf essential value} of $\mathbf{T}$ if:
\begin{quote}
    Given any $A\in\mathcal{B}$ with $\mu\left(A\right)>0$ and any $\epsilon>0$, there exist $A\supset B\in\mathcal{B}$ with $\mu\left(B\right)>0$ and some $g\in G$, such that $T_{g}\left(B\right)\subset A$ and $\log\nabla_{g}\left(B\right)\subset\left(s-\epsilon,s+\epsilon\right)$.
\end{quote}
The set of all essential values of $\mathbf{T}$ is called the {\bf ratio set} and is denoted by
$$\mathrm{r}\left(\mathbf{T},\mu\right),\text{ or also }\mathrm{r}\left(\mathbf{T}\right)\text{ when }\mu\text{ is understood}.$$
The ratio set is a closed subgroup of $\mathbb{R}$ depending only on the measure class of $\mu$, and as such it is a principle invariant in Krieger's theory \cite[$\mathsection$ 3]{schmidt1976}.

\begin{prop}[Schmidt]
\label{Proposition: Schmidt}
Suppose $\mathbf{T}$ is an ergodic nonsingular action. If $\mathrm{r}\left(\mathbf{T}\right)=\mathbb{R}$ then the Maharam Extension $\widetilde{\mathbf{T}}$ is ergodic.
\end{prop}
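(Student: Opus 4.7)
The plan is to show that every $\widetilde{\mathbf{T}}$-invariant measurable set $\widetilde{A}\subset\widetilde{X}$ of positive $\widetilde{\mu}$-measure is $\widetilde{\mu}$-conull, by reducing the problem to a translation-invariance statement along the $\mathbb{R}$-coordinate. Define the vertical flow $\phi_s(x,t):=(x,t+s)$; a direct computation using the definition of $\widetilde{T}_g$ shows that $\phi_s$ commutes with every $\widetilde{T}_g$. Parametrizing $\widetilde{A}$ by its fibers $B(x):=\{t\in\mathbb{R}:(x,t)\in\widetilde{A}\}$, the $\widetilde{\mathbf{T}}$-invariance becomes the cocycle covariance $B(T_g(x))=B(x)-\log\nabla_g(x)$ modulo Lebesgue-null, for every $g\in G$ and $\mu$-a.e.\ $x$, while the condition $\phi_s(\widetilde{A})=\widetilde{A}$ modulo $\widetilde{\mu}$-null is equivalent to $B(x)+s=B(x)$ Lebesgue-a.e.\ for $\mu$-a.e.\ $x$. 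Let $P(x):=\{s\in\mathbb{R}:B(x)+s=B(x)\text{ Lebesgue-a.e.}\}$ denote the period group of $B(x)$, a closed subgroup of $\mathbb{R}$; the covariance yields $P(T_g(x))=P(x)$ for every $g\in G$ and $\mu$-a.e.\ $x$, and by the ergodicity of $\mathbf{T}$ the group $P(x)$ is then $\mu$-a.e.\ equal to a fixed closed subgroup $P\leq\mathbb{R}$.

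The crux of the proof is to establish the inclusion $\mathrm{r}(\mathbf{T})\subset P$. Fix $s\in\mathrm{r}(\mathbf{T})$ and suppose, toward a contradiction, that $s\notin P$, i.e.\ that $\widetilde{\mu}\bigl(\widetilde{A}\triangle\phi_s(\widetilde{A})\bigr)>0$. By Fubini and Lebesgue density in the vertical coordinate, I would select a bounded interval $I\subset\mathbb{R}$, a set $A\in\mathcal{B}$ with $\mu(A)>0$, and positive constants $\delta,\epsilon>0$ such that, uniformly for $x\in A$ and $s'\in(s-\epsilon,s+\epsilon)$, the symmetric difference $\bigl((B(x)+s')\triangle B(x)\bigr)\cap I$ has Lebesgue measure at least $\delta$. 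Now apply the definition of essential value to $A$ and $\epsilon$: there exist $g\in G$ and a subset $B'\subset A$ with $\mu(B')>0$, $T_g(B')\subset A$, and $\log\nabla_g(B')\subset(s-\epsilon,s+\epsilon)$. For each $x\in B'$ the covariance produces $B(T_g(x))=B(x)-s_x$ with $s_x:=\log\nabla_g(x)\in(s-\epsilon,s+\epsilon)$; since both $x$ and $T_g(x)$ lie in $A$, pulling the quantitative defect at the point $T_g(x)$ through this identity should collide with the defect at $x$ and produce a measure-theoretic contradiction, forcing $s\in P$.

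Consequently $\mathrm{r}(\mathbf{T})\subset P$, and the hypothesis $\mathrm{r}(\mathbf{T})=\mathbb{R}$ then forces $P=\mathbb{R}$: every fiber $B(x)$ is Lebesgue-a.e.\ equal to $\emptyset$ or to $\mathbb{R}$. The set $\{x\in X:B(x)=\mathbb{R}\text{ modulo null}\}$ is $\mathbf{T}$-invariant by the covariance and of positive $\mu$-measure because $\widetilde{\mu}(\widetilde{A})>0$; the ergodicity of $\mathbf{T}$ then makes it $\mu$-conull, whence $\widetilde{A}=\widetilde{X}$ modulo $\widetilde{\mu}$-null, as required. The principal obstacle is the quantitative step in the middle paragraph: essential values supply only an \emph{approximate} identity $\log\nabla_g\approx s$ on a positive-$\mu$-measure subset, while the conclusion requires an \emph{exact} translation-periodicity of the fibers. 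Bridging this gap---standard in Krieger-type arguments, but technical---requires a simultaneous use of Lebesgue density along the vertical coordinate and $\mu$-density along the horizontal coordinate, together with careful bookkeeping of how the window $\epsilon$, the defect scale $\delta$, and the length of $I$ interact in the final symmetric-difference estimate.
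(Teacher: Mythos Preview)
Your overall architecture coincides with the paper's: show that every $\widetilde{\mathbf{T}}$-invariant set is invariant under the vertical flow $\phi_s$, then conclude by Fubini and the ergodicity of $\mathbf{T}$. The paper, however, does not attempt the first step directly; it simply invokes \cite[Theorem~5.2]{schmidt1976} as a black box (with a footnote observing that Schmidt's argument goes through for arbitrary groups and for $\eta$ in place of Lebesgue measure). Your period-group framework is a reasonable way to organize that step, and the reduction ``$P(x)$ is $\mathbf{T}$-invariant, hence a.e.\ constant'' is correct.

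The gap is in the contradiction you set up to prove $\mathrm{r}(\mathbf{T})\subset P$. From the covariance $B(T_g(x))=B(x)-s_x$ with $s_x\in(s-\epsilon,s+\epsilon)$ and the membership $x,T_g(x)\in A$, nothing inconsistent follows. Since symmetric difference commutes with translation, the defect condition at $T_g(x)$ with parameter $s'$ rewrites as
\[
\lambda\bigl(((B(x)+s')\triangle B(x))\cap(I+s_x)\bigr)\geq\delta,
\]
which is merely the defect condition at $x$ evaluated on the shifted window $I+s_x$; it is a second \emph{lower} bound on the same kind of quantity, not an upper bound that could clash with the first. In other words, a single application of the essential-value property relates fibers at \emph{different} base points by an approximate shift, and translates of a non-$s$-periodic set are still non-$s$-periodic, so both endpoints sitting in $A$ is perfectly self-consistent. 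Schmidt's actual argument does not try to derive a contradiction from one return; it compares masses of the form $\widetilde{\mu}\bigl(\widetilde{A}\cap(C\times J)\bigr)$ against $\widetilde{\mu}\bigl(\phi_s(\widetilde{A})\cap(C\times J)\bigr)$ and uses the essential-value property together with an exhaustion/approximation to squeeze the symmetric difference to zero. You correctly identify this step as the principal obstacle, but the specific mechanism you propose does not fire; either cite Schmidt as the paper does, or replace the contradiction with an argument that produces an \emph{upper} bound on the defect.
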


\begin{proof}
Let $A\in\widetilde{\mathcal{B}}$ be a $\widetilde{\mathbf{T}}$-invariant set. Denote by $\left(S_{s}\right)_{s\in\mathbb{R}}$ the flow on $\widetilde{X}$ given by $S_{s}\left(x,t\right)=\left(x,t+s\right)$. Since $\mathrm{r}\left(\mathbf{T}\right)=\mathbb{R}$, by \cite[Theorem 5.2]{schmidt1976}\footnote{Schmidt's theorem is formulated when $G$ is countable and $\eta$ is the Lebesgue measure. The proof of the part that is being used here remains valid for every group. Also, since $\eta$ is mutually absolutely continuous with the Lebesgue measure, the choice between those measures does not affect the ergodicity of the Maharam extension.} it follows that
$$\widetilde{\mu}\left(A\triangle S_{s}\left(A\right)\right)=0\text{ for every }s\in\mathbb{R},$$
so that $A$ is $\left(S_{s}\right)_{s\in\mathbb{R}}$-invariant. By the Fubini Theorem there are $A_{1}\in\mathcal{B}$ and $A_{2}\in\mathcal{B}\left(\mathbb{R}\right)$ such that $\widetilde{\mu}\left(A\triangle\left(A_1\times A_{2}\right)\right)=0$ and $A_{2}$ is invariant to all the translations of $\mathbb{R}$, hence either $\eta\left(A_{2}\right)=0$ or $\eta\left(\mathbb{R}\backslash A_{2}\right)=0$. Since $A$ is $\widetilde{\mathbf{T}}$-invariant it follows that $A_{1}$ is $\mathbf{T}$-invariant, and from the ergodicity of $\mathbf{T}$ it follows that either $\mu\left(A_{1}\right)=0$ or $\mu\left(X\backslash A_{1}\right)=0$. Thus, either $\widetilde{\mu}\left(A\right)=0$ or $\widetilde{\mu}\left(\left(X\times\mathbb{R}\right)\backslash A\right)=0$, completing the proof.
\end{proof}

\begin{prop}
Let $M$ be a paracompact smooth manifold with a volume form $\mathrm{Vol}$, let $1\leq r\leq\infty$, and denote by $\mathbf{D}:\mathrm{Diff}^{r}\left(M\right)\to\mathrm{Aut}\left(M,\left[\mathrm{Vol}\right]\right)$ the tautological nonsingular action. Then the Maharam Extension $\widetilde{\mathbf{D}}:\mathrm{Diff}^{r}\left(M\right)\to\mathrm{Aut}\big(\widetilde{M},\widetilde{\mathrm{Vol}}\big)$ is ergodic.
\end{prop}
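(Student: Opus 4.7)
The plan is to invoke the preceding proposition of Schmidt, which reduces the ergodicity of $\widetilde{\mathbf{D}}$ to verifying two claims about the base nonsingular action $\mathbf{D}$: (i) that $\mathbf{D}$ is ergodic, and (ii) that the ratio set satisfies $\mathrm{r}(\mathbf{D})=\mathbb{R}$. I would establish both by exploiting the abundance of compactly supported diffeomorphisms of $M$ that one may build inside local coordinate charts.

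For (i), given a $\mathbf{D}$-invariant set $A\in\mathcal{B}$ with $\mathrm{Vol}(A)>0$, the Lebesgue density theorem (in any chart) provides a $\mathrm{Vol}$-density point $x_{0}\in A$. For an arbitrary $y\in M$, the standard transitivity of $\mathrm{Diff}^{r}_{o}(M)$ on the connected manifold $M$ yields some $f\in\mathrm{Diff}^{r}(M)$ with $f(x_{0})=y$. Since $f$ is a $C^{1}$-diffeomorphism and $\mathrm{Vol}$ is transformed by $f$ via a smooth positive Jacobian, $f$ carries $\mathrm{Vol}$-density points of $A$ to density points of $f(A)$; combining this with the invariance $f(A)=A$ modulo $\mathrm{Vol}$-null yields that $y$ is itself a density point of $A$. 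As $y\in M$ was arbitrary, this forces $\mathrm{Vol}(M\setminus A)=0$.

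For (ii), fix $s\in\mathbb{R}$, $\epsilon>0$, and $A\in\mathcal{B}$ with $\mathrm{Vol}(A)>0$, and pick a $\mathrm{Vol}$-density point $x_{0}$ of $A$. Work in a chart at $x_{0}$ in which $x_{0}$ corresponds to the origin and $\mathrm{Vol}=\rho\,d\mathrm{Leb}$ for a smooth positive density $\rho$. Let $\chi:[0,\infty)\to[0,1]$ be a smooth cutoff function that equals $1$ on $[0,r_{0}]$ and vanishes outside $[0,R_{0}]$, with $R_{0}$ less than the chart radius, and consider the radial vector field $X(x):=\tfrac{s}{d}\chi(|x|)x$. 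Its time-$1$ flow $\phi=\Phi_{1}^{X}$ is a compactly supported $C^{r}$-diffeomorphism of the chart that coincides with the dilation $x\mapsto e^{s/d}x$ on $B(0,r_{0})$, and extending by the identity off the chart produces $f\in\mathrm{Diff}^{r}(M)$ fixing $x_{0}$. On $B(0,r_{0})$ one computes $\nabla_{f}(x)=e^{s}\cdot\rho(e^{s/d}x)/\rho(x)$, which converges to $e^{s}$ uniformly as one restricts to smaller balls, by continuity and positivity of $\rho$; so choosing some $r\leq r_{0}$ small enough guarantees $\log\nabla_{f}\in(s-\epsilon,s+\epsilon)$ throughout $B(0,r)$. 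Setting $B:=A\cap B(0,r)\cap f^{-1}(A)$, the density-point property applied at $x_{0}$ (which is fixed by $f$) ensures $\mathrm{Vol}(B)>0$ for $r$ sufficiently small, while by construction $B\subset A$ and $f(B)\subset A$. This exhibits $s$ as an essential value, establishing $\mathrm{r}(\mathbf{D})=\mathbb{R}$.

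The step requiring the most care is the ratio-set computation, where one must simultaneously control the Jacobian to lie in a prescribed narrow interval and keep the witness set $B$ of positive $\mathrm{Vol}$-measure; both demands are met by shrinking $r$ and using the smoothness of $\rho$ together with the density hypothesis at $x_{0}$, so the argument remains essentially routine. With (i) and (ii) in hand, Proposition \ref{Proposition: Schmidt} delivers the ergodicity of $\widetilde{\mathbf{D}}$.
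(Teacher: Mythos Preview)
Your proposal is correct and follows the same overall strategy as the paper: reduce via Schmidt's criterion to showing that $\mathbf{D}$ is ergodic and that $\mathrm{r}(\mathbf{D})=\mathbb{R}$, and verify both using compactly supported diffeomorphisms built in charts as local dilations.

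The implementations differ in a couple of places worth noting. For ergodicity, the paper simply observes that $\mathrm{Diff}^{r}(M)$ acts transitively on $M$, so any invariant $L^{1}$-function is constant; your density-point argument reaches the same conclusion but is more elaborate than necessary. For the ratio set, the paper first passes from $\mathrm{Vol}$ to Lebesgue measure in a chart (using that the ratio set depends only on the measure class), and then verifies the essential-value condition on the generating collection of open cubes, invoking a standard approximation principle to upgrade this to all Borel sets. You instead work directly with the original volume form $\rho\,d\mathrm{Leb}$ and handle an arbitrary positive-measure set $A$ via a density point fixed by the dilation; this avoids both the change-of-measure step and the generating-collection approximation, at the cost of having to control the extra factor $\rho(e^{s/d}x)/\rho(x)$ in the Jacobian. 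Both routes are routine; yours is arguably more self-contained, while the paper's is slightly cleaner once the reduction to Lebesgue measure is made.
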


\begin{rem}
While this proposition requires a proof which turns to be somehow technical, it should be regarded as easy. In fact, in common cases much more is known: when $M$ is compact and the dimension is either $d=1$ (Katznelson) or $d\geq 3$ (Herman), there exists a single diffeomorphism whose Maharam Extension is ergodic. See the introduction of \cite[$\mathsection$9]{hawkins2021ergodic} and the references therein.
\end{rem}

\begin{proof}
Note that $\mathbf{D}$ acts transitively on $M$ (for $d=1$ it is trivial, and for $d\geq2$ see \cite[Lemma 2.1.10]{banyaga2013structure}). Then every function $\phi\in L^{1}\left(M,\mathrm{Vol}\right)$ that satisfies $\phi\circ f=\phi$ for every $f\in\mathrm{Diff}^{r}\left(M\right)$ is constant, thus $\mathbf{D}$ is ergodic. Then applying Proposition \ref{Proposition: Schmidt}, it suffices to show that $\mathrm{r}\left(\mathbf{D},\mathrm{Vol}\right)=\mathbb{R}$. Since the defining property of essential values is local in nature, it is enough to verify that on every neighborhood in $M$, every number is an essential value by diffeomorphisms that are supported on this neighborhood. Let $\varphi:U\to\varphi\left(U\right)\subset\mathbb{R}^{d}$ be any local chart for some relatively compact neighborhood $U\subset M$, and denote by $\mathrm{Diff}^{r}\left(U\right)$ those diffeomorphisms $f\in\mathrm{Diff}^{r}\left(M\right)$ for which $\mathrm{Supp}\left(f\right)\subset U$. Denote by $\varphi_{\ast}\mathrm{Vol}$ the pushforward of $\mathrm{Vol}$ along $\varphi$ from $U$ to $\varphi\left(U\right)$. For every $f\in\mathrm{Diff}^{r}\left(\varphi\left(U\right)\right)$, letting $f^{\varphi}:=\varphi^{-1}\circ f\circ\varphi\in\mathrm{Diff}^{r}\left(U\right)$ we have the relation
$$\frac{d\varphi_{\ast}\mathrm{Vol}\circ f}{d\varphi_{\ast}\mathrm{Vol}}\circ\varphi=\frac{d\mathrm{Vol}\circ f^{\varphi}}{d\mathrm{Vol}}.$$
Since $\varphi_{\ast}\mathrm{Vol}$ is mutually absolutely continuous with the Lebesgue measure on $\varphi\left(U\right)$, which we will abbreviate generally by $\lambda$, it follows that 
$$\mathrm{r}\left(\mathrm{Diff}^{r}\left(U\right),\mathrm{Vol}\right)=\mathrm{r}\left(\mathrm{Diff}^{r}\left(\varphi\left(U\right)\right),\varphi_{\ast}\mathrm{Vol}\right)=\mathrm{r}\left(\mathrm{Diff}^{r}\left(\varphi\left(U\right)\right),\lambda\right),$$
where the first ratio set is the one of the tautological action of the group $\mathrm{Diff}^{r}\left(U\right)\subset \mathrm{Diff}^{r}\left(M\right)$ on $U$, and the latter ratio set is the one of the tautological action of the group $\mathrm{Diff}^{r}\left(\varphi\left(U\right)\right)\subset \mathrm{Diff}^{r}\left(\mathbb{R}^{d}\right)$ on $\varphi\left(U\right)$. Thus, it suffices to show that $\mathrm{r}\left(\mathrm{Diff}^{r}\left(\Omega\right),\lambda\right)=\mathbb{R}$ for every open domain $\Omega\subset\mathbb{R}^{d}$ equipped with the Lebesgue measure $\lambda$ and the nonsingular tautological action of the group $\mathrm{Diff}^{r}\left(\Omega\right)$.

Let $\Omega\subset\mathbb{R}^{d}$ be an open domain and let $s\in\mathbb{R}$ be arbitrary. It is typically hard to compute the defining condition of essential value for a general Borel set, so it is a common practice to compute it for sets in a certain generating collection and then, using approximation, show that they are actually essential values (see e.g. \cite[Fact 2.7]{danilenko2023krieger}). In our case of the Euclidean space $\mathbb{R}^{d}$ we work with the collection of open cubes. Thus, in order to show that $s\in\mathrm{r}\left(\mathrm{Diff}^{r}\left(\Omega\right),\lambda\right)$ it suffices to verify the following property.
\begin{quote}
    For every open cube $C\subset\Omega$ and every $\epsilon>0$, there is a Borel set $C_{0}\subset C$ and a diffeomorphism $f_{0}\in\mathrm{Diff}^{r}\left(\Omega\right)$, such that
    $$\lambda\left(C_{0}\right)\geq e^{-\left|s\right|}\lambda\left(C\right),\,f_{0}\left(C_{0}\right)\subset C\text{ and }\log\nabla_{f_{0}}\left(C_{0}\right)\subset\left(s-\epsilon,s+\epsilon\right).$$
\end{quote}
(The cost of considering only cubes is that not only $C_{0}$ is of positive measure, but further the ratio of the volume of $C_{0}$ and of $C$ stays away from zero). Indeed, given an open cube $C=C_{p}\left(u_{0}\right)$ centred at $u_{0}\in\Omega$ with side length $p>0$ (the $\epsilon$ is irrelevant as we will see), consider the open sub-cube 
$$C_{0}=C_{p/\left(e^{\left|s\right|/d}\right)}\left(u_{0}\right)$$
and let $f_{0}\in\mathrm{Diff}^{r}\left(\Omega\right)$ be any diffeomorphism such that 
$$f_{0}\left(u\right)=e^{s/d}\left(u-u_{0}\right)+u_{0}\text{ for every }u\in C_{0}.$$
It is then evident that
$$\lambda\left(C_{0}\right)=\big(p/e^{\left|s\right|/d}\big)^{d}=e^{-\left|s\right|}\lambda\left(C\right),\,f_{0}\left(C_{0}\right)\subset C\text{ and }\log\nabla_{f_{0}}\mid_{C_{0}}\equiv s.\qedhere$$
\end{proof}

\end{appendix}

\bibliographystyle{acm}
\bibliography{References}

\nocite{*}

\end{document}